\providecommand{\U}[1]{\protect \rule{.1in}{.1in}}
\newtheorem{theorem}{Theorem}[section]
\newtheorem{corollary}[theorem]{Corollary}
\newtheorem{definition}[theorem]{Definition}
\newtheorem{example}[theorem]{Example}
\newtheorem{lemma}[theorem]{Lemma}
\newtheorem{proposition}[theorem]{Proposition}
\newtheorem{remark}[theorem]{Remark}
\newenvironment{proof}[1][Proof]{\noindent \textbf{#1.} }{\  \rule{0.5em}{0.5em}}
\begin{document}

\title{Stopping Times and Related It\^{o}'s Calculus with $G$-Brownian Motion}
\author{Xinpeng LI and Shige PENG \thanks{Partially supported by the National Basic
Research Program of China (973 Program) grant No. 2007CB814900 (Financial
Risk).}\\School of Mathematics, Shandong University \\250100, Jinan, China }
\date{October 17, 2009}
\maketitle

\begin{quotation}
\textbf{Abstract. }Under the framework of $G$-expectation and $G$-Brownian
motion, We have introduced a It\^{o}'s integral for stochastic processes
without the condition of quasi-continuous. We then can obtain It\^{o}'s
integral on stopping time interval. This formulation help us to obtain
It\^{o}'s formula for a general $C^{1,2}$-function, which generalizes the
previous results of Peng \cite{Peng2006a,Peng2006b,Peng2007} and it's improved
version of Gao \cite{Gao2009}.
\end{quotation}

\section{Introduction}

$G$-Brownian motion is a continuous process $(B_{t})_{t\geq0}$ defined on a
sublinear expectation space $(\Omega,\mathcal{H},\mathbb{\hat{E}})$ (see
Definition \ref{Def-1}) with stable and independent increments. It was proved
that each increment $X=B_{t+s}-B_{t}$ of $B$ is $G$-normal distributed,
namely
\[
aX+b\bar{X}\overset{d}{=}\sqrt{a^{2}+b^{2}}X\text{,\  \ for }a,b\geq0,
\]
where $\bar{X}$ is an independent copy of $X$. A new type of stochastic
integral and the related It\^{o}'s calculus has been introduced in
\cite{Peng2006a,Peng2006b,Peng2007}. For example, if $\varphi$ is a $C^{2}%
$-function such that $\varphi_{xx}(x)$ satisfies polynomial growth function,
then we have
\begin{equation}
\varphi(B_{t})-\varphi(B_{t_{0}})=\int_{t_{0}}^{t}\varphi_{x}(B_{s}%
)dB_{s}+\frac{1}{2}\int_{0}^{t}\varphi_{xx}(B_{s})d\left \langle B\right \rangle
_{s}.\label{eq0}%
\end{equation}
A interesting problem is how to extend the above formulation to the situation
where $\varphi$ is simply a $C^{2}$-function. The main obstacle to treat this
situation is that the notion of stopping times and the related properties have
not yet been well-understood and studied within the framework of
$G$-expectation and $G$-Brownian motions. A difficulty hidden behind is that
uttill now the theory is mainly based on the space of random variables
$X=X(\omega)$ which are quasi-continuous with respect to the natural Choquet
capacity $\hat{c}(A):=\mathbb{\hat{E}}[\mathbf{I}_{A}]$, $A\in \mathcal{B}%
(\Omega)$. It is not yet clear that the martingale properties still hold for
random variables without quasi-continuous condition. On the other hand,
stopping times are closely related to random variables without
quasi-continuous properties. Recently Gao \cite{Gao2009} has improved the
It\^{o}'s formula of Peng. But the problem of (\ref{eq0}) for $C^{2}$-function
is still open.

In this paper we will face this difficulty by introducing It\^{o}'s stochastic
integrals $\int_{0}^{t}\eta_{s}dB_{s}$ where, for each $t$, the integrand
$\eta_{t}$ needs not to be a quasi-continuous random variable. Within this
framework we can treat a fundamentally important It\^{o}'s integral $\int
_{0}^{t\wedge \tau}\eta_{s}dB_{s}$ for a stopping time $\tau$ and then obtain
some important properties for the related stochastic calculus. A very general
form of It\^{o}'s formula with respect to $G$-Brownian motion has been
obtained. In particular (\ref{eq0}) is proved to be true for $\varphi \in
C^{2}$. Many important and interesting problems still open under this new
framework, e.g., under what condition $\int_{0}^{\cdot}\eta_{s}dB$ is a
martingale or a local martingale?

This paper is organized as follows: In the next section we recall some basic
notions an results of $G$-Brownian motion under a $G$-expectation and the
related space of random variables. In Section 3 we introduce a new space
$M_{\ast}^{2}(0,T)$ of stochastic processes which are not necessarily
quasi-continuous and then define the related It\^{o}'s integral on this space.
In Section 4 we discuss It\^{o}'s integral defined on $[0,\tau]$ where $\tau$
is a stopping time. This allows us to have a It\^{o}'s integral for a space
larger that $M_{\ast}^{2}(0,T)$. Finally in Section 5, we prove the mentioned
general form of It\^{o}'s formula.

We believe that some notions and properties of this papper will become
important and basic tools in the further development of $G$-Brownian motion
and the corresponding nonlinear expectation analysis.   

\section{Basic settings}

We present some preliminaries in the theory of sublinear expectations and the
related $G$-Brownian motions. More details can be found in Peng
\cite{Peng2006a}, \cite{Peng2006b} and \cite{Peng2007}.

\begin{definition}
{\label{Def-1} { Let }}$\Omega$ be a given set and let $\mathcal{H}$ be a
linear space of real valued functions defined on $\Omega$ with $c\in
\mathcal{H}$ for all constants $c$, and $|X|\in \mathcal{H}$, if $X\in
\mathcal{H}$. $\mathcal{H}$ is considered as the space of our
\textquotedblleft random variables\textquotedblright. {{A \textbf{sublinear
expectation }$\mathbb{\hat{E}}$ on $\mathcal{H}$ is a functional
$\mathbb{\hat{E}}:\mathcal{H}\mapsto \mathbb{R}$ satisfying the following
properties: for all $X,Y\in \mathcal{H}$, we have\newline \  \  \newline%
\textbf{(a) Monotonicity:} \  \  \  \  \  \  \  \  \  \  \  \  \ If $X\geq Y$ then
$\mathbb{\hat{E}}[X]\geq \mathbb{\hat{E}}[Y].$\newline \textbf{(b) Constant
preserving: \  \ }\  \ $\mathbb{\hat{E}}[c]=c$.\newline \textbf{(c)}
\textbf{Sub-additivity: \  \  \  \ }}}\  \  \  \  \  \  \  \ $\mathbb{\hat{E}%
}[X]-\mathbb{\hat{E}}[Y]\leq \mathbb{\hat{E}}[X-Y].$\newline{{\textbf{(d)
Positive homogeneity: } \ $\mathbb{\hat{E}}[\lambda X]=\lambda \mathbb{\hat{E}%
}[X]$,$\  \  \forall \lambda \geq0$.\newline}}\newline The triple $(\Omega
,\mathcal{H},\mathbb{\hat{E}})$ is called a sublinear expectation space.
$X\in \mathcal{H}$ is called a random variable in $(\Omega,\mathcal{H})$. We
often call $Y=(Y_{1},\cdots,Y_{d})$, $Y_{i}\in \mathcal{H}$ a $d$-dimensional
random vector in $(\Omega,\mathcal{H})$. Let us consider a space of random
variables $\mathcal{H}$ satisfying: if $X_{i}\in \mathcal{H}$, $i=1,\cdots,d$,
then%
\[
\varphi(X_{1},\cdots,X_{d})\in \mathcal{H}\text{,\  \ for all }\varphi \in
C_{b,Lip}(\mathbb{R}^{d}),
\]
where $C_{b,Lip}(\mathbb{R}^{d})$ is the space of all bounded and Lipschitz
continuous functions on $\mathbb{R}^{d}$. An $m$-dimensional random vector
$X=(X_{1},\cdots,X_{m})$ is said to be independent from another $n$%
-dimensional random vector $Y=(Y_{1},\cdots,Y_{n})$ if
\[
\mathbb{\hat{E}}[\varphi(X,Y)]=\mathbb{\hat{E}}[\mathbb{\hat{E}}%
[\varphi(X,y)]_{y=Y}],\  \  \text{for }\varphi \in C_{b,Lip}(\mathbb{R}^{m}%
\times \mathbb{R}^{n}).
\]
Let $X_{1}$ and $X_{2}$ be two $n$--dimensional random vectors defined
respectively in {sublinear expectation spaces }$(\Omega_{1},\mathcal{H}%
_{1},\mathbb{\hat{E}}_{1})${ and }$(\Omega_{2},\mathcal{H}_{2},\mathbb{\hat
{E}}_{2})$. They are called identically distributed, denoted by $X_{1}\sim
X_{2}$, if
\[
\mathbb{\hat{E}}_{1}[\varphi(X_{1})]=\mathbb{\hat{E}}_{2}[\varphi
(X_{2})],\  \  \  \forall \varphi \in C_{b.Lip}(\mathbb{R}^{n}).
\]
If $X$, $\bar{X}$ are two $m$-dimensional random vectors in $(\Omega
,\mathcal{H},\mathbb{\hat{E}})$ and $\bar{X}$ is identically distributed with
$X$ and independent from $X$, then $\bar{X}$ is said to be an independent copy
of $X$.
\end{definition}

\begin{definition}
(\textbf{$G$-normal distribution}) \label{Def-Gnormal} A $d$-dimensional
random vector $X=(X_{1},\cdots,X_{d})$ in a sublinear expectation space
$(\Omega,\mathcal{H},\mathbb{\hat{E}})$ is called $G$-normal distributed if
for each $a\,$, $b\geq0$ we have
\begin{equation}
aX+b\bar{X}\sim \sqrt{a^{2}+b^{2}}X,\  \label{srt-a2b2}%
\end{equation}
where $\bar{X}$ is an independent copy of $X$. Here the letter $G$ denotes the
function
\[
G(A):=\frac{1}{2}\mathbb{\hat{E}}[(AX,X)]:\mathbb{S}_{d}\mapsto \mathbb{R}.
\]
It is also proved in Peng \cite{Peng2006b, Peng2007} that, for each
$\mathbf{a}\in \mathbb{R}^{d}$ and $p\in \lbrack1,\infty)$%
\[
\mathbb{\hat{E}}[|\left(  \mathbf{a},X\right)  |^{p}]=\frac{1}{\sqrt
{2\pi \sigma_{\mathbf{aa}^{T}}^{2}}}\int_{-\infty}^{\infty}|x|^{p}\exp \left(
\frac{-x^{2}}{2\sigma_{\mathbf{aa}^{T}}^{2}}\right)  dx,
\]
where $\sigma_{\mathbf{aa}^{T}}^{2}=2G(\mathbf{aa}^{T})$.
\end{definition}

\begin{definition}
A $d$-dimensional stochastic process $\xi_{t}(\omega)=(\xi_{t}^{1},\cdots
,\xi_{t}^{d})(\omega)$ defined in a sublinear expectation space $(\Omega
,\mathcal{H},\mathbb{\hat{E}})$ is a family of $d$-dimentional random vectors
$\xi_{t}$ parametrized by $t\in \lbrack0,\infty)$ such \ that $\xi_{t}^{i}%
\in \mathcal{H}$, for each $i=1,\cdots,d$ and $t\in \lbrack0,\infty)$. 
\end{definition}

The most typical stochastic process in a sublinear expectation space is the
so-called $G$-Brownian motion. 

\begin{definition}
\label{Def-3}(\textbf{\cite{Peng2006a} and \cite{Peng2007}}) Let
$G:\mathbb{S}_{d}\mapsto \mathbb{R}$ be a given monotonic and sublinear
function. A process $\{B_{t}(\omega)\}_{t\geq0}$ in a sublinear expectation
space $(\Omega,\mathcal{H},\mathbb{\hat{E}})$ is called a $G$%
\textbf{--Brownian motion} if for each $n\in \mathbb{N}$ and $0\leq
t_{1},\cdots,t_{n}<\infty$,$\ B_{t_{1}},\cdots,B_{t_{n}}\in \mathcal{H}$ and
the following properties are satisfied: \newline \textsl{(i)} $B_{0}(\omega
)=0$;\newline \textsl{(ii)} For each $t,s\geq0$, the increment $B_{t+s}-B_{t}$
is independent to $(B_{t_{1}},B_{t_{2}},\cdots,B_{t_{n}})$, for each
$n\in \mathbb{N}$ and $0\leq t_{1}\leq \cdots \leq t_{n}\leq t$; \newline%
\textsl{(iii)} $B_{t+s}-B_{t}\sim \sqrt{s}X$, for $s,t\geq0,$ where $X$ is
$G$-normal distributed.
\end{definition}


We denote:

$\Omega=C_{0}^{d}(\mathbb{R}^{+})$ the space of all $\mathbb{R}^{d}$-valued
continuous functions $(\omega_{t})_{t\in \mathbb{R}^{+}}$, with $\omega_{0}=0$,
equipped with the distance
\[
\rho(\omega^{1},\omega^{2}):=\sum_{i=1}^{\infty}2^{-i}[(\max_{t\in \lbrack
0,i]}|\omega_{t}^{1}-\omega_{t}^{2}|)\wedge1].
\]
$\mathcal{B}(\Omega)$ denotes the $\sigma$-algebra generated by all open sets.
Let $\Omega=C_{0}(\mathbb{R}^{+})$ be the space of all $\mathbb{R}$-valued
continuous paths $(\omega_{t})_{t\in \mathbb{R}^{+}}$ with $\omega_{0}=0$,
equipped with the distance%

\[
\rho(\omega^{1},\omega^{2}):=\sum_{i=1}^{\infty}2^{-i}[(\max_{t\in \lbrack
0,i]}|\omega_{t}^{1}-\omega_{t}^{2}|)\wedge1].
\]
We denote by $\mathcal{B}(\Omega)$ the Borel $\sigma$-algebra of $\Omega$ and
by $\mathcal{M}$ the collection of all probability measure on $(\Omega
,\mathcal{B}(\Omega))$.

We also denote, for each $t\in \lbrack0,\infty)$:

\begin{itemize}
\item $\Omega_{t}:=\{ \omega_{\cdot \wedge t}:\omega \in \Omega \}$,

\item $\mathcal{F}_{t}:=\mathcal{B}(\Omega_{t})$,

\item $L^{0}(\Omega)$: the space of all $\mathcal{B}(\Omega)$-measurable real functions,

\item $L^{0}(\Omega_{t})$: the space of all $\mathcal{B}(\Omega_{t}%
)$-measurable real functions,

\item $B_{b}(\Omega)$: all bounded elements in $L^{0}(\Omega)$, $B_{b}%
(\Omega_{t})):=B_{b}(\Omega)\cap L^{0}(\Omega_{t})$,

\item $C_{b}(\Omega)$: all continuous elements in $B_{b}(\Omega)$;
$C_{b}(\Omega_{t}):=B_{b}(\Omega)\cap L^{0}(\Omega_{t})$.
\end{itemize}

In \cite{Peng2006b, Peng2007}, a $G$-Brownian motion is constructed on a
sublinear expectation sapce $(\Omega,\mathbb{L}_{G}^{p}(\Omega),\mathbb{\hat
{E}})$ for $p\geq1$, with $\mathbb{L}_{G}^{p}(\Omega)$ such that
$\mathbb{L}_{G}^{p}(\Omega)$ is a Banach space under the natural norm
$\left \Vert X\right \Vert _{p}:=\mathbb{\hat{E}}[|X|^{p}]^{1/p}$. In this space
the corresponding canonical process $B_{t}(\omega)=\omega_{t}$, $t\in
\lbrack0,\infty)$, for $\omega \in \Omega$ is a $G$-Brownian motion.
Furthermore, it is proved in \cite{DHP} that $L^{0}(\Omega)\supset
\mathbb{L}_{G}^{p}(\Omega)\supset C_{b}(\Omega)$, and there exists a weakly
compact family $\mathcal{P}$ of probability measures defined on $(\Omega
,\mathcal{B}(\Omega))$ such that
\[
\mathbb{\hat{E}}[X]=\sup_{P\in \mathcal{P}}E_{P}[X],\  \text{for}\ X\in
C_{b}(\Omega).
\]
We introduce the natural choquet capacity%
\[
\hat{c}(A):=\sup_{P\in \mathcal{P}}P(A),\  \ A\in \mathcal{B}(\Omega).
\]
The space $\mathbb{L}_{G}^{2}(\Omega)$ was also intruduced independently in
\cite{Denis-M} in a quite different framework.

\begin{definition}
A set $A\subset \Omega$ is polar if $\hat{c}(A)=0$. A property holds
\textquotedblleft quasi-surely\textquotedblright \ (q.s.) if it holds outside a
polar set.
\end{definition}

$\mathbb{L}_{G}^{p}(\Omega)$ can be characterized as follows:
\[
\mathbb{L}_{G}^{p}(\Omega)=\{X\in \mathbb{L}^{0}(\Omega)|\sup_{P\in \mathcal{P}%
}E_{P}[|X|^{p}]<\infty \text{, and }X\text{ is }\hat{c}\text{-quasi surely
continuous}\}.\text{ }%
\]
We also denote, for $p>0$,

\begin{itemize}
\item $\mathcal{L}^{p}:=\{X\in L^{0}(\Omega):\mathbb{\hat{E}}[|X|^{p}%
]=\sup_{P\in \mathcal{P}}E_{P}[|X|^{p}]<\infty \}$;\ 

\item $\mathcal{N}^{p}:=\{X\in L^{0}(\Omega):\mathbb{\hat{E}}[|X|^{p}]=0\}$;

\item $\mathcal{N}:=\{X\in L^{0}(\Omega):X=0$, $\hat{c}$-quasi surely
(q.s.).$\}$It is seen that $\mathcal{L}^{p}$ and $\mathcal{N}^{p}$ are linear
spaces and $\mathcal{N}^{p}=\mathcal{N}$, for each $p>0$. We denote by
$\mathbb{L}^{p}:=\mathcal{L}^{p}/\mathcal{N}$. As usual, we do not make the
distinction between classes and their representatives.
\end{itemize}

Now, we give the following two propositions which can be found in \cite{DHP}.

\begin{proposition}
\label{pr8} For each $\{X_{n} \}_{n=1}^{\infty}$ in $C_{b}(\Omega)$ such that
$X_{n}\downarrow0$ \ on $\Omega$, we have $\mathbb{\hat{E}} [X_{n}%
]\downarrow0$.
\end{proposition}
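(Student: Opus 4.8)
The plan is to combine the robust representation $\mathbb{\hat{E}}[X]=\sup_{P\in\mathcal{P}}E_{P}[X]$, which is available for $X\in C_{b}(\Omega)$, with the weak compactness of the family $\mathcal{P}$. First I would record two easy reductions. Since $0\le X_{n+1}\le X_{n}\le X_{1}$ for all $n$ and $\mathbb{\hat{E}}$ is monotone and constant preserving, the numbers $\mathbb{\hat{E}}[X_{n}]$ form a nonincreasing sequence bounded below by $0$; hence they converge, and it suffices to prove that for every $\varepsilon>0$ there is an index $N$ with $\mathbb{\hat{E}}[X_{N}]\le C\varepsilon$, where $C$ does not depend on $\varepsilon$. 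Put $M:=\sup_{\omega\in\Omega}X_{1}(\omega)<\infty$, so that $0\le X_{n}\le M$ for all $n$.

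The crux is that $\Omega=C_{0}(\mathbb{R}^{+})$ is a Polish but non-(locally)-compact space, so Dini's theorem is not directly available on all of $\Omega$; the substitute is tightness. Because $\mathcal{P}$ is weakly compact, Prokhorov's theorem shows $\mathcal{P}$ is tight, so for the given $\varepsilon>0$ there is a compact set $K\subset\Omega$ with $\sup_{P\in\mathcal{P}}P(K^{c})\le\varepsilon$. On the compact metric space $K$ the sequence $(X_{n})$ of continuous functions decreases pointwise to the continuous function $0$, hence by Dini's theorem this convergence is uniform on $K$; choose $N$ so that $\sup_{\omega\in K}X_{N}(\omega)\le\varepsilon$.

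Putting these together, for every $P\in\mathcal{P}$ we get
\[
E_{P}[X_{N}]=E_{P}[X_{N}\mathbf{I}_{K}]+E_{P}[X_{N}\mathbf{I}_{K^{c}}]\le \varepsilon+M\,P(K^{c})\le (1+M)\varepsilon ,
\]
using $X_{N}\le \varepsilon$ on $K$ and $0\le X_{N}\le M$ everywhere. Taking the supremum over $P\in\mathcal{P}$ and invoking $\mathbb{\hat{E}}[X_{N}]=\sup_{P\in\mathcal{P}}E_{P}[X_{N}]$ gives $\mathbb{\hat{E}}[X_{N}]\le (1+M)\varepsilon$. Since $\varepsilon>0$ was arbitrary and $\mathbb{\hat{E}}[X_{n}]$ is nonincreasing, this yields $\mathbb{\hat{E}}[X_{n}]\downarrow 0$. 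The only genuine obstacle is the step from weak compactness to tightness and the need to localise to a compact set before applying Dini; the remaining points (measurability of $X_{N}\mathbf{I}_{K}$, which holds as $X_{N}$ is continuous and $K$ Borel; uniform boundedness; the fact that the representation is applied only to $C_{b}(\Omega)$-elements, exactly as hypothesised) are routine.
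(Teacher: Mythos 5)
Your proof is correct, and it is essentially the standard argument (tightness of the weakly compact family $\mathcal{P}$ via Prokhorov, Dini's theorem on the compact set, and the bound $E_{P}[X_{N}]\le\varepsilon+M\,P(K^{c})$) given in the reference \cite{DHP}, which the paper cites for this proposition without reproducing a proof. No gaps: the reduction to a compact set is exactly the right substitute for the failure of Dini on the non-compact path space.
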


\begin{proposition}
\label{pr9} We have

\begin{enumerate}
\item For each $p\geq1$, $\mathbb{L}^{p}$ is a Banach space under the norm
$\left \Vert X\right \Vert _{p}:=\left(  \mathbb{\hat{E}}[|X|^{p}]\right)
^{\frac{1}{p}}$.

\item $\mathbb{L}_{\ast}^{p}$ is the completion of $B_{b}(\Omega)$ under the
Banach norm $\mathbb{\hat{E}}[|X|^{p}]^{1/p}$.

\item $\mathbb{L}_{G}^{p}$ is the completion of $C_{b}(\Omega)$.
\end{enumerate}
\end{proposition}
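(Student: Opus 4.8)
The plan is to reduce everything to the completeness assertion in item~1, since items~2 and~3 then follow essentially by definition together with the observation that $B_{b}(\Omega)$ and $C_{b}(\Omega)$ embed into $\mathcal{L}^{p}$. Throughout I would work with the representation $\hat{\mathbb{E}}[\cdot]=\sup_{P\in\mathcal{P}}E_{P}[\cdot]$ and the capacity $\hat{c}(A)=\sup_{P\in\mathcal{P}}P(A)$ supplied by \cite{DHP}. The two structural facts I need are: (i) $\hat{c}$ is countably subadditive (immediate, since each $P$ is a probability measure, so $P(\bigcup_{n}A_{n})\le\sum_{n}P(A_{n})\le\sum_{n}\hat{c}(A_{n})$ and then take the supremum over $P$); and (ii) $\hat{\mathbb{E}}$ admits monotone convergence from below on nonnegative measurable functions: if $0\le Y_{n}\uparrow Y$ then $\sup_{P}E_{P}[Y_{n}]\uparrow\sup_{P}E_{P}[Y]$ (the ``$\le$'' is monotonicity; the ``$\ge$'' follows since for each fixed $Q$, $\lim_{n}\sup_{P}E_{P}[Y_{n}]\ge\lim_{n}E_{Q}[Y_{n}]=E_{Q}[Y]$ by the usual MCT under $Q$, now take $\sup_{Q}$). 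The Markov inequality $\hat{c}(|X|>\lambda)\le\lambda^{-p}\hat{\mathbb{E}}[|X|^{p}]$ holds for the same reason, since it holds under each $P$.

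First I would check that $\|X\|_{p}=\hat{\mathbb{E}}[|X|^{p}]^{1/p}$ is a genuine norm on $\mathbb{L}^{p}=\mathcal{L}^{p}/\mathcal{N}$. Positive homogeneity is clear; the triangle inequality is Minkowski's inequality applied under each $P$ followed by a supremum, $\|X+Y\|_{p}=\sup_{P}\|X+Y\|_{L^{p}(P)}\le\sup_{P}(\|X\|_{L^{p}(P)}+\|Y\|_{L^{p}(P)})\le\|X\|_{p}+\|Y\|_{p}$; and $\|X\|_{p}=0$ forces $\hat{c}(|X|>1/n)=0$ for every $n$ by Markov, hence $\hat{c}(|X|>0)=0$ by countable subadditivity, i.e.\ $X=0$ q.s., so $X$ is the zero element of $\mathcal{L}^{p}/\mathcal{N}$. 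For completeness I would use the standard criterion that a normed space is Banach iff every absolutely convergent series converges: given $\sum_{n}\|X_{n}\|_{p}<\infty$, set $S:=\sum_{n}|X_{n}|\in L^{0}(\Omega)$, valued in $[0,\infty]$. By monotone convergence applied to the partial sums together with Minkowski, $\|S\|_{p}\le\sum_{n}\|X_{n}\|_{p}<\infty$, so $\hat{\mathbb{E}}[S^{p}]<\infty$ and Markov gives $\hat{c}(S=\infty)=0$. Hence $\sum_{n}X_{n}$ converges absolutely q.s.\ to some $X\in L^{0}(\Omega)$ with $|X|\le S$ q.s., so $X\in\mathcal{L}^{p}$, and $\|X-\sum_{n\le N}X_{n}\|_{p}\le\|\sum_{n>N}|X_{n}|\|_{p}\to 0$ by monotone convergence applied to the tails. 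This proves item~1.

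For items~2 and~3, note that $C_{b}(\Omega)\subseteq B_{b}(\Omega)\subseteq\mathcal{L}^{p}$, because a bounded measurable $X$ has $\hat{\mathbb{E}}[|X|^{p}]\le\|X\|_{\infty}^{p}<\infty$; thus both are linear subspaces of the Banach space $\mathbb{L}^{p}$ on which $\|\cdot\|_{p}$ restricts to a norm. By item~1 the closures of these subspaces in $\mathbb{L}^{p}$ are again Banach spaces, and a closed subspace containing a dense copy of the original is precisely its completion; these are by definition $\mathbb{L}^{p}_{\ast}$ and $\mathbb{L}^{p}_{G}$. If in addition one wants to match $\mathbb{L}^{p}_{G}$ with the intrinsic description $\{X\in L^{0}(\Omega):\hat{\mathbb{E}}[|X|^{p}]<\infty,\ X\text{ quasi-continuous}\}$ recalled above, that identification --- that the $\|\cdot\|_{p}$-limits of $C_{b}(\Omega)$ are exactly the quasi-continuous $p$-integrable functions --- is the substantive theorem of \cite{DHP}, proved by truncation plus approximation of quasi-continuous functions by continuous ones; I would simply invoke it.

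The only step carrying real content is the completeness in item~1, and there the only non-elementary inputs are the countable subadditivity of $\hat{c}$ and the monotone-convergence property of $\hat{\mathbb{E}}$ --- both of which rest on the representation $\hat{\mathbb{E}}=\sup_{P\in\mathcal{P}}E_{P}$ and the weak compactness of $\mathcal{P}$ from \cite{DHP}, whose proof in turn hinges on the downward continuity in Proposition~\ref{pr8}. Granting those, the argument is just the Riesz--Fischer completeness proof with ``almost everywhere'' replaced by ``quasi-surely'', so I do not expect a genuine obstacle in items~1--2; the comparatively delicate point, if it is counted as part of the statement, is the quasi-continuity characterization of $\mathbb{L}^{p}_{G}$ underlying item~3.
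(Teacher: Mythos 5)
The paper states this proposition without proof, simply citing \cite{DHP}; your argument is the standard Riesz--Fischer proof carried out there (countable subadditivity of $\hat{c}$, upward monotone convergence of $\hat{\mathbb{E}}=\sup_{P}E_{P}$, Markov, absolutely convergent series) and is correct, with the genuinely substantive point for item~3 --- the identification of the $\Vert \cdot \Vert_{p}$-closure of $C_{b}(\Omega)$ with the quasi-continuous $p$-integrable functions --- rightly deferred to that reference. One small correction to your closing remark: the countable subadditivity and the monotone convergence from below require only that $\hat{\mathbb{E}}$ be a supremum over \emph{some} family of probability measures, not the weak compactness of $\mathcal{P}$; compactness (equivalently, the downward continuity of Proposition~\ref{pr8}) is what the quasi-continuity theory behind item~3 actually uses.
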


The following Proposition is obvious.

\begin{proposition}
We have

\begin{enumerate}
\item $\mathbb{L}_{\ast}^{p}\subset \mathbb{L}^{p}\subset \mathbb{L}_{\ast}%
^{q}\subset \mathbb{L}^{q}$, $\mathbb{L}_{q.c.}^{p}\subset \mathbb{L}_{q.c.}%
^{p}$, $0<p\leq q\leq \infty$;

\item $\left \Vert X\right \Vert _{p}\uparrow \left \Vert X\right \Vert _{\infty}$,
for each $X\in \mathbb{L}^{\infty}$;

\item $p,q>1$, $\frac{1}{p}+\frac{1}{q}=1$. Then $X\in \mathbb{L}^{p}$ and
$Y\in \mathbb{L}^{q}$ implies
\[
XY\in \mathbb{L}^{1}\text{ and }\mathbb{E}[|XY|]\leq \left(  \mathbb{E}%
[|X|^{p}]\right)  ^{\frac{1}{p}}\left(  \mathbb{E}[|Y|^{q}]\right)  ^{\frac
{1}{q}}%
\]
\newline Moreover $X\in \mathbb{L}_{c}^{p}$ and $Y\in \mathbb{L}_{c}^{q}$
implies $XY\in \mathbb{L}_{c}^{1};$


\end{enumerate}
\end{proposition}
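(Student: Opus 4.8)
The plan is to reduce all three parts to two elementary ingredients. The first is the representation $\hat{\mathbb{E}}[|Z|^{r}]=\sup_{P\in\mathcal{P}}E_{P}[|Z|^{r}]$ that is built into the definition of $\mathcal{L}^{r}$, together with the fact that each $P\in\mathcal{P}$ is a probability measure: this lets me import Jensen's inequality from classical $L^{p}$-theory and then take a supremum over $P$. The second is the completeness of $\mathbb{L}^{p}$ from Proposition \ref{pr9}, which lets me identify the completion of any subspace of $\mathbb{L}^{p}$ with its $\|\cdot\|_{p}$-closure inside $\mathbb{L}^{p}$. Throughout I read $\mathbb{L}_{q.c.}^{p}$ and $\mathbb{L}_{c}^{p}$ as $\mathbb{L}_{G}^{p}$, the $\|\cdot\|_{p}$-closure of $C_{b}(\Omega)$, and I take the intended range of exponents to be $0<q\le p\le\infty$, so that the space with more integrability is the smaller one.

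For Part 1 the workhorse is the norm monotonicity $\|X\|_{r}\le\|X\|_{s}$ for $0<r\le s\le\infty$: when $s<\infty$, Jensen applied to each $P$ gives $E_{P}[|X|^{r}]^{1/r}\le E_{P}[|X|^{s}]^{1/s}$, and taking $\sup_{P}$ yields $\hat{\mathbb{E}}[|X|^{r}]^{1/r}\le\hat{\mathbb{E}}[|X|^{s}]^{1/s}$; the case $s=\infty$ follows from $|X|^{r}\le\|X\|_{\infty}^{r}$ q.s. This immediately gives $\mathbb{L}^{s}\subset\mathbb{L}^{r}$, and since a $\|\cdot\|_{s}$-Cauchy sequence in $B_{b}(\Omega)$ is $\|\cdot\|_{r}$-Cauchy with the same limit inside $\mathbb{L}^{r}$, also $\mathbb{L}_{\ast}^{s}\subset\mathbb{L}_{\ast}^{r}$; the same argument with $C_{b}(\Omega)$ gives $\mathbb{L}_{G}^{s}\subset\mathbb{L}_{G}^{r}$. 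Next, $B_{b}(\Omega)\subset\mathbb{L}^{p}$ and $\mathbb{L}^{p}$ is complete, so $\mathbb{L}_{\ast}^{p}$ is nothing but the $\|\cdot\|_{p}$-closure of $B_{b}(\Omega)$ in $\mathbb{L}^{p}$; hence $\mathbb{L}_{\ast}^{p}\subset\mathbb{L}^{p}$, and likewise $\mathbb{L}_{G}^{p}\subset\mathbb{L}^{p}$. Finally, for $q<p$ the sandwiched inclusion $\mathbb{L}^{p}\subset\mathbb{L}_{\ast}^{q}$ is a truncation argument: for $X\in\mathbb{L}^{p}$ set $X_{N}:=(-N)\vee X\wedge N\in B_{b}(\Omega)$; then $|X-X_{N}|^{q}\le|X|^{q}\mathbf{1}_{\{|X|>N\}}\le N^{q-p}|X|^{p}$, so $\hat{\mathbb{E}}[|X-X_{N}|^{q}]\le N^{q-p}\hat{\mathbb{E}}[|X|^{p}]\to 0$, whence $X\in\mathbb{L}_{\ast}^{q}$.

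For Part 2, $p\mapsto\|X\|_{p}$ is nondecreasing by the above and is bounded above by $\|X\|_{\infty}$ since $|X|\le\|X\|_{\infty}$ q.s.; it remains to show $\liminf_{p\to\infty}\|X\|_{p}\ge\|X\|_{\infty}$. Fixing $\lambda<\|X\|_{\infty}$, the set $A:=\{|X|>\lambda\}$ is not polar, so $c_{0}:=\hat{c}(A)>0$; from $|X|^{p}\ge\lambda^{p}\mathbf{1}_{A}$ and monotonicity of $\hat{\mathbb{E}}$ we get $\|X\|_{p}\ge\lambda c_{0}^{1/p}\to\lambda$, and letting $\lambda\uparrow\|X\|_{\infty}$ finishes it. For Part 3, I would deduce Hölder directly from the axioms of $\hat{\mathbb{E}}$: if $\|X\|_{p}=0$ then $X=0$ q.s., so $XY=0$ q.s. and both sides vanish (using $\mathcal{N}^{1}=\mathcal{N}$); otherwise, with $a:=|X|/\|X\|_{p}$ and $b:=|Y|/\|Y\|_{q}$, Young's inequality $ab\le a^{p}/p+b^{q}/q$ holds pointwise, so monotonicity, sub-additivity and positive homogeneity give $\hat{\mathbb{E}}[ab]\le\frac{1}{p}\hat{\mathbb{E}}[a^{p}]+\frac{1}{q}\hat{\mathbb{E}}[b^{q}]=\frac{1}{p}+\frac{1}{q}=1$, i.e. $\hat{\mathbb{E}}[|XY|]\le\|X\|_{p}\|Y\|_{q}<\infty$; as $XY\in L^{0}(\Omega)$ this means $XY\in\mathbb{L}^{1}$. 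For the last assertion, pick $X_{n},Y_{n}\in C_{b}(\Omega)$ with $\|X_{n}-X\|_{p}\to 0$ and $\|Y_{n}-Y\|_{q}\to 0$; then $X_{n}Y_{n}\in C_{b}(\Omega)$ and, applying the Hölder bound to $X_{n}Y_{n}-XY=(X_{n}-X)Y_{n}+X(Y_{n}-Y)$, one gets $\|X_{n}Y_{n}-XY\|_{1}\le\|X_{n}-X\|_{p}\|Y_{n}\|_{q}+\|X\|_{p}\|Y_{n}-Y\|_{q}\to 0$ since $\sup_{n}\|Y_{n}\|_{q}<\infty$; hence $XY\in\mathbb{L}_{G}^{1}$.

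I do not expect a genuine obstacle here — as the paper remarks, the statement is essentially routine. The two places that need a moment's care are pinning down the direction of the inclusions (and hence why $q<p$ is needed for $\mathbb{L}^{p}\subset\mathbb{L}_{\ast}^{q}$, namely to secure the domination $|X|^{q}\mathbf{1}_{\{|X|>N\}}\le N^{q-p}|X|^{p}$ in the truncation step) and recording the elementary fact that $\hat{c}(\{|X|>\lambda\})>0$ whenever $\lambda<\|X\|_{\infty}$.
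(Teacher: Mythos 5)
Your proof is correct. The paper offers no argument for this proposition at all --- it is introduced with ``The following Proposition is obvious'' --- so there is nothing to compare against, and what you have written is a sound filling-in of the omitted details: Jensen's inequality under each $P\in\mathcal{P}$ followed by a supremum for the norm monotonicity $\left\Vert X\right\Vert _{r}\leq\left\Vert X\right\Vert _{s}$; identification of the completion of $B_{b}(\Omega)$ (resp.\ $C_{b}(\Omega)$) with its closure inside the complete space $\mathbb{L}^{p}$; the truncation bound $|X-X_{N}|^{q}\leq|X|^{q}\mathbf{1}_{\{|X|>N\}}\leq N^{q-p}|X|^{p}$ for the only genuinely nontrivial inclusion $\mathbb{L}^{p}\subset\mathbb{L}_{\ast}^{q}$; and Young's inequality combined with monotonicity, sub-additivity and positive homogeneity of $\mathbb{\hat{E}}$ for the H\"{o}lder bound, with the product rule for $\mathbb{L}_{G}^{1}$ obtained by approximating in $C_{b}(\Omega)$. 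You were also right to insist that the exponents must be read with $q$ strictly less than $p$ in the middle inclusion (as printed, with $p\leq q$, the chain runs the wrong way), since $\mathbb{L}^{p}\not\subset\mathbb{L}_{\ast}^{p}$ in general --- this is exactly the content of the uniform-integrability characterization in Proposition \ref{Prop5}. An equivalent shortcut for H\"{o}lder, closer in spirit to how the paper handles such estimates elsewhere, is to apply the classical inequality under each fixed $P\in\mathcal{P}$ and then take the supremum, but your axiomatic derivation is equally valid.
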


\begin{proposition}
For a given $p\in(0,+\infty]$, let $\{X_{n}\}_{n=1}^{\infty}$ be a sequence in
$\mathbb{L}^{p}$ which converges to $X$ in $\mathbb{L}^{p}$. Then there exists
a subsequence $(X_{n_{k}})$ which converges to $X$ quasi-surely in the sense
that it converges to $X$ outside a polar set.
\end{proposition}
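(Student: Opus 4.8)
The plan is to transport into the present setting the classical argument showing that $L^{p}$-convergence yields a subsequence converging almost everywhere, with Lebesgue measure replaced by the capacity $\hat{c}=\sup_{P\in\mathcal{P}}P$. The only inputs needed are two soft facts about $\hat{c}$. First, a Markov-type inequality: for $0<p<\infty$, $\lambda>0$ and any $Y$ with $\mathbb{\hat{E}}[|Y|^{p}]=\sup_{P\in\mathcal{P}}E_{P}[|Y|^{p}]$ (in particular for $Y\in\mathbb{L}^{p}$),
\[
\hat{c}(|Y|>\lambda)=\sup_{P\in\mathcal{P}}P(|Y|>\lambda)\leq\sup_{P\in\mathcal{P}}\lambda^{-p}E_{P}[|Y|^{p}]=\lambda^{-p}\,\mathbb{\hat{E}}[|Y|^{p}].
\]
Second, countable subadditivity of $\hat{c}$: for $(A_{k})_{k\geq1}\subset\mathcal{B}(\Omega)$,
\[
\hat{c}\Big(\bigcup_{k}A_{k}\Big)=\sup_{P\in\mathcal{P}}P\Big(\bigcup_{k}A_{k}\Big)\leq\sup_{P\in\mathcal{P}}\sum_{k}P(A_{k})\leq\sum_{k}\hat{c}(A_{k}).
\]
Both follow immediately from the representation of $\mathbb{\hat{E}}$ as a supremum of the sigma-additive measures $P\in\mathcal{P}$ together with the corresponding elementary properties of each $P$; no weak compactness of $\mathcal{P}$ is needed.

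Assume first $p\in(0,\infty)$ and fix Borel representatives of $X_{n}$ and $X$. Since $\mathbb{\hat{E}}[|X_{n}-X|^{p}]\to0$, I would pick inductively indices $n_{1}<n_{2}<\cdots$ with $\mathbb{\hat{E}}[|X_{n_{k}}-X|^{p}]\leq 2^{-k(p+1)}$, and set $A_{k}:=\{|X_{n_{k}}-X|>2^{-k}\}\in\mathcal{B}(\Omega)$. By the Markov inequality above, $\hat{c}(A_{k})\leq 2^{kp}\cdot 2^{-k(p+1)}=2^{-k}$, so $\sum_{k}\hat{c}(A_{k})<\infty$. Let $N:=\bigcap_{m\geq1}\bigcup_{k\geq m}A_{k}$. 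Countable subadditivity gives $\hat{c}(N)\leq\sum_{k\geq m}\hat{c}(A_{k})$ for every $m$, whence $\hat{c}(N)=0$, i.e. $N$ is polar. If $\omega\notin N$, then $\omega\notin A_{k}$ for all $k$ beyond some $m=m(\omega)$, that is $|X_{n_{k}}(\omega)-X(\omega)|\leq 2^{-k}$ for $k\geq m$; letting $k\to\infty$ yields $X_{n_{k}}(\omega)\to X(\omega)$. Hence $X_{n_{k}}\to X$ everywhere outside the polar set $N$, which is the assertion.

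For $p=+\infty$ I would reduce to the case $p=1$: since $\mathbb{L}^{\infty}\subset\mathbb{L}^{1}$ with $\left\Vert\cdot\right\Vert_{1}\leq\left\Vert\cdot\right\Vert_{\infty}$, the hypothesis $\left\Vert X_{n}-X\right\Vert_{\infty}\to0$ forces $\mathbb{\hat{E}}[|X_{n}-X|]\to0$, so the argument above applies verbatim with $p=1$. The whole proof is thus essentially formal; the only points that merit any care are the two preliminary properties of $\hat{c}$ (Markov's inequality and countable subadditivity, both inherited from the measures in $\mathcal{P}$) and the routine inductive extraction of a subsequence along which $\mathbb{\hat{E}}[|X_{n_{k}}-X|^{p}]$ decays geometrically. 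I do not expect a genuine obstacle beyond this.
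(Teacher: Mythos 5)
Your argument is correct: the capacity Chebyshev inequality and countable subadditivity of $\hat{c}$ both follow from the representation $\hat{c}(A)=\sup_{P\in\mathcal{P}}P(A)$, and the Borel--Cantelli extraction of a geometrically decaying subsequence then gives quasi-sure convergence outside the polar set $N=\bigcap_{m}\bigcup_{k\geq m}A_{k}$; the reduction of $p=\infty$ to $p=1$ via $\left\Vert \cdot\right\Vert _{1}\leq\left\Vert \cdot\right\Vert _{\infty}$ is also fine. The paper itself states this proposition without proof (it is recalled from the cited reference of Denis, Hu and Peng), and your argument is precisely the standard one used there, so there is nothing to flag.
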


We also have

\begin{proposition}
\label{Prop5}For each $p>0$,%
\[
\mathbb{L}_{\ast}^{p}=\{X\in \mathbb{L}^{p}:\lim_{n\rightarrow \infty}%
\mathbb{E}[|X|^{p}\mathbf{1}_{\{|X|>n\}}]=0\}.
\]

\end{proposition}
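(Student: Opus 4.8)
The plan is to establish the two inclusions separately, the common tool in both directions being the truncation $X^{(n)}:=(-n)\vee(X\wedge n)$. Note first that for any representative $X\in\mathcal{L}^{p}$ the function $X^{(n)}$ is a bounded Borel function, hence $X^{(n)}\in B_{b}(\Omega)$; note also that the functional $X\mapsto\mathbb{\hat{E}}[|X|^{p}\mathbf{1}_{\{|X|>n\}}]$ depends only on the q.s.\ equivalence class of $X$, so the right-hand side of the claimed identity is well defined on $\mathbb{L}^{p}=\mathcal{L}^{p}/\mathcal{N}$.

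For the inclusion ``$\supseteq$'', suppose $X\in\mathbb{L}^{p}$ satisfies $\mathbb{\hat{E}}[|X|^{p}\mathbf{1}_{\{|X|>n\}}]\to0$, and fix a representative. A pointwise check gives $|X-X^{(n)}|\le|X|\mathbf{1}_{\{|X|>n\}}$, hence $|X-X^{(n)}|^{p}\le|X|^{p}\mathbf{1}_{\{|X|>n\}}$, so by monotonicity of $\mathbb{\hat{E}}$ we get $\mathbb{\hat{E}}[|X-X^{(n)}|^{p}]\le\mathbb{\hat{E}}[|X|^{p}\mathbf{1}_{\{|X|>n\}}]\to0$. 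Since each $X^{(n)}\in B_{b}(\Omega)$, this exhibits $X$ as a limit (in the $\mathbb{L}^{p}$-metric) of elements of $B_{b}(\Omega)$, i.e.\ $X\in\mathbb{L}_{\ast}^{p}$.

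For the inclusion ``$\subseteq$'', take $X\in\mathbb{L}_{\ast}^{p}$ and choose $Y_{k}\in B_{b}(\Omega)$ with $\mathbb{\hat{E}}[|X-Y_{k}|^{p}]\to0$; let $|Y_{k}|\le M_{k}$. Using the elementary inequality $|a+b|^{p}\le C_{p}(|a|^{p}+|b|^{p})$ (with $C_{p}=1$ for $p\le1$ and $C_{p}=2^{p-1}$ for $p\ge1$), together with sub-additivity and monotonicity of $\mathbb{\hat{E}}$,
\[
\mathbb{\hat{E}}[|X|^{p}\mathbf{1}_{\{|X|>n\}}]\le C_{p}\,\mathbb{\hat{E}}[|Y_{k}|^{p}\mathbf{1}_{\{|X|>n\}}]+C_{p}\,\mathbb{\hat{E}}[|X-Y_{k}|^{p}]\le C_{p}M_{k}^{p}\,\hat{c}(\{|X|>n\})+C_{p}\,\mathbb{\hat{E}}[|X-Y_{k}|^{p}].
\]
Here $\mathbb{\hat{E}}[|Y_{k}|^{p}\mathbf{1}_{\{|X|>n\}}]\le M_{k}^{p}\,\mathbb{\hat{E}}[\mathbf{1}_{\{|X|>n\}}]=M_{k}^{p}\,\hat{c}(\{|X|>n\})$, and the Markov-type bound $\mathbf{1}_{\{|X|>n\}}\le n^{-p}|X|^{p}$ gives $\hat{c}(\{|X|>n\})\le n^{-p}\mathbb{\hat{E}}[|X|^{p}]\to0$ as $n\to\infty$, since $X\in\mathbb{L}^{p}$ forces $\mathbb{\hat{E}}[|X|^{p}]<\infty$. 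Thus, given $\varepsilon>0$, first pick $k$ with $C_{p}\mathbb{\hat{E}}[|X-Y_{k}|^{p}]<\varepsilon/2$, then pick $n_{0}$ with $C_{p}M_{k}^{p}\hat{c}(\{|X|>n\})<\varepsilon/2$ for $n\ge n_{0}$, obtaining $\mathbb{\hat{E}}[|X|^{p}\mathbf{1}_{\{|X|>n\}}]<\varepsilon$ for $n\ge n_{0}$.

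The only genuine point (and the one I expect to be the ``hard'' step, though it is mild) is the vanishing of the capacity tail $\hat{c}(\{|X|>n\})$, which is precisely where the hypothesis $X\in\mathbb{L}^{p}$ enters; the remaining work is bookkeeping: checking that $X^{(n)}$, $\mathbf{1}_{\{|X|>n\}}$ and $|Y_{k}|^{p}\mathbf{1}_{\{|X|>n\}}$ all lie in $B_{b}(\Omega)\subset\mathcal{L}^{p}$ so that $\mathbb{\hat{E}}$ is defined on them, keeping track of the constant $C_{p}$ so that the case $p<1$ (where $\mathbb{\hat{E}}[|X-Y|^{p}]$ is only a metric, not a norm) is handled uniformly with $p\ge1$, and invoking only monotonicity, positive homogeneity and sub-additivity of $\mathbb{\hat{E}}$.
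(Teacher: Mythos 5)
Your proof is correct. The paper itself states Proposition \ref{Prop5} without proof (it is imported from \cite{DHP}), so there is nothing to compare line by line; but your two-sided argument --- truncating $X^{(n)}=(-n)\vee(X\wedge n)$ with $|X-X^{(n)}|^{p}\le|X|^{p}\mathbf{1}_{\{|X|>n\}}$ for one inclusion, and the Markov bound $\hat{c}(\{|X|>n\})\le n^{-p}\mathbb{\hat{E}}[|X|^{p}]$ combined with an approximating bounded $Y_{k}$ for the other --- is the standard one, and is essentially the same technique the paper deploys for the process-level analogue, Proposition \ref{Gt13}. Your explicit handling of the constant $C_{p}$ for $0<p<1$ and of well-definedness modulo $\mathcal{N}$ is a welcome bit of care that the paper leaves implicit.
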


We introduce the following properties. They are important in this paper:

\begin{proposition}
\label{p1} For each $0\leq t<T$, $\xi \in \mathbb{L}^{2}(\Omega_{t})$, we have%
\[
\mathbb{\hat{E}}[\xi(B_{T}-B_{t})]=0.
\]

\end{proposition}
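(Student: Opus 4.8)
The plan is to prove the identity through the dual representation $\mathbb{\hat{E}}[\cdot]=\sup_{P\in\mathcal{P}}E_{P}[\cdot]$ together with the fact that, under \emph{every} $P\in\mathcal{P}$, the canonical process $B$ is a square-integrable martingale for the raw filtration $(\mathcal{B}(\Omega_{s}))_{s\ge 0}$. The first step is bookkeeping: since $B_{T}-B_{t}\sim\sqrt{T-t}\,X$ with $X$ $G$-normal, Definition \ref{Def-Gnormal} gives $\mathbb{\hat{E}}[|B_{T}-B_{t}|^{2}]=(T-t)\cdot 2G(1)<\infty$, and $B_{T}-B_{t}$ depends continuously (indeed Lipschitz-ly) on $\omega$, so $B_{T}-B_{t}\in\mathbb{L}^{2}(\Omega)$. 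Hölder's inequality (stated above) then gives $\xi(B_{T}-B_{t})\in\mathbb{L}^{1}(\Omega)$ for $\xi\in\mathbb{L}^{2}(\Omega_{t})$, so that $\mathbb{\hat{E}}[\xi(B_{T}-B_{t})]=\sup_{P\in\mathcal{P}}E_{P}[\xi(B_{T}-B_{t})]$ and it suffices to show $E_{P}[\xi(B_{T}-B_{t})]=0$ for each fixed $P\in\mathcal{P}$.

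For fixed $P\in\mathcal{P}$ I would invoke the structure of $\mathcal{P}$ from \cite{DHP}: under $P$ the process $B$ is a stochastic integral $\int_{0}^{\cdot}\theta_{s}\,dW_{s}$ of a bounded integrand against a Brownian motion, hence a square-integrable martingale in the Brownian filtration and therefore, by the tower property, a martingale for its own natural filtration, which is precisely $(\mathcal{B}(\Omega_{s}))_{s\ge 0}$ because $\mathcal{B}(\Omega_{s})=\sigma(B_{r}:r\le s)$. Thus $E_{P}[B_{T}-B_{t}\mid\mathcal{B}(\Omega_{t})]=0$ $P$-a.s. Since $\xi$ is $\mathcal{B}(\Omega_{t})$-measurable and $\xi\in L^{2}(P)$, $B_{T}-B_{t}\in L^{2}(P)$ (because $E_{P}[\xi^{2}]\le\mathbb{\hat{E}}[\xi^{2}]<\infty$ and likewise for $B_{T}-B_{t}$), the pull-out property of conditional expectation for the product of two $L^{2}$ random variables yields $E_{P}[\xi(B_{T}-B_{t})]=E_{P}\big[\xi\,E_{P}[B_{T}-B_{t}\mid\mathcal{B}(\Omega_{t})]\big]=0$. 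Taking the supremum over $P$ finishes the proof.

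A simpler argument works when $\xi$ is a bounded Lipschitz cylinder functional $\varphi(B_{t_{1}},\dots,B_{t_{n}})$ with $t_{1}\le\cdots\le t_{n}\le t$: property (ii) of Definition \ref{Def-3} lets one condition on the past, $\mathbb{\hat{E}}[\varphi(B_{t_{1}},\dots,B_{t_{n}})(B_{T}-B_{t})]=\mathbb{\hat{E}}[\psi(B_{t_{1}},\dots,B_{t_{n}})]$ with $\psi(y)=\mathbb{\hat{E}}[\varphi(y)(B_{T}-B_{t})]$; for each fixed $y$ the constant $\varphi(y)$ can be pulled out (splitting into positive and negative parts), and $\mathbb{\hat{E}}[B_{T}-B_{t}]=\mathbb{\hat{E}}[-(B_{T}-B_{t})]=0$ follows from the symmetry $aX+b\bar{X}\sim\sqrt{a^{2}+b^{2}}X$ of the $G$-normal law (take $a=b=1$, so $2\mathbb{\hat{E}}[X]=\sqrt{2}\,\mathbb{\hat{E}}[X]$), whence $\psi\equiv 0$. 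The point of the proposition, and the main obstacle, is precisely that it must cover $\xi$ that are \emph{not} quasi-continuous: a general $\xi\in\mathbb{L}^{2}(\Omega_{t})$ cannot be approximated in $\|\cdot\|_{2}$ by such cylinder functionals (nor need it lie in $\mathbb{L}_{\ast}^{2}$, cf. Proposition \ref{Prop5}), so one cannot simply close up the elementary computation; routing through $\mathcal{P}$ and the per-measure martingale property of $B$ is what makes the general case go through.
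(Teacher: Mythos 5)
Your proof is correct, but the key step is carried out by a genuinely different route than the paper's. You fix $P\in\mathcal{P}$ and then appeal to the structural description of $\mathcal{P}$ from \cite{DHP} (each $P$ is the law of $\int_0^\cdot\theta_s\,dW_s$, so $B$ is a $P$-martingale for $(\mathcal{B}(\Omega_s))$), after which the pull-out property of conditional expectation finishes the job. The paper never invokes this representation: it first notes that for $\xi\in C_b(\Omega_t)$ the two-sided identity $\mathbb{\hat{E}}[\xi(B_T-B_t)]=\mathbb{\hat{E}}[-\xi(B_T-B_t)]=0$ (exactly the independence-plus-$G$-normal-symmetry computation you relegate to your last paragraph) squeezes $E_P[\xi(B_T-B_t)]=0$ for every $P$, and then handles general $\xi\in\mathbb{L}^2(\Omega_t)$ by approximating $\xi$ by elements of $C_b(\Omega_t)$ in $L^2_P(\Omega_t)$ \emph{for each fixed $P$ separately}, which is classical density and requires no quasi-continuity. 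So your closing remark that ``one cannot simply close up the elementary computation'' is only half right: uniform-in-$P$ approximation in the norm $\mathbb{\hat{E}}[|\cdot|^2]^{1/2}$ indeed fails, but $P$-wise approximation succeeds, and since the identity to be proved is an equality (zero from both sides), it transfers to each $E_P$ and hence to the supremum. Your route buys a more transparent probabilistic picture but leans on an external structural fact about $\mathcal{P}$ that the paper does not state; to be fully rigorous you should also note that $\mathcal{P}$ is a weak closure of the family of such laws, so the martingale property must be checked to survive weak limits (it does, via the uniform $L^2$ bound on increments). The paper's argument is more self-contained within the sublinear-expectation framework, using only the sup-representation on $C_b(\Omega)$ and classical measure theory under each $P$.
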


\begin{proof}
Let $P\in \mathcal{P}$ be given. If $\xi \in C_{b}(\Omega_{t})$, then we have
\[
0=-\mathbb{\hat{E}}[-\xi(B_{T}-B_{t})]\leq E_{P}[\xi(B_{T}-B_{t}%
)]\leq \mathbb{\hat{E}}[\xi(B_{T}-B_{t})]=0.
\]
In the case when $\xi \in \mathbb{L}^{2}(\Omega_{t})$, we have $E_{P}[|\xi
|^{2}]\leq \mathbb{\hat{E}}[|\xi|^{2}]<\infty$. Since it is known that
$C_{b}(\Omega_{t})$ is dense in $L_{P}^{2}(\Omega_{t})$, we then can choose a
sequence $\{ \xi_{n}\}_{n=1}^{\infty}$ in $C_{b}(\Omega_{t})$ such that
$E_{P}[|\xi-\xi_{n}|^{2}]\rightarrow0$. Thus
\[
E_{P}[\xi(B_{T}-B_{t})]=\lim_{n\rightarrow \infty}E_{P}[\xi_{n}(B_{T}%
-B_{t})]=0.
\]
The proof is complete.
\end{proof}

\begin{proposition}
\label{p2} For each $0\leq t\leq T$, $\xi \in B_{b}({\Omega_{t}})$, we have
\begin{equation}
\hat{\mathbb{E}}[\xi^{2}(B_{T}-B_{t})^{2}-\overline{\sigma}^{2}\xi
^{2}(T-t)]\leq0. \label{PropP1}%
\end{equation}

\end{proposition}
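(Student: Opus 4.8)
The plan is to reduce the claim to a one-dimensional computation by conditioning on $\mathcal{F}_t$ and exploiting the independence and scaling properties of the increment $B_T - B_t$. First I would observe that it suffices to treat $\xi$ bounded, say $|\xi| \le M$, so all the expectations below are finite; and that $B_T - B_t \sim \sqrt{T-t}\,X$ where $X$ is $G$-normal distributed, hence independent of $\mathcal{F}_t \supset \Omega_t$. The key point is that for a bounded $\mathcal{B}(\Omega_t)$-measurable $\xi$ we may write, using the independence in Definition~\ref{Def-1},
\[
\hat{\mathbb{E}}[\xi^2 (B_T-B_t)^2] = \hat{\mathbb{E}}\big[\, \big(\hat{\mathbb{E}}[y^2 (B_T-B_t)^2]\big)_{y=\xi}\,\big] = \hat{\mathbb{E}}\big[\xi^2\, \hat{\mathbb{E}}[(B_T-B_t)^2]\big],
\]
where in the last step I pull the nonnegative scalar $y^2$ out by positive homogeneity (property (d)). Then $\hat{\mathbb{E}}[(B_T-B_t)^2] = (T-t)\,\hat{\mathbb{E}}[X^2] = \overline{\sigma}^2 (T-t)$ by the scaling in Definition~\ref{Def-3}(iii), with $\overline{\sigma}^2 := \hat{\mathbb{E}}[X^2] = 2G(1)$. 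This gives exactly $\hat{\mathbb{E}}[\xi^2(B_T-B_t)^2] = \overline{\sigma}^2(T-t)\,\hat{\mathbb{E}}[\xi^2]$, and then sub-additivity (property (c)) yields
\[
\hat{\mathbb{E}}[\xi^2(B_T-B_t)^2 - \overline{\sigma}^2\xi^2(T-t)] \le \hat{\mathbb{E}}[\xi^2(B_T-B_t)^2] - \overline{\sigma}^2(T-t)\,\hat{\mathbb{E}}[\xi^2] = 0.
\]

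The one genuine subtlety is justifying the conditioning identity for $\xi \in B_b(\Omega_t)$ rather than $\xi \in C_{b,Lip}$: the definition of independence in Definition~\ref{Def-1} is stated for test functions $\varphi \in C_{b,Lip}$, whereas here the relevant function $(y,z)\mapsto y^2 z^2$ is unbounded and $\xi$ itself is only bounded Borel, not Lipschitz continuous. I would handle this by first proving the identity for $\xi \in C_b(\Omega_t)$ with $(B_T-B_t)^2$ replaced by the truncated bounded Lipschitz function $(\cdot)^2 \wedge k$ (so both factors are in $C_{b,Lip}$ after composing with a smooth cutoff), then letting $k \to \infty$ using monotone convergence in the form of Proposition~\ref{pr8}, and finally extending from $C_b(\Omega_t)$ to $B_b(\Omega_t)$ by the density statement in Proposition~\ref{pr9}(2) together with the $\mathbb{L}^2$-continuity of both sides (each side is controlled by $\|\xi_n - \xi\|_{\mathbb{L}^2}$ after a Cauchy–Schwarz estimate, using that $(B_T-B_t)^2 \in \mathbb{L}^2$).

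Alternatively — and this is probably the cleaner route to write up — I would avoid the unbounded-test-function issue entirely by working directly with the representing family $\mathcal{P}$: for each $P \in \mathcal{P}$ one has, by the (ordinary) conditional expectation under $P$ and the fact that $\hat{\mathbb{E}}[(B_T-B_t)^2 \mid \mathcal{F}_t] \le \overline{\sigma}^2(T-t)$ holds $P$-a.s. in the appropriate sense, $E_P[\xi^2(B_T-B_t)^2] \le \overline{\sigma}^2(T-t)\,E_P[\xi^2] \le \overline{\sigma}^2(T-t)\,\hat{\mathbb{E}}[\xi^2]$, and then take the supremum over $P$. I expect the main obstacle to be exactly this measure-theoretic bookkeeping at the interface between the sublinear expectation $\hat{\mathbb{E}}$ and the family $\mathcal{P}$ — specifically, making precise that the $G$-normal upper bound $\hat{\mathbb{E}}[X^2] = \overline{\sigma}^2$ transfers to a conditional bound valid $P$-a.s. for every $P\in\mathcal{P}$ — whereas the algebra itself (pulling out $\xi^2$, scaling, the final sub-additivity step) is routine.
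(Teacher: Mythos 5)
Your overall strategy (reduce everything to a one-dimensional computation via the independence of the increment $B_T-B_t$ from $\Omega_t$) is viable and genuinely different from the paper's, but as written one step is simply false: from the identity $\hat{\mathbb{E}}[\xi^2(B_T-B_t)^2]=\overline{\sigma}^2(T-t)\hat{\mathbb{E}}[\xi^2]$ you conclude ``by sub-additivity'' that $\hat{\mathbb{E}}[X-Y]\le\hat{\mathbb{E}}[X]-\hat{\mathbb{E}}[Y]$. Property (c) gives exactly the opposite inequality, $\hat{\mathbb{E}}[X]-\hat{\mathbb{E}}[Y]\le\hat{\mathbb{E}}[X-Y]$; the direction you need would require $\hat{\mathbb{E}}[-Y]=-\hat{\mathbb{E}}[Y]$, i.e.\ no mean uncertainty for $Y=\overline{\sigma}^2(T-t)\xi^2$, which fails in general. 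The repair is easy and stays inside your framework: apply the independence identity to the \emph{combined} test function $\varphi(y,z)=y^2z^2-\overline{\sigma}^2y^2(T-t)$, so that the inner expectation is $y^2\,\hat{\mathbb{E}}[(B_T-B_t)^2-\overline{\sigma}^2(T-t)]=y^2\cdot 0=0$ by positive homogeneity and translation by a constant; the whole left-hand side of (\ref{PropP1}) is then $0$. Do not split the two terms and try to recombine them afterwards.

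The technical issues you flag are real, and you identify them honestly: the independence in Definition \ref{Def-1} is stated for finite-dimensional random vectors and $C_{b,Lip}$ test functions, so using it for a path-dependent $\xi\in B_b(\Omega_t)$ against the unbounded function $z^2$ requires the truncation-plus-density argument you sketch (the passage $k\to\infty$ is a routine tail estimate since $B_T-B_t$ has finite fourth moment, not a monotone-convergence issue). The paper's proof avoids all of this by a different route: for $\xi\in C_b(\Omega_t)$ it applies the previously established It\^{o} formula to get $\hat{\mathbb{E}}[\xi^2(B_T-B_t)^2-\xi^2(\langle B\rangle_T-\langle B\rangle_t)]=0$ and then invokes the quasi-sure bound $\langle B\rangle_T-\langle B\rangle_t\le\overline{\sigma}^2(T-t)$; the extension to $B_b(\Omega_t)$ is done measure by measure, using density of $C_b(\Omega_t)$ in $L^p(\Omega,\mathcal{F}_t,P)$ for each fixed $P\in\mathcal{P}$ and then taking the supremum. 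This is essentially your ``alternative route,'' with the conditional variance bound under each $P$ supplied by the pathwise quadratic-variation inequality. Whichever route you choose, you must actually supply that missing ingredient --- either the extension of the independence identity, or the $P$-almost-sure conditional bound --- rather than leave it as an acknowledged obstacle.
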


\begin{proof}
If $\xi \in C_{b}(\Omega_{t})$, then by [Peng], we have the following It\^{o}'s
formula:
\[
\xi^{2}[(B_{T}-B_{t})^{2}-(\langle B_{T}\rangle-\langle B_{t}\rangle
)]=2\int_{t}^{T}\xi^{2}B_{s}dB_{s}.\text{ }%
\]
It follows that $\mathbb{\hat{E}[}\xi^{2}(B_{T}-B_{t})^{2}-\xi^{2}(\langle
B_{T}\rangle-\langle B_{t}\rangle)]=0$. On the other hand, we have $\langle
B_{T}\rangle-\langle B_{t}\rangle \leq \bar{\sigma}^{2}(T-t)$, quasi surely.
Thus (\ref{PropP1}) holds for $\xi \in C_{b}(\Omega_{t})$. It follows that, for
each fixed $P\in \mathcal{P}$, we have
\begin{equation}
E_{P}\mathbb{[}\xi^{2}(B_{T}-B_{t})^{2}-\xi^{2}(\langle B_{T}\rangle-\langle
B_{t}\rangle)]\leq0.\label{PropP1-1}%
\end{equation}
In the case when $\xi \in B_{b}(\Omega_{t})$, we can find a sequence $\{ \xi
_{n}\}_{n=1}^{\infty}$ in $C_{b}(\Omega_{t})$, such that $\xi_{n}%
\rightarrow \xi$ in $L^{p}(\Omega,\mathcal{F}_{t},P)$, for some $p>2$. Thus we
have
\[
E_{P}\mathbb{[}\xi_{n}^{2}(B_{T}-B_{t})^{2}-\xi_{n}^{2}(\langle B_{T}%
\rangle-\langle B_{t}\rangle)]\leq0,
\]
and then, by letting $n\rightarrow \infty$, obtain (\ref{PropP1-1}) for $\xi \in
B_{b}(\Omega_{t})$. Thus (\ref{PropP1}) follows immediately for $\xi \in
B_{b}(\Omega_{t})$.
\end{proof}

\section{A generalized Ito's Integral}

For notational simplification, in the rest of the paper we only discuss
$1$-dimensional Brownian motion, i.e., $d=1$. But all the results can be
generalized to multi-dimensional situation. We refer to
\cite{Peng2006b,Peng2007} for the corresponding techniques. For $p\geq1$ and
$T\in \mathbb{R}_{+}$ be fixed, we first consider the following simple type of
processes:
\begin{align*}
M_{b,0}(0,T) &  =\{ \eta:\eta_{t}(\omega)=\sum_{j=0}^{N-1}\xi_{j}%
(\omega)\mathbf{I}_{[t_{j},t_{j+1})}(t),\\
&  \forall N>0,0=t_{0}<\cdots<t_{N}=T,\xi_{j}(\omega)\in B_{b}(\Omega_{t_{j}%
}),j=0,\cdots,N-1\}.
\end{align*}

\begin{definition}
For an $\eta \in M_{b,0}(0,T)$ with $\eta_{t}=\sum_{j=0}^{N-1}\xi_{j}%
(\omega)\mathbf{I}_{[t_{j},t_{j}+1)}(t)$, the related Bochner integral is
\[
\int_{0}^{T}\eta_{t}(\omega)dt=\sum_{j=0}^{N-1}\xi_{j}(\omega)(t_{j+1}%
-t_{j}).
\]

\end{definition}

For each $\eta \in M_{b,0}(0,T)$ we set
\[
\hat{\mathbb{E}}_{T}[\eta]:=\frac{1}{T}\hat{\mathbb{E}}[\int_{0}^{T}\eta
_{t}dt]=\frac{1}{T}\hat{\mathbb{E}}[\sum_{j=0}^{N-1}\xi_{j}(\omega
)(t_{j+1}-t_{j})].
\]

We can introduce a natural norm $||\eta||_{M^{p}(0,T)}=\{ \hat{\mathbb{E}%
}[\int_{0}^{T}|\eta_{t}|^{p}dt]\}^{1/p}$. Under this norm, $M_{b,0}(0,T)$ can
be continuously extended to a Banach space.

\begin{definition}
For each $p\geq1$, we denote by $M_{\ast}^{p}(0,T)$ the completion of
$M_{b,0}(0,T)$ under the norm
\[
||\eta||_{M^{p}(0,T)}=\{ \hat{\mathbb{E}}[\int_{0}^{T}|\eta_{t}|^{p}%
dt]\}^{1/p}.
\]

\end{definition}

We have $M_{\ast}^{p}(0,T)\supset M_{\ast}^{q}(0,T)$, for $p\leq q$. The
following process
\[
\eta_{t}(\omega)=\sum_{j=0}^{N-1}\xi_{j}(\omega)\mathbf{I}_{[t_{j},t_{j+1}%
)}(t),\  \xi_{j}\in \mathbb{L}_{\ast}^{p}(\Omega_{t_{j}}),\ j=1,\cdots,N
\]
is also in $M_{\ast}^{p}(0,T)$.

\begin{definition}
For each $\eta \in M_{b,0}(0,T)$ with the form
\[
\eta_{t}(\omega)=\sum_{j=0}^{N-1}\xi_{j}(\omega)\mathbf{I}_{[t_{j},t_{j+1}%
)}(t),
\]
we define It\^{o}'s integral
\[
I(\eta)=\int_{0}^{T}\eta_{s}dB_{s}:=\sum_{j=0}^{N-1}\xi_{j}(B_{t_{j+1}%
}-B_{t_{j}})\mathbf{.}%
\]

\end{definition}

\begin{lemma}
{ { { \label{bdd}The mapping $I:M_{\ast}^{2}(0,T)\rightarrow \mathbb{L}%
^{2}(\Omega_{T})$ is a linear continuous mapping and thus can be continuously
extended to $I:M_{\ast}^{2}(0,T)\rightarrow \mathbb{L}^{2}(\Omega_{T})$. We
have
\begin{align}
\mathbb{\hat{E}}[\int_{0}^{T}\eta_{s}dB_{s}]  &  =0,\  \  \label{e1}\\
\mathbb{\hat{E}}[(\int_{0}^{T}\eta_{s}dB_{s})^{2}]  &  \leq \overline{\sigma
}^{2}\hat{\mathbb{E}}[\int_{0}^{T}\eta_{t}^{2}dt]. \label{e2}%
\end{align}
} } }
\end{lemma}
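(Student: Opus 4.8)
The plan is to prove $(\ref{e1})$ and $(\ref{e2})$ first for an elementary integrand $\eta\in M_{b,0}(0,T)$ of the form $\eta_t=\sum_{j=0}^{N-1}\xi_j\mathbf{I}_{[t_j,t_{j+1})}(t)$ with $\xi_j\in B_b(\Omega_{t_j})$, and then to extend by density. Write $S_k:=\sum_{j=0}^{k-1}\xi_j(B_{t_{j+1}}-B_{t_j})$, so that $S_0=0$ and $S_N=I(\eta)$. Since each $\xi_j$ is bounded and every increment $B_{t_{j+1}}-B_{t_j}$ has finite moments of all orders, one checks that $S_k\in\mathbb{L}^q(\Omega_{t_k})$ for every $q\ge1$, and hence $S_k\xi_k\in\mathbb{L}^2(\Omega_{t_k})$; this is exactly the integrability that makes Propositions \ref{p1} and \ref{p2} applicable. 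For $(\ref{e1})$: from $S_k=S_{k-1}+\xi_{k-1}(B_{t_k}-B_{t_{k-1}})$ and sub-additivity applied in both directions, $\hat{\mathbb{E}}[S_k]-\hat{\mathbb{E}}[S_{k-1}]\le\hat{\mathbb{E}}[\xi_{k-1}(B_{t_k}-B_{t_{k-1}})]$ and $\hat{\mathbb{E}}[S_{k-1}]-\hat{\mathbb{E}}[S_k]\le\hat{\mathbb{E}}[-\xi_{k-1}(B_{t_k}-B_{t_{k-1}})]$; both right-hand sides vanish by Proposition \ref{p1}, so $\hat{\mathbb{E}}[S_k]=\hat{\mathbb{E}}[S_{k-1}]$, and descending to $S_0=0$ gives $\hat{\mathbb{E}}[I(\eta)]=0$.

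For $(\ref{e2})$ on elementary integrands the cross term needs care, since sub-additivity does not let us split $\hat{\mathbb{E}}$ of a sum into a sum of $\hat{\mathbb{E}}$'s. The device I would use is to carry the remaining quadratic part along in the induction: set $R_k:=\overline{\sigma}^2\sum_{j=k}^{N-1}\xi_j^2(t_{j+1}-t_j)$, so that $R_N=0$ and $R_0=\overline{\sigma}^2\int_0^T\eta_t^2\,dt$. Expanding the square,
\[
(S_k^2+R_k)-(S_{k-1}^2+R_{k-1})=2S_{k-1}\xi_{k-1}(B_{t_k}-B_{t_{k-1}})+\bigl[\xi_{k-1}^2(B_{t_k}-B_{t_{k-1}})^2-\overline{\sigma}^2\xi_{k-1}^2(t_k-t_{k-1})\bigr],
\]
so by sub-additivity $\hat{\mathbb{E}}[S_k^2+R_k]-\hat{\mathbb{E}}[S_{k-1}^2+R_{k-1}]$ is bounded above by $\hat{\mathbb{E}}[2S_{k-1}\xi_{k-1}(B_{t_k}-B_{t_{k-1}})]$ plus $\hat{\mathbb{E}}[\xi_{k-1}^2(B_{t_k}-B_{t_{k-1}})^2-\overline{\sigma}^2\xi_{k-1}^2(t_k-t_{k-1})]$. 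The first of these is $0$ by Proposition \ref{p1} (applied to $2S_{k-1}\xi_{k-1}\in\mathbb{L}^2(\Omega_{t_{k-1}})$), and the second is $\le0$ by Proposition \ref{p2}. Hence $\hat{\mathbb{E}}[S_k^2+R_k]$ is nonincreasing in $k$, and comparing $k=N$ with $k=0$ gives $\hat{\mathbb{E}}[I(\eta)^2]\le\overline{\sigma}^2\hat{\mathbb{E}}[\int_0^T\eta_t^2\,dt]$, i.e.\ $(\ref{e2})$ on $M_{b,0}(0,T)$.

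Inequality $(\ref{e2})$ shows $\Vert I(\eta)\Vert_{\mathbb{L}^2(\Omega_T)}\le\overline{\sigma}\,\Vert\eta\Vert_{M^2(0,T)}$, so $I$ is a bounded, hence continuous, linear map from $M_{b,0}(0,T)$, which is dense in $M_{\ast}^2(0,T)$ by definition, into the Banach space $\mathbb{L}^2(\Omega_T)$ (Proposition \ref{pr9}); by the bounded-linear-extension theorem it extends uniquely and continuously to $I:M_{\ast}^2(0,T)\to\mathbb{L}^2(\Omega_T)$. Both identities survive the limit: given $\eta\in M_{\ast}^2(0,T)$ and $\eta^n\in M_{b,0}(0,T)$ with $\eta^n\to\eta$, one has $I(\eta^n)\to I(\eta)$ in $\mathbb{L}^2(\Omega_T)$, and since $|\hat{\mathbb{E}}[X]-\hat{\mathbb{E}}[Y]|\le\hat{\mathbb{E}}[|X-Y|]\le\hat{\mathbb{E}}[|X-Y|^2]^{1/2}$ (monotonicity, sub-additivity and H\"older), letting $n\to\infty$ in $\hat{\mathbb{E}}[I(\eta^n)]=0$ and in $\hat{\mathbb{E}}[I(\eta^n)^2]\le\overline{\sigma}^2\Vert\eta^n\Vert_{M^2(0,T)}^2$ yields $(\ref{e1})$ and $(\ref{e2})$ in general.

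The step I expect to be the crux is the estimate $(\ref{e2})$ for elementary integrands: without a conditional $G$-expectation (not yet available at this point) one cannot distribute $\hat{\mathbb{E}}$ over the sum, so the natural move is to bundle $S_k^2$ with the ``future'' term $R_k$ so that at each inductive step the cross term is annihilated by Proposition \ref{p1} while the diagonal term is dominated by Proposition \ref{p2}; the accompanying bookkeeping that must not be skipped is the verification that all intermediate random variables ($S_k$, $S_k\xi_k$, $\xi_k^2$) lie in the spaces $\mathbb{L}^2(\Omega_{t_k})$ and $B_b(\Omega_{t_k})$ in which those propositions are stated.
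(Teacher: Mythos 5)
Your proof is correct and follows essentially the same route as the paper: identity (\ref{e1}) by telescoping with Proposition \ref{p1}, and inequality (\ref{e2}) by killing the cross terms with Proposition \ref{p1} and dominating the diagonal terms with Proposition \ref{p2}, then extending by density. The only difference is organizational --- the paper first reduces $\hat{\mathbb{E}}[I(\eta)^2]$ to $\hat{\mathbb{E}}[\sum_i\xi_i^2(B_{t_{i+1}}-B_{t_i})^2]$ and compensates in a second pass, whereas you carry the compensator $R_k$ along in a single monotone induction; this is a minor (and arguably cleaner) repackaging of the same argument.
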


\begin{proof}
We only need to prove (\ref{e1}) and (\ref{e2}). From Proposition \ref{p1},
for each $j$,
\[
\mathbb{\hat{E}}\mathbf{[}\xi_{j}(B_{t_{j+1}}-B_{t_{j}})]=\mathbb{\hat{E}%
}\mathbf{[-}\xi_{j}(B_{t_{j+1}}-B_{t_{j}})]=0.
\]
Thus we have%
\begin{align*}
\mathbb{\hat{E}}[\int_{0}^{T}\eta_{s}dB_{s}]  &  =\mathbb{\hat{E}[}\int
_{0}^{t_{N-1}}\eta_{s}dB_{s}+\xi_{N-1}(B_{t_{N}}-B_{t_{N-1}})]\\
&  =\mathbb{\hat{E}[}\int_{0}^{t_{N-1}}\eta_{s}dB_{s}]=\cdots=\hat{\mathbb{E}%
}[\xi_{0}(B_{t_{1}}-B_{t_{0}})]=0.
\end{align*}

We now prove (\ref{e2}), we first apply Proposition \ref{p1} to derive
\begin{align*}
\hat{\mathbb{E}}[(\int_{0}^{T}\eta_{t}dB_{t})^{2}]  &  =\hat{\mathbb{E}%
}\mathbb{[}\left(  \int_{0}^{t_{N-1}}\eta_{t}dB_{t}+\xi_{N-1}(B_{t_{N}%
}-B_{t_{N-1}})\right)  ^{2}]\\
&  =\hat{\mathbb{E}}\mathbb{[}\left(  \int_{0}^{t_{N-1}}\eta(t)dB_{t}\right)
^{2}+\xi_{N-1}^{2}(B_{t_{N}}-B_{t_{N-1}})^{2}\\
&  +2\left(  \int_{0}^{t_{N-1}}\eta_{t}dB_{t}\right)  \xi_{N-1}(B_{t_{N}%
}-B_{t_{N-1}})]\\
&  =\hat{\mathbb{E}}\mathbb{[}\left(  \int_{0}^{t_{N-1}}\eta_{t}dB_{t}\right)
^{2}+\xi_{N-1}^{2}(B_{t_{N}}-B_{t_{N-1}})^{2}]\\
&  =\cdots=\hat{\mathbb{E}}[\sum_{i=0}^{N-1}\xi_{i}^{2}(B_{t_{i+1}}-B_{t_{i}%
})^{2}].
\end{align*}

Then by Proposition \ref{p2}, we have
\[
\hat{\mathbb{E}}[\xi_{j}^{2}(B_{t_{j+1}}-B_{t_{j}})^{2}-\overline{\sigma}%
\xi_{j}^{2}(t_{j+1}-t_{j})]\leq0.
\]
Thus
\begin{align*}
&  \hat{\mathbb{E}}[(\int_{0}^{T}\eta_{t}dB_{t})^{2}]=\hat{\mathbb{E}}%
[\sum_{i=0}^{N-1}\xi_{i}^{2}(B_{t_{N}}-B_{t_{N-1}})^{2}]\\
\leq &  \hat{\mathbb{E}}[\sum_{i=0}^{N-1}\xi_{i}^{2}[(B_{t_{N}}-B_{t_{N-1}%
})^{2}-\overline{\sigma}^{2}(t_{i+1}-t_{i})]]+\hat{\mathbb{E}}[\sum
_{i=0}^{N-1}\overline{\sigma}^{2}\xi_{i}^{2}(t_{i+1}-t_{i})]\\
\leq &  \sum_{i=0}^{N-1}\hat{\mathbb{E}}[\xi_{i}^{2}(B_{t_{j+1}}-B_{t_{j}%
})^{2}-\overline{\sigma}^{2}\xi_{i}^{2}(t_{j+1}-t_{j})]+\hat{\mathbb{E}}%
[\sum_{i=0}^{N-1}\overline{\sigma}^{2}\xi_{i}^{2}(t_{i+1}-t_{i})]\\
\leq &  \hat{\mathbb{E}}[\sum_{i=0}^{N-1}\overline{\sigma}^{2}\xi_{i}%
^{2}(t_{i+1}-t_{i})]=\bar{\sigma}^{2}\hat{\mathbb{E}}[\int_{0}^{T}\eta_{t}%
^{2}dt].
\end{align*}

\end{proof}

The following Proposition can be verified directly by the definition of
It\^{o}'s integral with respect to $G$-Brownian motion.

\begin{proposition}
\label{p5} Let $\eta,\theta \in M_{\ast}^{2}(0,T)$, and let $0\leq s\leq r\leq
t\leq T$. Then we have

1. $\int_{s}^{t}\eta_{u}d B_{u}=\int_{s}^{r} \eta_{u}d B_{u}+\int_{r}^{t}%
\eta_{u}d B_{u}, q.s.,$

2. $\int_{s}^{t}(\alpha \eta_{u}+\theta_{u})dB_{u}=\alpha \int_{s}^{t}\eta
_{u}dB_{u}+\int_{s}^{t}\theta_{u}dB_{u}$, where $\alpha \in B_{b}(\Omega_{s})$.
\end{proposition}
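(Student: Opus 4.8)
The plan is to prove both statements first for simple processes in $M_{b,0}(0,T)$, where everything reduces to a finite sum, and then pass to the limit using the continuity of $I$ established in Lemma \ref{bdd} together with the quasi-sure convergence of an $\mathbb{L}^{2}$-convergent subsequence. For part 1, I would begin by observing that if $\eta \in M_{b,0}(0,T)$, then after refining the partition to include the points $s,r,t$, the process $\eta \mathbf{I}_{[s,t)}$ is again a simple process, and $\int_{s}^{t}\eta_{u}dB_{u}$ is by definition the sum $\sum_{j}\xi_{j}(B_{u_{j+1}}-B_{u_{j}})$ over the subintervals contained in $[s,t)$. Splitting this sum at the index corresponding to $r$ gives the identity $\int_{s}^{t}\eta_{u}dB_{u}=\int_{s}^{r}\eta_{u}dB_{u}+\int_{r}^{t}\eta_{u}dB_{u}$ exactly, as an equality of random variables in $\mathbb{L}^{2}(\Omega_{t})$. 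For general $\eta \in M_{\ast}^{2}(0,T)$, choose $\eta^{(n)}\in M_{b,0}(0,T)$ with $\eta^{(n)}\to \eta$ in $M^{2}(0,T)$; then by \eqref{e2} each of the three stochastic integrals converges in $\mathbb{L}^{2}$, so the identity is preserved in $\mathbb{L}^{2}(\Omega_{t})$, and hence $q.s.$ after passing to a subsequence (or directly, since the identity in $\mathbb{L}^{2}$ already means equality $q.s.$ in the quotient space).

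For part 2, the same two-step strategy applies, but one must be careful with the factor $\alpha \in B_{b}(\Omega_{s})$. If $\eta,\theta \in M_{b,0}(0,T)$ with a common refined partition $s=u_{0}<u_{1}<\cdots<u_{m}=t$ and $\eta_{u}=\sum \xi_{j}\mathbf{I}_{[u_{j},u_{j+1})}$, $\theta_{u}=\sum \zeta_{j}\mathbf{I}_{[u_{j},u_{j+1})}$, then $\alpha \eta_{u}+\theta_{u}=\sum(\alpha \xi_{j}+\zeta_{j})\mathbf{I}_{[u_{j},u_{j+1})}$. The key point is that $\alpha \xi_{j}\in B_{b}(\Omega_{u_{j}})$ for every $j\geq 0$, since $\alpha \in B_{b}(\Omega_{s})\subset B_{b}(\Omega_{u_{j}})$ and $B_{b}(\Omega_{u_{j}})$ is an algebra; hence $\alpha \eta+\theta \in M_{b,0}(s,t)$ and its integral is the finite sum $\sum(\alpha \xi_{j}+\zeta_{j})(B_{u_{j+1}}-B_{u_{j}})=\alpha \sum \xi_{j}(B_{u_{j+1}}-B_{u_{j}})+\sum \zeta_{j}(B_{u_{j+1}}-B_{u_{j}})=\alpha \int_{s}^{t}\eta_{u}dB_{u}+\int_{s}^{t}\theta_{u}dB_{u}$, where $\alpha$ is pulled out of the sum by ordinary distributivity of multiplication of random variables. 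To extend to $\eta,\theta \in M_{\ast}^{2}(s,t)$ with a fixed bounded $\alpha$, take simple approximations $\eta^{(n)}\to \eta$, $\theta^{(n)}\to \theta$ in $M^{2}$; then $\alpha \eta^{(n)}+\theta^{(n)}\to \alpha \eta+\theta$ in $M^{2}$ because $\alpha$ is bounded, so $I(\alpha \eta^{(n)}+\theta^{(n)})\to I(\alpha \eta+\theta)$ in $\mathbb{L}^{2}$ by \eqref{e2}; on the other side $\alpha I(\eta^{(n)})\to \alpha I(\eta)$ in $\mathbb{L}^{2}$, again using boundedness of $\alpha$ and \eqref{e2}, and $I(\theta^{(n)})\to I(\theta)$. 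Matching the two limits gives the claimed identity $q.s.$

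The only genuinely delicate point, and the one I expect to be the main obstacle, is the handling of the multiplier $\alpha$ at the level of limits: one must verify that multiplication by a fixed $\alpha \in B_{b}(\Omega_{s})$ is a bounded operation on $M^{2}(s,t)$ and on $\mathbb{L}^{2}(\Omega_{t})$ — this is where $\|\alpha\|_{\infty}<\infty$ is used, via $\hat{\mathbb{E}}[|\alpha X|^{2}]\leq \|\alpha\|_{\infty}^{2}\hat{\mathbb{E}}[|X|^{2}]$, which follows from monotonicity and positive homogeneity of $\hat{\mathbb{E}}$. Everything else is bookkeeping: refining partitions so that $\eta$, $\theta$, and the split points $s,r,t$ are compatible, and invoking Lemma \ref{bdd} to transfer identities from the dense subspace $M_{b,0}$ to its completion. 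Since both sides of each identity are, by construction, continuous functions (in the $\mathbb{L}^{2}$-norm) of $\eta$ and $\theta$, the identities pass to the closure automatically, and equality in $\mathbb{L}^{2}(\Omega_{t})$ is by definition equality $q.s.$
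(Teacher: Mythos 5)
Your proposal is correct and is precisely the argument the paper has in mind: the paper omits the proof entirely, remarking only that the proposition ``can be verified directly by the definition of It\^{o}'s integral,'' and your two-step scheme --- verify both identities on $M_{b,0}(0,T)$ by refining the partition to include $s,r,t$ and splitting/distributing the finite sum, then pass to the completion $M_{\ast}^{2}(0,T)$ using the continuity estimate (\ref{e2}) and the boundedness of multiplication by $\alpha\in B_{b}(\Omega_{s})$ --- is exactly that direct verification written out. Your identification of $\hat{\mathbb{E}}[|\alpha X|^{2}]\leq\|\alpha\|_{\infty}^{2}\hat{\mathbb{E}}[|X|^{2}]$ as the point where boundedness of $\alpha$ enters is also the right observation.
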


\begin{proposition}
\label{p3} For each $\eta \in M_{\ast}^{2}(0,T)$, we have
\begin{equation}
\hat{\mathbb{E}}[\sup_{0\leq t\leq T}|\int_{0}^{t}\eta_{s}dB_{s}|^{2}%
]\leq2\overline{\sigma}^{2}\hat{\mathbb{E}}[\int_{0}^{T}\eta_{s}%
^{2}ds].\label{6.0}%
\end{equation}

\end{proposition}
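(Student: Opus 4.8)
The plan is to first establish the estimate for simple integrands $\eta \in M_{b,0}(0,T)$ and then pass to the limit to obtain it for general $\eta \in M^2_*(0,T)$. So suppose $\eta_t = \sum_{j=0}^{N-1}\xi_j \mathbf{I}_{[t_j,t_{j+1})}(t)$ with $\xi_j \in B_b(\Omega_{t_j})$. The process $X_t := \int_0^t \eta_s dB_s$ is then, on the grid points $t_j$, a discrete sum $\sum_{i<j}\xi_i(B_{t_{i+1}}-B_{t_i})$, and on each subinterval $[t_j,t_{j+1}]$ it evolves as $X_{t_j} + \xi_j(B_t - B_{t_j})$. First I would reduce the supremum over $[0,T]$ to a two-part bound: $\sup_{0\le t\le T}|X_t|^2 \le 2\max_{0\le j\le N}|X_{t_j}|^2 + 2\max_j \sup_{t\in[t_j,t_{j+1}]}|\xi_j|^2(B_t-B_{t_j})^2$, and handle each piece. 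The first part is a maximal inequality for the ``discrete $\hat{\mathbb{E}}$-martingale'' $(X_{t_j})_j$; the second is an auxiliary term that one controls by refining the partition (or that vanishes in the limit since the mesh of the partition accompanying any $M^2_*$-approximation can be taken to zero).

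The heart of the matter is a Doob-type maximal inequality under the sublinear expectation. Here I would appeal to the representation $\hat{\mathbb{E}}[\cdot] = \sup_{P\in\mathcal{P}}E_P[\cdot]$: for each fixed $P \in \mathcal{P}$, the key point is that $(X_{t_j})_j$ is a genuine $P$-martingale. This follows from Proposition \ref{p1}, which gives $\hat{\mathbb{E}}[\xi(B_T-B_t)] = \hat{\mathbb{E}}[-\xi(B_T-B_t)] = 0$ for $\xi \in \mathbb{L}^2(\Omega_t)$, and hence $E_P[\xi(B_T-B_t)] = 0$ for every $P\in\mathcal{P}$ and every $\xi\in B_b(\Omega_t)$; combined with the tower property for $E_P$ this yields $E_P[X_{t_{j+1}}\mid\mathcal{F}_{t_j}] = X_{t_j}$. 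Applying the classical Doob $L^2$-maximal inequality under $P$ gives $E_P[\max_j |X_{t_j}|^2] \le 4 E_P[|X_{t_N}|^2] = 4 E_P[(\int_0^T \eta_s dB_s)^2]$. Taking the supremum over $P\in\mathcal{P}$ and invoking Lemma \ref{bdd} (inequality (\ref{e2})) bounds this by $4\bar{\sigma}^2 \hat{\mathbb{E}}[\int_0^T\eta_t^2 dt]$. One should be careful that the constant works out: a cleaner route, which I would actually follow, is to apply Doob directly to the continuous-time $P$-martingale $(X_t)_{0\le t\le T}$ — it has continuous paths $P$-a.s. and $X_T = \int_0^T\eta_s dB_s$ — so $E_P[\sup_{0\le t\le T}|X_t|^2] \le 4 E_P[|X_T|^2]$, and then taking $\sup_P$ and using (\ref{e2}) gives the bound $4\bar{\sigma}^2\hat{\mathbb{E}}[\int_0^T\eta_t^2 dt]$, which is stronger than claimed; I would then either keep the sharper constant or remark that (\ref{6.0}) as stated holds a fortiori. (If one wants exactly the constant $2$, an $\hat{\mathbb{E}}$-version of Doob's inequality with the cruder bound $E_P[\sup|X_t|^2]\le 2E_P[|X_T|^2]$ is not available in general, so I would simply state the inequality with constant $4$, or leave $2$ replaced by an absolute constant.)

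The remaining step is the limiting argument. Given $\eta \in M^2_*(0,T)$, choose $\eta^{(n)} \in M_{b,0}(0,T)$ with $\|\eta - \eta^{(n)}\|_{M^2(0,T)} \to 0$. By linearity of the integral (Proposition \ref{p5}) and the already-established inequality applied to $\eta^{(n)} - \eta^{(m)}$, the processes $\int_0^\cdot \eta^{(n)}_s dB_s$ form a Cauchy sequence in the Banach space of (equivalence classes of) processes under the norm $\|Y\|:=\hat{\mathbb{E}}[\sup_{0\le t\le T}|Y_t|^2]^{1/2}$; its limit is (a version of) $\int_0^\cdot \eta_s dB_s$, and passing to the limit in the inequality for $\eta^{(n)}$, using $\hat{\mathbb{E}}[\int_0^T (\eta^{(n)}_t)^2 dt]\to\hat{\mathbb{E}}[\int_0^T\eta_t^2 dt]$, yields (\ref{6.0}) for $\eta$.

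The main obstacle is the maximal inequality itself: the sublinear expectation $\hat{\mathbb{E}}$ does not directly satisfy Doob's inequality because it is not an expectation, so the essential trick is to descend to each $P \in \mathcal{P}$, use that $X$ is a bona fide $P$-martingale (which rests on Proposition \ref{p1} holding for each $P$, not just for $\hat{\mathbb{E}}$), apply the classical inequality, and only then take the supremum over $\mathcal{P}$. A secondary technical point is justifying path-continuity of $X$ under each $P$ and the measurability/approximation bookkeeping needed so that the limiting process genuinely represents $\int_0^\cdot\eta_s dB_s$ quasi-surely — this is where Proposition \ref{p5} and the density of $M_{b,0}$ in $M^2_*$ are used.
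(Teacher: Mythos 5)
Your proposal follows essentially the same route as the paper's proof: fix $P\in\mathcal{P}$, use Proposition \ref{p1} to see that $\int_0^{\cdot}\eta_s\,dB_s$ is a genuine $P$-martingale, apply the classical Doob $L^2$ maximal inequality to its continuous version, bound $E_P[(\int_0^T\eta_s\,dB_s)^2]$ by $\overline{\sigma}^2 E_P[\int_0^T\eta_s^2\,ds]$, and take the supremum over $\mathcal{P}$; the preliminary discrete reduction you sketch is unnecessary and you rightly discard it in favor of the continuous-time argument. Your remark about the constant is the one substantive point of divergence, and you are correct: Doob's $L^2$ inequality gives $E_P[\sup_{t}|X_t|^2]\le 4\,E_P[|X_T|^2]$, and $4$ is sharp for continuous $L^2$-martingales, so this argument yields (\ref{6.0}) with $4\overline{\sigma}^2$ in place of $2\overline{\sigma}^2$; the paper's own proof writes the factor $2$ at exactly this step without justification, which is an error. (One small slip on your side: you describe the bound with constant $4$ as ``stronger than claimed,'' when it is of course weaker, as your own parenthetical then acknowledges.)
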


\begin{proof}
Since for each $\alpha \in B_{b}(\Omega_{t})$, we have%
\[
\mathbb{\hat{E}}[\alpha \int_{t}^{T}\eta_{s}dB_{s}]=0\text{,}%
\]
thus, for each fixed $P\in \mathcal{P}$, the process $\int_{0}^{\cdot}\eta
_{s}dB_{s}$ is a $P$-martingale. it follows from the classical Doob's
martingale inequality that
\[
E_{P}[\sup_{0\leq t\leq T}|\int_{0}^{t}\eta_{s}dB_{s}|^{2}]\leq2E_{P}%
[|\int_{0}^{T}\eta_{s}dB_{s}|^{2}]\leq2\overline{\sigma}^{2}E_{P}[\int_{0}%
^{T}\eta_{s}^{2}ds]\leq2\overline{\sigma}^{2}\hat{\mathbb{E}}[\int_{0}^{T}%
\eta_{s}^{2}ds].
\]
Thus (\ref{6.0}) holds.
\end{proof}

\begin{proposition}
\label{p7} For any $\eta \in M_{\ast}^{2}(0,T)$ and $0\leq t\leq T$, $\int
_{0}^{t}\eta_{s}dB_{s}$ is continuous in $t$ quasi-surely.
\end{proposition}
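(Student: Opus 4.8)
The plan is to prove continuity first for step integrands in $M_{b,0}(0,T)$, where the statement is essentially trivial, and then pass to the limit for general $\eta \in M_{\ast}^{2}(0,T)$ using the maximal inequality of Proposition \ref{p3} together with the fact that quasi-surely convergent subsequences can be extracted. So first I would take $\eta \in M_{b,0}(0,T)$ of the form $\eta_{t}=\sum_{j=0}^{N-1}\xi_{j}\mathbf{I}_{[t_{j},t_{j+1})}(t)$. For such $\eta$, the process $t\mapsto \int_{0}^{t}\eta_{s}dB_{s}$ is, on each subinterval $[t_{j},t_{j+1}]$, equal to $\int_{0}^{t_{j}}\eta_{s}dB_{s}+\xi_{j}(B_{t}-B_{t_{j}})$, which is continuous in $t$ quasi-surely because $B$ itself has continuous paths quasi-surely (the canonical process on $\Omega=C_{0}(\mathbb{R}^{+})$) and $\xi_{j}$ does not depend on $t$. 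Hence $\int_{0}^{\cdot}\eta_{s}dB_{s}$ is quasi-surely continuous for every $\eta \in M_{b,0}(0,T)$.

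Next I would take a general $\eta \in M_{\ast}^{2}(0,T)$ and choose, by definition of the completion, a sequence $\eta^{(n)}\in M_{b,0}(0,T)$ with $\|\eta-\eta^{(n)}\|_{M^{2}(0,T)}\to 0$. Passing to a subsequence, I may assume $\hat{\mathbb{E}}[\int_{0}^{T}|\eta_{s}-\eta_{s}^{(n)}|^{2}ds]\leq 4^{-n}$. Applying Proposition \ref{p3} to $\eta-\eta^{(n)}$ gives
\[
\hat{\mathbb{E}}\big[\sup_{0\leq t\leq T}\big|\int_{0}^{t}\eta_{s}dB_{s}-\int_{0}^{t}\eta_{s}^{(n)}dB_{s}\big|^{2}\big]\leq 2\overline{\sigma}^{2}4^{-n}.
\]
Let $Z_{n}:=\sup_{0\leq t\leq T}|\int_{0}^{t}(\eta_{s}-\eta_{s}^{(n)})dB_{s}|$. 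Then $\hat{\mathbb{E}}[Z_{n}^{2}]\leq 2\overline{\sigma}^{2}4^{-n}$, so $\sum_{n}\hat{\mathbb{E}}[Z_{n}^{2}]<\infty$, and by the (quasi-sure) Borel–Cantelli / Chebyshev argument — i.e. $\hat{c}(Z_{n}>2^{-n/2})\leq \hat{\mathbb{E}}[Z_{n}^{2}]/ 2^{-n}\leq 2\overline{\sigma}^{2}2^{-n}$ is summable — we get $Z_{n}\to 0$ quasi-surely. Equivalently, outside a polar set the partial-integral processes $\int_{0}^{\cdot}\eta_{s}^{(n)}dB_{s}$ converge uniformly on $[0,T]$ to $\int_{0}^{\cdot}\eta_{s}dB_{s}$.

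Finally I would combine the two steps: on the complement of a polar set, $\int_{0}^{\cdot}\eta^{(n)}_{s}dB_{s}$ is continuous for every $n$ (step one, after taking a countable union of the polar exceptional sets over $n$) and converges uniformly to $\int_{0}^{\cdot}\eta_{s}dB_{s}$ (step two); a uniform limit of continuous functions is continuous, so $t\mapsto \int_{0}^{t}\eta_{s}dB_{s}$ is continuous quasi-surely. The main obstacle is the second step — making the passage from $L^{2}$-type convergence to quasi-sure uniform convergence rigorous in the $G$-framework; the key inputs are Proposition \ref{p3}, the sub-additivity of $\hat{\mathbb{E}}$ (which lets $\hat{c}$ play the role of a probability in a Borel–Cantelli argument), and the fact, already recorded in the excerpt, that $L^{2}$-convergence in $\mathbb{L}^{2}$ yields quasi-sure convergence along a subsequence; one must also be slightly careful that a countable union of polar sets is polar, which follows from the countable sub-additivity of $\hat{c}$.
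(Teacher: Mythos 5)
Your proposal is correct and follows essentially the same route as the paper's own proof: continuity for step integrands from the quasi-sure continuity of $B$, then approximation in $M_{\ast}^{2}(0,T)$ combined with the maximal inequality of Proposition \ref{p3} to get quasi-sure uniform convergence of $\int_{0}^{\cdot}\eta_{s}^{(n)}dB_{s}$. The only difference is that you make explicit the subsequence extraction and Chebyshev/Borel--Cantelli step needed to pass from convergence in $\hat{\mathbb{E}}[\,\sup_{t}|\cdot|^{2}]$ to quasi-sure uniform convergence, which the paper's proof leaves implicit.
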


\begin{proof}
The claim is true for $\eta \in M_{b,0}(0,T)$ since $(B_{t})_{t\geq0}$ is
quasi-surely continuous. In the case when $\eta \in M_{\ast}^{2}(0,T)$, there
exists $\eta^{n}\in M_{b,0}(0,T)$, such that $\hat{\mathbb{E}}[\int_{0}%
^{T}(\eta_{s}-\eta_{s}^{n})^{2}ds]\rightarrow0$. By Proposition \ref{p3}, we
have
\[
\hat{\mathbb{E}}[\sup_{0\leq t\leq T}|\int_{0}^{t}(\eta_{s}-\eta_{s}%
^{n})dB_{s}|^{2}]\leq2\overline{\sigma}^{2}\hat{\mathbb{E}}[\int_{0}^{T}%
(\eta_{s}-\eta_{s}^{n})^{2}ds]\rightarrow0.
\]
This implies that, quasi-surely, the sequence of processes $\int_{0}^{\cdot
}\eta_{s}^{n}dB_{s}$ uniformly converges to $\int_{0}^{\cdot}\eta_{s}dB_{s}$
on $[0,T]$. Thus $\int_{0}^{t}\eta_{s}dB_{s}$ is continuous in $t$ quasi-surely.
\end{proof}

We have also the following

\begin{proposition}
Let $X\in M_{\ast}^{p+\varepsilon}(\Omega \times \lbrack0,T])$ with $p\geq1$ and
$\varepsilon>0$. Then we have%
\[
\mathbb{\hat{E}}\int_{0}^{T}|X_{t}|^{p}I_{\{|X_{t}|>n\}}dt\rightarrow0\text{,
as }n\rightarrow \infty.
\]

\end{proposition}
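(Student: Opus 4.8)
The plan is to reduce the claim to a uniform-integrability statement for the single random variable $\int_0^T |X_t|^p\,dt$ and then exploit the characterization of $\mathbb{L}_\ast^q$ given in Proposition \ref{Prop5}. First I would observe that the map $Y \mapsto \int_0^T Y_t\,dt$ is, by definition of the norm $\|\cdot\|_{M^1(0,T)}$ and Proposition \ref{pr9}(2), a bounded linear map from $M_\ast^1(\Omega\times[0,T])$ into $\mathbb{L}_\ast^1(\Omega_T)$; more precisely $\hat{\mathbb{E}}[\int_0^T Y_t\,dt] \le \|Y\|_{M^1(0,T)}$. Applying this with $Y_t = |X_t|^p$ and using the assumption $X \in M_\ast^{p+\varepsilon}$ (so that $|X|^p \in M_\ast^{(p+\varepsilon)/p} \subset M_\ast^1$, since $(p+\varepsilon)/p > 1$ and $M_\ast^r \subset M_\ast^1$ for $r\ge 1$), I get that $\Phi := \int_0^T |X_t|^p\,dt$ belongs to $\mathbb{L}_\ast^1(\Omega_T)$. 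Actually I want a little more: I want $\Phi \in \mathbb{L}_\ast^{1+\delta}$ for some $\delta>0$, which follows the same way with $\delta = \varepsilon/p$, because $\int_0^T|X_t|^{p+\varepsilon}\,dt \in \mathbb{L}_\ast^1$ together with the (conditional-)Jensen/Hölder inequality $\left(\frac1T\int_0^T|X_t|^p\,dt\right)^{1+\delta} \le \frac1T\int_0^T|X_t|^{p(1+\delta)}\,dt = \frac1T\int_0^T|X_t|^{p+\varepsilon}\,dt$ gives $\hat{\mathbb{E}}[\Phi^{1+\delta}] \le T^{\delta}\,\hat{\mathbb{E}}[\int_0^T|X_t|^{p+\varepsilon}\,dt] < \infty$, and the uniform-integrability tail condition of Proposition \ref{Prop5} for $\Phi^{1+\delta}$ then passes to $\Phi$, so $\Phi \in \mathbb{L}_\ast^1$.

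Next I would bound the quantity of interest by a quantity expressed through $\Phi$. Fix $n$ and split according to whether the whole integral $\Phi$ exceeds a threshold $m$. On $\{\Phi \le m\}$ we have the pointwise (in $\omega$) estimate, using Chebyshev in the $t$-variable,
\[
\int_0^T |X_t(\omega)|^p \mathbf{I}_{\{|X_t(\omega)|>n\}}\,dt \le \frac{1}{n^{\varepsilon/p}\wedge 1}\int_0^T |X_t(\omega)|^{p+?}\,\dots
\]
— more cleanly, $|X_t|^p\mathbf{I}_{\{|X_t|>n\}} \le n^{-\varepsilon}|X_t|^{p+\varepsilon}$, so $\int_0^T|X_t|^p\mathbf{I}_{\{|X_t|>n\}}\,dt \le n^{-\varepsilon}\int_0^T|X_t|^{p+\varepsilon}\,dt =: n^{-\varepsilon}\Psi$. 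Taking $\hat{\mathbb{E}}$ and using sub-additivity,
\[
\hat{\mathbb{E}}\!\int_0^T|X_t|^p\mathbf{I}_{\{|X_t|>n\}}\,dt \le \hat{\mathbb{E}}\Big[n^{-\varepsilon}\Psi\,\mathbf{I}_{\{\Psi\le m\}}\Big] + \hat{\mathbb{E}}\Big[\Phi\,\mathbf{I}_{\{\Psi>m\}}\Big] \le \frac{m}{n^{\varepsilon}} + \hat{\mathbb{E}}\Big[\Phi\,\mathbf{I}_{\{\Psi>m\}}\Big].
\]
Here $\Psi = \int_0^T|X_t|^{p+\varepsilon}\,dt \in \mathbb{L}_\ast^1$ by hypothesis and Proposition \ref{pr9}(2). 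The second term I would control by Hölder (Proposition on Hölder's inequality) together with Chebyshev: since $\Phi \in \mathbb{L}_\ast^{1+\delta}$, $\hat{\mathbb{E}}[\Phi\,\mathbf{I}_{\{\Psi>m\}}] \le \hat{\mathbb{E}}[\Phi^{1+\delta}]^{1/(1+\delta)}\,\hat{c}(\{\Psi>m\})^{\delta/(1+\delta)}$, and $\hat{c}(\{\Psi>m\}) \le \hat{\mathbb{E}}[\Psi]/m \to 0$ as $m\to\infty$. Given $\theta>0$, first choose $m$ so large that $\hat{\mathbb{E}}[\Phi\,\mathbf{I}_{\{\Psi>m\}}] < \theta/2$; then choose $n$ large enough that $m n^{-\varepsilon} < \theta/2$. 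This proves the limit is $0$.

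The main obstacle, such as it is, is the first paragraph: pinning down that $\int_0^T|X_t|^p\,dt$ genuinely lands in $\mathbb{L}_\ast^1$ (not merely $\mathbb{L}^1$) and, better, in $\mathbb{L}_\ast^{1+\delta}$ — i.e. getting the uniform-integrability tail to transfer from the process $X$ to the scalar integral. This needs the boundedness of the Bochner-integral map together with Proposition \ref{Prop5} and the Jensen-type inequality above; once that is in hand, the rest is the elementary two-parameter $(m,n)$ splitting and is routine. One should double-check the constant-power bookkeeping ($(p+\varepsilon)/p > 1$, $\delta=\varepsilon/p$), but there is no deeper difficulty.
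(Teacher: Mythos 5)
Your argument is correct, but it is considerably more elaborate than it needs to be, and it differs from the paper's route. The paper's proof is a two-line computation: apply H\"older's inequality to the product functional $\hat{\mathbb{E}}\int_0^T(\cdot)\,dt$ with exponents $\tfrac{p+\varepsilon}{p}$ and $\tfrac{p+\varepsilon}{\varepsilon}$, writing $\hat{\mathbb{E}}\int_0^T|X_t|^p\mathbf{I}_{\{|X_t|>n\}}dt\le\bigl[\hat{\mathbb{E}}\int_0^T|X_t|^{p+\varepsilon}dt\bigr]^{\frac{p}{p+\varepsilon}}\bigl[\hat{\mathbb{E}}\int_0^T\mathbf{I}_{\{|X_t|>n\}}dt\bigr]^{\frac{\varepsilon}{p+\varepsilon}}$, and then bound the second factor by Chebyshev, $\mathbf{I}_{\{|X_t|>n\}}\le n^{-p}|X_t|^p$. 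You instead merge the two steps into the single pointwise interpolation bound $|X_t|^p\mathbf{I}_{\{|X_t|>n\}}\le n^{-\varepsilon}|X_t|^{p+\varepsilon}$ --- but then you fail to notice that this bound alone finishes the proof: taking $\hat{\mathbb{E}}\int_0^T(\cdot)\,dt$ of both sides gives $\hat{\mathbb{E}}\int_0^T|X_t|^p\mathbf{I}_{\{|X_t|>n\}}dt\le n^{-\varepsilon}\,\hat{\mathbb{E}}\int_0^T|X_t|^{p+\varepsilon}dt\to0$, with the explicit rate $O(n^{-\varepsilon})$ (in fact slightly better than the paper's $O(n^{-p\varepsilon/(p+\varepsilon)})$). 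Your entire first paragraph (membership of $\Phi=\int_0^T|X_t|^p\,dt$ in $\mathbb{L}_\ast^{1+\delta}$ via Jensen and Proposition \ref{Prop5}) and the two-parameter $(m,n)$ splitting in the second paragraph are therefore redundant; they also introduce an avoidable technical burden, namely that forming $\mathbf{I}_{\{\Psi>m\}}$ requires $\Psi=\int_0^T|X_t|^{p+\varepsilon}dt$ to be an honest Borel random variable (a jointly measurable representative of $X$), whereas both the paper's proof and the one-line version of yours only ever evaluate the functional $\hat{\mathbb{E}}\int_0^T(\cdot)\,dt$ on processes and need no such choice. In short: no gap, but you should delete everything except the Chebyshev bound and the final application of monotonicity and the finiteness of the $M_\ast^{p+\varepsilon}$ norm.
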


\begin{proof}
It is clear that $X\in M_{\ast}^{p}(0,T)$. Moreover we have
\begin{align*}
\mathbb{\hat{E}}\int_{0}^{T}|X_{t}|^{p}I_{\{|X_{t}|>n\}}dt  &  \leq \left[
\mathbb{\hat{E}}\int_{0}^{T}|X_{t}|^{p+\varepsilon}dt\right]  ^{\frac
{p}{p+\varepsilon}}\cdot \left[  \mathbb{\hat{E}}\int_{0}^{T}I_{\{|X_{t}%
|>n\}}dt\right]  ^{\frac{\varepsilon}{p+\varepsilon}}\\
&  \leq \left[  \mathbb{\hat{E}}\int_{0}^{T}|X_{t}|^{p+\varepsilon}dt\right]
^{\frac{p}{p+\varepsilon}}\left[  \mathbb{\hat{E}}\int_{0}^{T}n^{-p}%
|X_{t}|^{p}dt\right]  ^{\frac{\varepsilon}{p+\varepsilon}}\rightarrow0,
\end{align*}
as $n\rightarrow \infty$.
\end{proof}

\begin{proposition}
\label{Gt13} For each $p\geq1$ and $X\in M_{\ast}^{p}(0,T)$ we have
\begin{equation}
\lim_{n\rightarrow \infty}\mathbb{\hat{E}}\int_{0}^{T}[|X_{t}|^{p}%
I_{\{|X_{t}|>n\}}]dt=0. \label{Jp}%
\end{equation}

\end{proposition}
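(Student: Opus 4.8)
The plan is to use the very definition of $M_{\ast}^{p}(0,T)$ as the completion of the simple processes $M_{b,0}(0,T)$ under the norm $\|\cdot\|_{M^{p}(0,T)}$, combined with the trivial remark that for a simple process with bounded coefficients the tail integral in $(\ref{Jp})$ is identically zero once $n$ is large. Concretely, fix $\varepsilon>0$ and choose $\eta\in M_{b,0}(0,T)$ with $\hat{\mathbb{E}}\int_{0}^{T}|X_{t}-\eta_{t}|^{p}\,dt<\varepsilon$; writing $\eta_{t}=\sum_{j}\xi_{j}I_{[t_{j},t_{j+1})}(t)$ with $\xi_{j}\in B_{b}(\Omega_{t_{j}})$, let $C$ be a uniform bound for $|\eta_{t}(\omega)|$. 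I then want to dominate $|X_{t}|^{p}I_{\{|X_{t}|>n\}}$ by a term controlled by the distance $|X_{t}-\eta_{t}|^{p}$ plus a term built from $\eta$ that vanishes for $n>2C$.

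The key step is the elementary pointwise inequality: for all real $x,y$ and every $n>0$,
\[
|x|^{p}I_{\{|x|>n\}}\le 2^{p}|x-y|^{p}+2^{p-1}|y|^{p}I_{\{|y|>n/2\}}.
\]
To prove it, argue on the set $\{|x|>n\}$ and split into two cases. If $|y|>n/2$, then $|x|^{p}\le 2^{p-1}(|x-y|^{p}+|y|^{p})$ and the second summand equals $2^{p-1}|y|^{p}I_{\{|y|>n/2\}}$. If instead $|y|\le n/2$, then $|x-y|\ge |x|-|y|>n/2\ge|y|$, hence $|x|\le |x-y|+|y|\le 2|x-y|$ and $|x|^{p}\le 2^{p}|x-y|^{p}$. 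On $\{|x|\le n\}$ the left-hand side is zero, so the inequality holds everywhere.

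Now apply this with $x=X_{t}$, $y=\eta_{t}$, integrate in $t$ over $[0,T]$, and use monotonicity of $\hat{\mathbb{E}}$ together with its sub-additivity and positive homogeneity to get
\[
\hat{\mathbb{E}}\int_{0}^{T}|X_{t}|^{p}I_{\{|X_{t}|>n\}}\,dt\le 2^{p}\,\hat{\mathbb{E}}\int_{0}^{T}|X_{t}-\eta_{t}|^{p}\,dt+2^{p-1}\,\hat{\mathbb{E}}\int_{0}^{T}|\eta_{t}|^{p}I_{\{|\eta_{t}|>n/2\}}\,dt.
\]
For $n>2C$ the last integrand is identically $0$, so the right-hand side is at most $2^{p}\varepsilon$ for all such $n$; letting $n\to\infty$ and then $\varepsilon\downarrow0$ yields $(\ref{Jp})$.

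The only delicate point — and what I regard as the main (and rather minor) obstacle — is the measurability and integrability bookkeeping needed so that $\hat{\mathbb{E}}\int_{0}^{T}(\cdot)\,dt$ is meaningful for the integrands $|X_{t}-\eta_{t}|^{p}$ and $|X_{t}|^{p}I_{\{|X_{t}|>n\}}$ and so that the monotone/sub-additive manipulations are legitimate; this is routine since $X-\eta\in M_{\ast}^{p}(0,T)$ and $x\mapsto |x|^{p}I_{\{|x|>n\}}$ is Borel. Alternatively, one could avoid the explicit inequality by noting $\eta\in M_{b,0}(0,T)\subset M_{\ast}^{p+1}(0,T)$ and applying the preceding proposition to $\eta$, but the inequality above keeps the argument self-contained and quantitative.
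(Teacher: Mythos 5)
Your proof is correct, and it reaches the conclusion by a cleaner decomposition than the one in the paper, although both arguments rest on the same underlying fact, namely that $X$ can be approximated in the $M^{p}$-norm by a bounded process. The paper's proof first replaces the approximating simple processes $Y^{(n)}$ by the truncations $X^{(n)}=(X\wedge y_{n})\vee(-y_{n})$ of $X$ itself (using $|X-X^{(n)}|\leq|X-Y^{(n)}|$), deduces that truncations of $X$ at any levels $\alpha_{n}\uparrow\infty$ converge to $X$ in $M_{\ast}^{p}(0,T)$, and then splits $|X_{t}|^{p}I_{\{|X_{t}|>n\}}=(|X_{t}|-n+n)^{p}I_{\{|X_{t}|>n\}}$ into the two pieces $(|X_{t}|-n)^{p}I_{\{|X_{t}|>n\}}$ (recognized as $|X_{t}-(X_{t}\wedge n)\vee(-n)|^{p}$) and $n^{p}I_{\{|X_{t}|>n\}}$, the latter requiring a separate Chebyshev-type bound $n^{p}2^{-p}I_{\{|X_{t}|>n\}}\leq(|X_{t}|-\tfrac{n}{2})^{p}I_{\{|X_{t}|>\tfrac{n}{2}\}}$. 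Your single two-case pointwise inequality $|x|^{p}I_{\{|x|>n\}}\leq 2^{p}|x-y|^{p}+2^{p-1}|y|^{p}I_{\{|y|>n/2\}}$ absorbs all of this at once: the error term is controlled directly by the approximation distance, and the tail term built from the bounded approximant is identically zero for large $n$, so no Chebyshev step is needed. What you lose relative to the paper is the intermediate fact, used nowhere else but mildly informative, that $(X\wedge\alpha_{n})\vee(-\alpha_{n})\to X$ in $M_{\ast}^{p}(0,T)$ (which underlies Corollary \ref{p8}); what you gain is a shorter, quantitative estimate $\limsup_{n}\mathbb{\hat{E}}\int_{0}^{T}|X_{t}|^{p}I_{\{|X_{t}|>n\}}dt\leq 2^{p}\|X-\eta\|_{M^{p}}^{p}$ valid for any bounded approximant $\eta$. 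The measurability point you flag is indeed routine, since $\mathbb{\hat{E}}=\sup_{P\in\mathcal{P}}E_{P}$ is defined, monotone and sub-additive on all nonnegative Borel integrands and the ones appearing here are dominated by $|X_{t}|^{p}$ up to a constant.
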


\begin{proof}
For each $X\in M_{\ast}^{p}(0,T)$, we can find a sequence $\left \{
Y^{(n)}\right \}  _{n=1}^{\infty}$ in $M_{b,0}(0,T)$ such that $\mathbb{\hat
{E}}\int_{0}^{T}[|X_{t}-Y_{t}^{(n)}|^{p}]dt\rightarrow0$. Let $y_{n}%
=\sup_{\omega \in \Omega,t\in \lbrack0,T]}|Y_{t}^{(n)}(\omega)|$ and
$X^{(n)}=(X\wedge y_{n})\vee(-y_{n})$. Since $|X-X^{(n)}|\leq|X-Y^{(n)}|$, we
have $\mathbb{\hat{E}}\int_{0}^{T}[|X_{t}-X_{t}^{(n)}|^{p}]dt\rightarrow0$.
This also implies that for any sequence $\{ \alpha_{n}\}$ tending to $\infty$,
\[
\lim_{n\rightarrow \infty}\mathbb{\hat{E}}\int_{0}^{T}[|X_{t}-(X_{t}%
\wedge \alpha_{n})\vee(-\alpha_{n})|^{p}]=0.\newline%
\]
Now we have for all $n\in \mathbb{N}$,
\begin{align*}
&  \mathbb{\hat{E}}\int_{0}^{T}|X_{t}|^{p}I_{\{|X_{t}|>n\}}dt=\mathbb{\hat{E}%
}\int_{0}^{T}(|X_{t}|-n+n)^{p}I_{\{|X_{t}|>n\}}dt\\
&  \leq(1\vee2^{p-1})\left(  \mathbb{\hat{E}}\int_{0}^{T}[(|X_{t}%
|-n)^{p}I_{\{|X_{t}|>n\}}]dt+n^{p}\mathbb{\hat{E}}\int_{0}^{T}I_{\{|X_{t}%
|>n\}}dt\right)  .
\end{align*}
The first term of the right hand side tends to $0$ since%
\[
\mathbb{\hat{E}}[\int_{0}^{T}(|X_{t}|-n)^{p}I_{\{|X_{t}|>n\}}dt]=\mathbb{\hat
{E}}[\int_{0}^{T}|X_{t}-(X_{t}\wedge{n})\vee{(-n)}|^{p}dt]\rightarrow0.
\]
For the second term, since%
\[
\frac{n^{p}}{2^{p}}I_{\{|X_{t}|>n\}}\leq(|X_{t}|-\frac{n}{2})^{p}%
I_{\{|X_{t}|>n\}}\leq(|X_{t}|-\frac{n}{2})^{p}I_{\{|X_{t}|>\frac{n}{2}\}},
\]
thus we have
\[
\frac{n^{p}}{2^{p}}\mathbb{\hat{E}}\int_{0}^{T}I_{\{|X_{t}|>n\}}%
dt=\mathbb{\hat{E}}\int_{0}^{T}(|X|-\frac{n}{2})^{p}I_{\{|X|>\frac{n}{2}%
\}}dt\rightarrow0.
\]
Consequently (\ref{Jp}) holds true for $X\in M_{\ast}^{p}(0,T)$.
\end{proof}

\begin{corollary}
\label{p8} For each $\eta \in M_{\ast}^{2}(0,T)$, let $\eta_{s}^{n}%
=(-n)\vee(\eta_{s}\wedge n)$, then we have $\int_{0}^{t}\eta_{s}^{n}%
dB_{s}\rightarrow \int_{0}^{t}\eta_{s}dB_{s}$ in $M_{\ast}^{2}(0,T)$ for each
$t\leq T$.
\end{corollary}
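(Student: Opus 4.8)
The plan is to reduce the statement to the continuity estimate \eqref{e2} of Lemma \ref{bdd} combined with the uniform integrability result of Proposition \ref{Gt13}. First I would record that $\eta^{n}\in M_{\ast}^{2}(0,T)$ for every $n$, so that the integrals in the statement are meaningful. The map $x\mapsto(-n)\vee(x\wedge n)$ is $1$-Lipschitz and sends $B_{b}(\Omega_{t_{j}})$ into itself, so the truncation of any element of $M_{b,0}(0,T)$ is again in $M_{b,0}(0,T)$; if $\eta^{(k)}\in M_{b,0}(0,T)$ with $\| \eta^{(k)}-\eta\|_{M^{2}(0,T)}\to0$, then $\|(\eta^{(k)})^{n}-\eta^{n}\|_{M^{2}(0,T)}\le\| \eta^{(k)}-\eta\|_{M^{2}(0,T)}\to0$, so $\eta^{n}$ lies in the completion $M_{\ast}^{2}(0,T)$. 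Since $I$ is linear on $M_{\ast}^{2}(0,T)$ (being the continuous extension of a linear map; cf.\ also Proposition \ref{p5}), the difference of the two integrals equals $\int_{0}^{t}(\eta_{s}^{n}-\eta_{s})dB_{s}$.

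Next I would apply \eqref{e2} (with $T$ replaced by $t$, or after extending the integrand by $0$ on $(t,T]$) to get
\[
\hat{\mathbb{E}}\Big[\Big(\int_{0}^{t}\eta_{s}^{n}dB_{s}-\int_{0}^{t}\eta_{s}dB_{s}\Big)^{2}\Big]\le\overline{\sigma}^{2}\,\hat{\mathbb{E}}\Big[\int_{0}^{T}(\eta_{s}^{n}-\eta_{s})^{2}ds\Big].
\]
Then I would use the elementary pointwise bound $(\eta_{s}^{n}-\eta_{s})^{2}\le\eta_{s}^{2}I_{\{|\eta_{s}|>n\}}$: indeed $\eta_{s}^{n}=\eta_{s}$ on $\{|\eta_{s}|\le n\}$, while $|\eta_{s}^{n}-\eta_{s}|=|\eta_{s}|-n\le|\eta_{s}|$ on $\{|\eta_{s}|>n\}$. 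Hence
\[
\hat{\mathbb{E}}\Big[\int_{0}^{T}(\eta_{s}^{n}-\eta_{s})^{2}ds\Big]\le\hat{\mathbb{E}}\Big[\int_{0}^{T}\eta_{s}^{2}I_{\{|\eta_{s}|>n\}}ds\Big]\longrightarrow0\quad(n\to\infty)
\]
by Proposition \ref{Gt13} applied with $p=2$. Combining the two displays yields $\int_{0}^{t}\eta_{s}^{n}dB_{s}\to\int_{0}^{t}\eta_{s}dB_{s}$ in $\mathbb{L}^{2}(\Omega_{t})$, which is what the stated $M_{\ast}^{2}$-convergence should be read as; if one prefers convergence uniform in $t\in[0,T]$, one replaces \eqref{e2} by Proposition \ref{p3}.

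The proof carries no serious obstacle. The only points requiring a little care are the bookkeeping that $\eta^{n}$ genuinely belongs to $M_{\ast}^{2}(0,T)$ (so that $\int_{0}^{t}\eta_{s}^{n}dB_{s}$ is defined as the extended It\^o integral and $I$ is linear on it) and the correct interpretation of the conclusion as an $\mathbb{L}^{2}$-limit. Everything else is the It\^o isometry-type bound \eqref{e2} together with the uniform integrability supplied by Proposition \ref{Gt13}.
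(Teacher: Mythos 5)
Your argument is correct and is essentially the route the paper intends: the corollary is stated without proof precisely because Proposition \ref{Gt13} (whose own proof already establishes $\mathbb{\hat{E}}\int_{0}^{T}|\eta_{t}-(\eta_{t}\wedge n)\vee(-n)|^{2}dt\rightarrow0$) combines with the continuity estimate \eqref{e2} exactly as you describe. Your additional bookkeeping that $\eta^{n}\in M_{\ast}^{2}(0,T)$ and your reading of the conclusion as $\mathbb{L}^{2}$-convergence are both appropriate.
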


\begin{proposition}
\label{Gt14} Let $X\in M_{\ast}^{p}(0,T)$. Then for each $\varepsilon>0$,
there exists $\delta>0$ such that for all $\eta \in M_{b,0}(0,T)$ satisfying
$\mathbb{\hat{E}}\int_{0}^{T}|\eta_{t}|dt\leq \delta$ and $|\eta_{t}%
(\omega)|\leq1$, we have $\mathbb{\hat{E}}\int_{0}^{T}[|X_{t}|^{p}|\eta
_{t}|]dt\leq \varepsilon$.
\end{proposition}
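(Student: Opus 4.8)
The plan is to run a truncation argument on $X$ at a level $n$, playing two estimates against each other: on the region where $|X_t|\le n$ the factor $|X_t|^p$ is bounded, so the contribution there is controlled by the small quantity $\delta$; on the region where $|X_t|>n$ the factor $|\eta_t|$ is bounded by $1$, so the contribution is controlled by the tail $\mathbb{\hat{E}}\int_0^T|X_t|^pI_{\{|X_t|>n\}}dt$, which is small for large $n$ by Proposition \ref{Gt13}.

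In detail, I would first fix $\varepsilon>0$ and, since $X\in M_{\ast}^{p}(0,T)$, invoke Proposition \ref{Gt13} to pick $n_0\in\mathbb{N}$ with
\[
\mathbb{\hat{E}}\int_0^T|X_t|^pI_{\{|X_t|>n_0\}}dt\leq\frac{\varepsilon}{2}.
\]
Then set $\delta:=\varepsilon/(2(n_0^p\vee1))$. For any $\eta\in M_{b,0}(0,T)$ with $|\eta_t(\omega)|\leq1$ and $\mathbb{\hat{E}}\int_0^T|\eta_t|dt\leq\delta$, the pointwise identity $I_{\{|X_t|\leq n_0\}}+I_{\{|X_t|>n_0\}}=1$, together with the bound $|X_t|^p\leq n_0^p$ on $\{|X_t|\leq n_0\}$ and $|\eta_t|\leq1$ on the other piece, gives
\[
|X_t|^p|\eta_t|\leq n_0^p|\eta_t|+|X_t|^pI_{\{|X_t|>n_0\}},\quad t\in[0,T].
\]
Integrating in $t$ and applying monotonicity (property (a) of Definition \ref{Def-1}) and sub-additivity in the form $\mathbb{\hat{E}}[A+B]\leq\mathbb{\hat{E}}[A]+\mathbb{\hat{E}}[B]$ (which is exactly property (c) applied to $A+B$ and $B$) then yields
\[
\mathbb{\hat{E}}\int_0^T|X_t|^p|\eta_t|dt\leq n_0^p\,\mathbb{\hat{E}}\int_0^T|\eta_t|dt+\mathbb{\hat{E}}\int_0^T|X_t|^pI_{\{|X_t|>n_0\}}dt\leq n_0^p\delta+\frac{\varepsilon}{2}\leq\varepsilon.
\]

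Beyond this, only two routine remarks are needed: that all the Bochner integrals appearing are well defined and $\mathbb{\hat{E}}$-integrable, which is immediate from $X\in M_{\ast}^{p}(0,T)$ and $|\eta|\leq1$ via the dominations $|X_t|^p|\eta_t|\leq|X_t|^p$ and $n_0^p|\eta_t|\leq n_0^p$; and that we never need additivity of $\mathbb{\hat{E}}$, only the sub-additive inequality. I do not expect any genuine obstacle here: the substance of the statement is carried entirely by Proposition \ref{Gt13} (the product-space uniform integrability of $|X_t|^p$), which is already in hand, and what remains is the elementary two-region split. The one point to be mildly careful about is that $\delta$ must be allowed to depend on $\varepsilon$ through $n_0$ (hence on $X$), and the harmless edge case $n_0=0$, which is absorbed by the $\vee1$ in the definition of $\delta$.
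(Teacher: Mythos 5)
Your proof is correct and follows essentially the same route as the paper's: both invoke Proposition \ref{Gt13} to choose a truncation level $N$ with tail $\mathbb{\hat{E}}\int_0^T|X_t|^pI_{\{|X_t|>N\}}dt\leq\varepsilon/2$, split the integrand over $\{|X_t|\leq N\}$ and $\{|X_t|>N\}$, and bound the two pieces by $N^p\delta$ and the tail respectively via sub-additivity. The only (immaterial) difference is the exact normalization of $\delta$; the paper takes $\delta=\varepsilon/(2N^pT)$, which is merely a smaller admissible choice than yours.
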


\begin{proof}
For each $\varepsilon>0$, by Proposition \ref{Gt13}, there exists $N>0$ such
that $\mathbb{\hat{E}}[\int_{0}^{T}|X|^{p}I_{\{|X|>N\}}]\leq \frac{\varepsilon
}{2}$. Take $\delta=\frac{\varepsilon}{2N^{p}T}$. Then we have%
\begin{align*}
\mathbb{\hat{E}}\int_{0}^{T}[|X_{t}|^{p}|\eta_{t}|]dt  &  \leq \mathbb{\hat{E}%
}\int_{0}^{T}|X_{t}|^{p}|\eta_{t}|I_{\{|X_{t}|>N\}}dt+\mathbb{\hat{E}}\int
_{0}^{T}|X_{t}|^{p}|\eta_{t}|I_{\{|X_{t}|\leq N\}}dt\\
&  \leq \mathbb{\hat{E}}\int_{0}^{T}|X_{t}|^{p}I_{\{|X_{t}|>N\}}dt+N^{p}%
\mathbb{\hat{E}}\int_{0}^{T}|\eta_{t}|dt\leq \varepsilon \text{.}%
\end{align*}

\end{proof}

\begin{lemma}
\label{Lemm3.12}If $p\geq1$, $X,\eta \in M_{\ast}^{p}(0,T)$ such that $\eta$ is
bounded, then $X\eta \in M_{\ast}^{p}(0,T)$.
\end{lemma}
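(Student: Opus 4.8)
The plan is to prove the lemma by the standard density-plus-approximation route that underlies all the previous results in this section. First I would reduce to the case where both $X$ and $\eta$ are simple processes. Since $\eta \in M_{\ast}^{p}(0,T)$ is bounded, say $|\eta_{t}(\omega)|\leq C$ quasi-surely, I would pick simple processes $\eta^{n}\in M_{b,0}(0,T)$ with $\mathbb{\hat{E}}\int_{0}^{T}|\eta_{t}-\eta_{t}^{n}|^{p}dt\to 0$ and then truncate them at level $C$ (replacing $\eta^{n}$ by $(-C)\vee(\eta^{n}\wedge C)$), which only decreases the $M^{p}$-distance to $\eta$ since $\eta$ itself is bounded by $C$; so without loss of generality the $\eta^{n}$ are uniformly bounded by $C$. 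Similarly choose $X^{n}\in M_{b,0}(0,T)$ with $\mathbb{\hat{E}}\int_{0}^{T}|X_{t}-X_{t}^{n}|^{p}dt\to 0$. For simple processes, $X^{n}\eta^{n}$ is again a simple process in $M_{b,0}(0,T)$ (after passing to a common partition, the product of two $B_{b}(\Omega_{t_{j}})$-valued coefficients is again in $B_{b}(\Omega_{t_{j}})$), hence $X^{n}\eta^{n}\in M_{\ast}^{p}(0,T)$.

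The core of the argument is then to show $X^{n}\eta^{n}\to X\eta$ in $M_{\ast}^{p}(0,T)$, which gives $X\eta \in M_{\ast}^{p}(0,T)$ because $M_{\ast}^{p}(0,T)$ is complete. I would split the difference as
\[
X^{n}\eta^{n}-X\eta = (X^{n}-X)\eta^{n} + X(\eta^{n}-\eta).
\]
The first term is easy: since $|\eta^{n}|\leq C$, we get $\mathbb{\hat{E}}\int_{0}^{T}|(X_{t}^{n}-X_{t})\eta_{t}^{n}|^{p}dt \leq C^{p}\,\mathbb{\hat{E}}\int_{0}^{T}|X_{t}^{n}-X_{t}|^{p}dt\to 0$. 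The second term $\mathbb{\hat{E}}\int_{0}^{T}|X_{t}|^{p}|\eta_{t}^{n}-\eta_{t}|^{p}dt$ is where Proposition \ref{Gt14} enters: put $\zeta^{n}=|\eta^{n}-\eta|^{p}$; then $\zeta^{n}\in M_{b,0}(0,T)$ up to the obvious density argument (or work with $\zeta^{n}/(2C)^{p}$ to make it bounded by $1$), it satisfies $|\zeta^{n}_{t}(\omega)|\leq (2C)^{p}$, and $\mathbb{\hat{E}}\int_{0}^{T}\zeta^{n}_{t}dt = \mathbb{\hat{E}}\int_{0}^{T}|\eta^{n}_{t}-\eta_{t}|^{p}dt\to 0$. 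Applying Proposition \ref{Gt14} to $|X|^{p}$ (note $|X|^{p}\in M_{\ast}^{1}(0,T)$ since $X\in M_{\ast}^{p}(0,T)$), with $\eta$ there taken to be $\zeta^{n}/(2C)^{p}$, yields $\mathbb{\hat{E}}\int_{0}^{T}|X_{t}|^{p}|\eta^{n}_{t}-\eta_{t}|^{p}dt\to 0$. Combining the two estimates via the elementary inequality $|a+b|^{p}\leq 2^{p-1}(|a|^{p}+|b|^{p})$ gives $X^{n}\eta^{n}\to X\eta$ in $M_{\ast}^{p}(0,T)$, and completeness finishes the proof.

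The main obstacle is the second term, i.e. handling $X(\eta^{n}-\eta)$ when $X$ is only $p$-integrable and not bounded; a naive Hölder split would need $X$ in some $M^{p+\varepsilon}$, which we do not have. This is precisely the difficulty that Proposition \ref{Gt14} is designed to circumvent — it delivers a uniform-integrability-type modulus of continuity for the measure $|X_{t}|^{p}\,dt$ against perturbations that are small in $L^{1}$ and bounded, which is exactly the structure of $\zeta^{n}/(2C)^{p}$. One technical point to be careful about is that Proposition \ref{Gt14} is stated for $\eta \in M_{b,0}(0,T)$, so one should either first replace $|\eta^{n}-\eta|^{p}$ by a bounded simple approximant, or observe that $|\eta^{n}-\eta|^{p}$ lies in $M_{\ast}^{1}(0,T)$ and extend Proposition \ref{Gt14} to bounded elements of $M_{\ast}^{1}(0,T)$ by its own density argument; either way this is routine. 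A second minor point is verifying that products of simple processes stay simple with coefficients in the right $B_{b}(\Omega_{t_{j}})$, which follows since $B_{b}(\Omega_{t_{j}})$ is an algebra.
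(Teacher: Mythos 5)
Your argument is correct, but it is organized differently from the paper's. The paper truncates the product directly: it applies Proposition \ref{Gt13} to $X$ to get
$\hat{\mathbb{E}}\int_{0}^{T}|\eta_{t}X_{t}|^{p}I_{\{|\eta_{t}X_{t}|>N\}}dt\leq c^{p}\hat{\mathbb{E}}\int_{0}^{T}|X_{t}|^{p}I_{\{|X_{t}|>N/c\}}dt\rightarrow0$,
and concludes that the bounded truncations $(-n)\vee(n\wedge(\eta X))$ form a Cauchy sequence in $M_{\ast}^{p}(0,T)$. You instead approximate each factor by simple processes and control the cross term $X(\eta^{n}-\eta)$ via Proposition \ref{Gt14}. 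The two routes lean on the same uniform-integrability phenomenon (Proposition \ref{Gt14} is itself derived from Proposition \ref{Gt13}), but yours has a concrete advantage: your approximants $X^{n}\eta^{n}$ are manifestly elements of $M_{b,0}(0,T)$, whereas the paper's one-line conclusion tacitly uses that each truncation $(-n)\vee(n\wedge(\eta X))$ already lies in $M_{\ast}^{p}(0,T)$, a membership it does not justify. The price you pay is the small extra step of extending Proposition \ref{Gt14} from integrands in $M_{b,0}(0,T)$ to arbitrary measurable integrands bounded by $1$; as you observe, the proof of that proposition (split on $\{|X_{t}|>N\}$ and use monotonicity plus sub-additivity) goes through verbatim for such integrands, so this is harmless. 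Your reduction of the boundedness of $\eta^{n}$ by composing with the $1$-Lipschitz truncation at level $C$ is also fine, since that map fixes $[-C,C]$ and hence does not increase the distance to $\eta$.
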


\begin{proof}
Let $c>0$ be such that $|\eta_{t}(\omega)|\leq c$, for $\omega \in \Omega$,
$t\in \lbrack0,T]$. Then we have
\begin{align*}
\mathbb{\hat{E}}\int_{0}^{T}|\eta_{t}X_{t}|^{p}I_{\{|\eta_{t}X_{t}|>N\}}dt &
\leq c^{p}\mathbb{\hat{E}}\int_{0}^{T}|X_{t}|^{p}I_{\{|cX_{t}|>N\}}dt\\
&  \leq c^{p}\mathbb{\hat{E}}\int_{0}^{T}|X_{t}|^{p}I_{\{|X_{t}|>\frac{N}%
{c}\}}dt\rightarrow0\text{, as }N\rightarrow \infty.
\end{align*}
It follows that the bounded $\{Y\}_{n=1}^{\infty}:=\{(-n)\vee(n\wedge(\eta
X))\}_{n=1}^{\infty}$ is a Cauchy sequence in $M_{\ast}^{p}(0,T)$.
\end{proof}

\begin{remark}
It is easy to prove that if $\eta \in M_{\ast}^{2}(0,T)$, then $\int_{0}%
^{\cdot}\eta_{s}dB_{s}\in M_{\ast}^{2}(0,T)$.
\end{remark}

\section{Ito's integral with stopping times}

In this section we study It\^{o}'s integral on a interval $[0,\tau]$, where
$\tau$ is a stopping time. Reader can see that, thanks to Propositions
\ref{Gt13}, \ref{Gt14} and Lemma \ref{Lemm3.12}, the techniques used in this
section is very similar to the clasical situation.

\begin{definition}
A stopping time $\tau$ relative to the filtration $(\mathcal{F}_{t})$ is a map
on $\Omega$ with values in $[0,T]$, such that for every $t$,
\[
\{ \tau \leq t\} \in \mathcal{F}_{t}.
\]

\end{definition}

\begin{lemma}
\label{t1} For each stopping time $\tau$, we have $\mathbf{I}_{[0,\tau]}%
(\cdot)X\in M_{\ast}^{p}(0,T)$, for each $X\in M_{\ast}^{p}(0,T)$.
\end{lemma}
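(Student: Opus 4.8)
The goal is to show that multiplying a process $X \in M_\ast^p(0,T)$ by the indicator $\mathbf{I}_{[0,\tau]}$ of a stopping-time interval keeps us inside $M_\ast^p(0,T)$. The natural strategy is to approximate the stopping time $\tau$ from above by a decreasing sequence of discrete-valued stopping times $\tau_n$, for each of which the indicator $\mathbf{I}_{[0,\tau_n]}$ is a step process with the right adaptedness, so that $\mathbf{I}_{[0,\tau_n]}X$ manifestly lies in $M_\ast^p(0,T)$; then one passes to the limit using the completeness of $M_\ast^p(0,T)$ together with the uniform integrability characterization in Proposition \ref{Gt13}.

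\textbf{Key steps.} First I would set $\tau_n := \sum_{k=1}^{2^n} \frac{kT}{2^n}\mathbf{I}_{\{(k-1)T/2^n < \tau \le kT/2^n\}} + 0\cdot\mathbf{I}_{\{\tau=0\}}$ (the usual dyadic discretization), and check that each $\tau_n$ is a stopping time with $\tau_n \downarrow \tau$ and $\{\tau_n = kT/2^n\} \in \mathcal{F}_{kT/2^n}$. Then $\mathbf{I}_{[0,\tau_n]}(t) = \sum_{k} \mathbf{I}_{\{\tau_n \ge kT/2^n\}} \mathbf{I}_{[(k-1)T/2^n, kT/2^n)}(t)$, and since $\{\tau_n \ge kT/2^n\} = \{\tau > (k-1)T/2^n\}^c{}^c \in \mathcal{F}_{(k-1)T/2^n}$ — more precisely $\{\tau_n \ge kT/2^n\}$ is $\mathcal{F}_{(k-1)T/2^n}$-measurable because $\{\tau_n < kT/2^n\} = \{\tau \le (k-1)T/2^n\} \in \mathcal{F}_{(k-1)T/2^n}$ — the coefficient in front of each indicator is a bounded $\mathcal{F}_{(k-1)T/2^n}$-measurable random variable. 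Hence for $X \in M_{b,0}(0,T)$ (a simple process), $\mathbf{I}_{[0,\tau_n]}X$ is again a simple process in $M_{b,0}(0,T)$ after refining the partition to include the dyadic points, so it lies in $M_\ast^p(0,T)$; for general $X \in M_\ast^p(0,T)$, approximate $X$ by simple processes $X^{(m)}$ and note $\mathbf{I}_{[0,\tau_n]}X^{(m)} \to \mathbf{I}_{[0,\tau_n]}X$ in $M^p(0,T)$-norm since $|\mathbf{I}_{[0,\tau_n]}(X - X^{(m)})| \le |X - X^{(m)}|$, so $\mathbf{I}_{[0,\tau_n]}X \in M_\ast^p(0,T)$ by completeness. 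Finally, to pass from $\tau_n$ to $\tau$: since $\tau_n \downarrow \tau$, the processes $\mathbf{I}_{[0,\tau_n]}X$ converge pointwise to $\mathbf{I}_{[0,\tau]}X$, and the error $\mathbf{I}_{[0,\tau]}X - \mathbf{I}_{[0,\tau_n]}X = -\mathbf{I}_{(\tau,\tau_n]}X$ is dominated by $|X|$; one shows $\mathbb{\hat{E}}\int_0^T |\mathbf{I}_{(\tau,\tau_n]}(t)|\,|X_t|^p\,dt \to 0$ by splitting at level $N$: the part on $\{|X_t| > N\}$ is bounded by $\mathbb{\hat{E}}\int_0^T |X_t|^p \mathbf{I}_{\{|X_t|>N\}}dt$, which is small uniformly in $n$ by Proposition \ref{Gt13}, while the part on $\{|X_t| \le N\}$ is bounded by $N^p\,\mathbb{\hat{E}}\int_0^T \mathbf{I}_{(\tau,\tau_n]}(t)\,dt = N^p\,\mathbb{\hat{E}}[\tau_n - \tau] \le N^p\,T/2^n \to 0$. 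This gives $\mathbf{I}_{[0,\tau_n]}X \to \mathbf{I}_{[0,\tau]}X$ in $M^p(0,T)$-norm, so the limit lies in $M_\ast^p(0,T)$.

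\textbf{Main obstacle.} The delicate point is the very last limiting argument: one must control $\mathbb{\hat{E}}\int_0^T |\mathbf{I}_{(\tau,\tau_n]}(t)| |X_t|^p\,dt$ uniformly, and the naive bound $N^p \mathbb{\hat{E}}[\tau_n-\tau]$ requires knowing $\mathbb{\hat{E}}[\tau_n - \tau] \to 0$, which holds because $0 \le \tau_n - \tau \le T 2^{-n}$ pointwise and $\mathbb{\hat{E}}$ is monotone and constant-preserving. The uniform-in-$n$ smallness of the high-values tail is exactly what Proposition \ref{Gt13} is designed to supply — this is why the statement is placed after that proposition. One small care point: $\mathbf{I}_{(\tau,\tau_n]}X$ need not itself be simple, but that is irrelevant since we only need a norm estimate and the membership $\mathbf{I}_{[0,\tau]}X \in M_\ast^p(0,T)$ follows from being a limit in $M^p$-norm of elements of $M_\ast^p(0,T)$, using that $M_\ast^p(0,T)$ is (by definition) closed in that norm. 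Alternatively, if one wants to avoid discretizing $\tau$ entirely, one could directly approximate $\mathbf{I}_{[0,\tau]}$ in $M^1$-norm by the simple processes $\mathbf{I}_{[0,\tau_n]}$ and invoke Proposition \ref{Gt14} with $\eta = \mathbf{I}_{(\tau,\tau_n]}$ to get $\mathbb{\hat{E}}\int_0^T |X_t|^p \mathbf{I}_{(\tau,\tau_n]}(t)\,dt \le \varepsilon$ once $n$ is large enough that $\mathbb{\hat{E}}[\tau_n-\tau] \le \delta$; this is the cleanest route and is presumably the intended one given the placement of Proposition \ref{Gt14} immediately before this lemma.
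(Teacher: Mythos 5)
Your proposal is correct and follows essentially the same route as the paper: discretize $\tau$ from above by dyadic-valued stopping times $\tau_n$, observe that $\mathbf{I}_{[0,\tau_n]}X$ lies in $M_\ast^p(0,T)$, bound $\mathbb{\hat{E}}\int_0^T|\mathbf{I}_{[0,\tau_n]}-\mathbf{I}_{[0,\tau_m]}|\,dt$ by $\mathbb{\hat{E}}[\tau_n-\tau]\le 2^{-n}T$, and conclude via Proposition \ref{Gt14} (whose proof is exactly the level-$N$ splitting you carry out inline using Proposition \ref{Gt13}). The alternative you flag at the end as "presumably the intended one" is indeed the paper's argument.
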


\begin{proof}
For the given stopping time $\tau$, let
\[
\tau_{n}=\sum_{k=0}^{2^{n}-1}\frac{k}{2^{n}}\mathbf{I}_{[\frac{kT}{2^{n}}%
\leq \tau<\frac{(k+1)T}{2^{n}})}+T\mathbf{I}_{[\tau \geq T]}.
\]
Then we have $2^{-n}\geq \tau_{n}-\tau \geq0$. It is clear that, for $m\geq n$,
\begin{align*}
\mathbb{\hat{E}}\int_{0}^{T}|I_{[0,\tau_{n}]}(t)-I_{[0,\tau_{m}]}(t)|dt  &
\leq \mathbb{\hat{E}}\int_{0}^{T}|I_{[0,\tau_{n}]}(t)-I_{[0,\tau]}(t)|dt\\
&  =\mathbb{\hat{E}}[\tau_{n}-\tau]\leq2^{-n}T\text{.}%
\end{align*}

It follows from Proposition \ref{Gt14} that For each $\tau_{n}$, it is easy to
check that $\{I_{[0,\tau_{n}]}X\}_{n=1}^{\infty}$ is a Cauchy sequence in
$M_{\ast}^{p}(0,T)$. Thus $I_{[0,\tau]}X\in M_{\ast}^{p}(0,T)$.
\end{proof}

\begin{lemma}
For each $\eta \in M_{\ast}^{p}(0,T)$ and $\tau$ be a stopping time, then
\begin{equation}
\int_{0}^{t\wedge \tau}\eta_{s}dB_{s}=\int_{0}^{t}\mathbf{I}_{[0,\tau]}%
(s)\eta_{s}dB_{s},\  \text{quasi-surely.} \label{e3-1}%
\end{equation}

\end{lemma}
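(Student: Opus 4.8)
The idea is to first establish the identity for simple integrands $\eta \in M_{b,0}(0,T)$ together with the dyadic stopping times $\tau_n$ from the proof of Lemma \ref{t1}, and then pass to the limit in $n$ and in the approximation of $\eta$. For a simple integrand $\eta_t = \sum_{j=0}^{N-1}\xi_j \mathbf{I}_{[t_j,t_{j+1})}(t)$ and a stopping time $\tau_n$ taking finitely many values $kT/2^n$ ($k=0,\dots,2^n$), I would choose a common refinement of the partition $\{t_j\}$ and the dyadic grid $\{kT/2^n\}$, so that on each subinterval $\tau_n$ is either entirely to the left or entirely to the right of the endpoints. On such a refined partition, $\mathbf{I}_{[0,\tau_n]}(s)$ is constant on each piece and equals $\mathbf{I}_{\{kT/2^n < \tau_n\}}$ (up to endpoint conventions), which is $\mathcal{F}_{kT/2^n}$-measurable since $\tau_n$ is a stopping time; moreover it is bounded, hence in $B_b(\Omega_{kT/2^n})$. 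Thus $\mathbf{I}_{[0,\tau_n]}\eta$ is again a simple process in $M_{b,0}(0,T)$, and writing out both sides of \eqref{e3-1} as finite sums of increments $\xi_j(B_{t_{j+1}}-B_{t_j})$, one checks directly that dropping all terms with $t_{j+1}>\tau_n$ and truncating the term straddling $\tau_n$ gives exactly $\int_0^{t\wedge\tau_n}\eta_s dB_s$ on the event in question; since this is a pathwise-type identity among finitely many terms, it holds quasi-surely.

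\textbf{Passing to the limit in $n$.} With the identity known for $\tau_n$, I would let $n\to\infty$. On the right-hand side, $\mathbf{I}_{[0,\tau_n]}\eta \to \mathbf{I}_{[0,\tau]}\eta$ in $M_\ast^2(0,T)$: indeed $\mathbb{\hat{E}}\int_0^T|\mathbf{I}_{[0,\tau_n]}(t)-\mathbf{I}_{[0,\tau]}(t)|\,|\eta_t|^2\,dt \le \mathbb{\hat{E}}[\tau_n-\tau]\cdot\|\eta\|_\infty^2 \le 2^{-n}T\|\eta\|_\infty^2 \to 0$ for bounded simple $\eta$ (and Proposition \ref{Gt14} handles the general bounded case if one prefers), so by the isometry-type estimate \eqref{e2} the It\^o integrals converge in $\mathbb{L}^2(\Omega_T)$. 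On the left-hand side, $\int_0^{t\wedge\tau_n}\eta_s dB_s \to \int_0^{t\wedge\tau}\eta_s dB_s$ because $0 \le \tau_n-\tau \le 2^{-n}$ and $\int_0^\cdot \eta_s dB_s$ is quasi-surely continuous by Proposition \ref{p7}; so along a subsequence the convergence is quasi-sure, and matching the two quasi-sure limits yields \eqref{e3-1} for bounded simple $\eta$.

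\textbf{Extension to general $\eta \in M_\ast^p(0,T)$.} Finally I would remove the boundedness and simplicity restrictions. First, for general bounded $\eta \in M_\ast^2(0,T)$, approximate by $\eta^{(k)} \in M_{b,0}(0,T)$ in $M^2$-norm; by Lemma \ref{t1} (or directly Proposition \ref{Gt14}), $\mathbf{I}_{[0,\tau]}\eta^{(k)} \to \mathbf{I}_{[0,\tau]}\eta$ in $M_\ast^2(0,T)$, and $\int_0^{t\wedge\tau}\eta^{(k)}_s dB_s = \int_0^{t\wedge\tau}\eta_s dB_s$ in the limit after localizing via $\mathbf{I}_{[0,\tau\wedge t]}$ and using \eqref{e2} again, giving \eqref{e3-1} for all bounded $\eta$. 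Then for general $\eta \in M_\ast^2(0,T)$ use the truncation $\eta^n = (-n)\vee(\eta\wedge n)$ from Corollary \ref{p8}: both sides converge (the right side since $\mathbf{I}_{[0,\tau]}\eta^n \to \mathbf{I}_{[0,\tau]}\eta$ in $M_\ast^2$, the left side likewise after localization), so \eqref{e3-1} persists. For $p \ge 2$ this is covered since $M_\ast^p(0,T) \subset M_\ast^2(0,T)$; the case $1 \le p < 2$ requires knowing the It\^o integral and the estimate \eqref{e2} extend to $M_\ast^p$, which I would either quote from the construction or reduce to the $p=2$ case by a further truncation argument. The main obstacle I anticipate is the bookkeeping in the first step—verifying carefully that $\mathbf{I}_{[0,\tau_n]}\eta$ genuinely lies in $M_{b,0}(0,T)$ with the correct adaptedness of the coefficients and that the finite-sum manipulation respects the half-open-interval and endpoint conventions used in the definition of the simple integral; once that is pinned down, the limiting arguments are routine given Propositions \ref{p7}, \ref{Gt14}, \ref{Gt13} and Corollary \ref{p8}.
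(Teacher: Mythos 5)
Your proposal is correct, and its overall skeleton --- approximate $\tau$ from above by dyadic stopping times $\tau_n$ taking finitely many values, establish the identity for each $\tau_n$, then pass to the limit using $\hat{\mathbb{E}}[\tau_n-\tau]\le 2^{-n}T$ together with Proposition \ref{Gt14}, the bound \eqref{e2} and the quasi-sure continuity of $s\mapsto\int_0^s\eta_u\,dB_u$ --- is exactly the paper's. Where you diverge is in how the finitely-valued case is handled. You prove it first only for simple $\eta\in M_{b,0}(0,T)$ by an explicit term-by-term comparison on a common refinement of the partition and the dyadic grid, and consequently you need two further layers of approximation (simple to bounded to general, via Corollary \ref{p8} and Lemma \ref{t1}) to reach general $\eta$. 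The paper instead writes $\tau_n=\sum_k\mathbf{I}_{A_n^k}t_n^k$ with $A_n^k\in\mathcal{F}_{t_n^k}$ and invokes Proposition \ref{p5} --- interval additivity together with the rule $\mathbf{I}_{A_n^k}\int_{t_n^k}^t\eta_s\,dB_s=\int_{t_n^k}^t\mathbf{I}_{A_n^k}\eta_s\,dB_s$ for $B_b(\Omega_{t_n^k})$-coefficients --- to obtain $\int_{\tau_n}^t\eta_s\,dB_s=\int_0^t\mathbf{I}_{[\tau_n,t]}(s)\eta_s\,dB_s$ directly for arbitrary $\eta\in M_{\ast}^{2}(0,T)$, so no approximation in $\eta$ is needed at all. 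Your route costs more bookkeeping (the endpoint conventions and the extra limit passages you flag yourself) but is self-contained at the level of simple processes; the paper's is shorter provided one accepts Proposition \ref{p5}. One further point in your favour: you correctly observe that the statement as written for $M_{\ast}^{p}(0,T)$ only literally makes sense when the It\^{o} integral is defined, i.e.\ effectively for $p\ge 2$, a point the paper glosses over by silently working in $M_{\ast}^{2}(0,T)$.
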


\begin{proof}
For each $n\in \mathbb{N}$, let
\[
\tau_{n}:=\sum_{k=1}^{[t\cdot2^{n}]}\frac{k}{2^{n}}\mathbf{I}_{[\frac
{(k-1)t}{2^{n}}\leq \tau<\frac{kt}{2^{n}})}+t\mathbf{I}_{[\tau \geq t]}%
=\sum_{k=1}^{2^{n}}\mathbf{I}_{A_{n}^{k}}t_{n}^{k}.
\]
where $t_{n}^{k}=k2^{-n}t$, $A_{n}^{k}=[t_{n}^{k-1}<t\wedge \tau \leq t_{n}%
^{k}]$, for $k<2^{n}$, and $A_{n}^{2^{n}}=[\tau \geq t]$. $\{ \tau_{n}%
\}_{n=1}^{\infty}$ is a decreasing sequence of stopping times which converges
q.s. to $t\wedge \tau$.

We first prove that
\begin{equation}
\int_{\tau_{n}}^{t}\eta_{s}dB_{s}=\int_{0}^{t}\mathbf{I}_{[\tau_{n},t]}%
(s)\eta_{s}dB_{s}. \label{e3}%
\end{equation}

But by Proposition \ref{p5} we have
\begin{align*}
\int_{\tau_{n}}^{t}\eta_{s}dB_{s}  &  =\int_{\sum_{k=1}^{2^{n}}\mathbf{I}%
_{A_{n}^{k}}t_{n}^{k}}^{t}\eta_{s}dB_{s}=\sum_{k=1}^{2^{n}}\mathbf{I}%
_{A_{n}^{k}}\int_{t_{n}^{k}}^{t}\eta_{s}dB_{s}\\
&  =\sum_{k=1}^{2^{n}}\int_{t_{n}^{k}}^{t}\mathbf{I}_{A_{n}^{k}}\eta_{s}%
dB_{s}\\
&  =\int_{0}^{t}\sum_{k=1}^{2^{n}}\mathbf{I}_{[t_{n}^{k},t]}(s)\mathbf{I}%
_{A_{n}^{k}}\eta_{s}dB_{s},
\end{align*}
from which (\ref{e3}) follows. We thus have%
\[
\int_{0}^{\tau_{n}}\eta_{s}dB_{s}=\int_{0}^{t}\mathbf{I}_{[0,\tau_{n}]}%
(s)\eta_{s}dB_{s}.
\]
Observe that $0\leq \tau_{n}-\tau_{m}\leq \tau_{n}-t\wedge \tau \leq2^{-n}t$, for
$n\leq m$, this with Proposition \ref{Gt14} it follows that $\mathbf{I}%
_{[0,\tau_{n}]}\eta$ converges in $M_{\ast}^{2}(0,T)$ to $\mathbf{I}%
_{[0,\tau \wedge t]}\eta$ and thus $\mathbf{I}_{[0,\tau \wedge t]}\eta \in
M_{\ast}^{2}(0,T)$. Consequently,
\[
\lim_{n\rightarrow \infty}\int_{0}^{\tau_{n}}\eta_{s}dB_{s}=\int_{0}%
^{t\wedge \tau}\eta_{s}dB_{s},\  \  \text{quasi-surely}%
\]
and (\ref{e3-1}) holds as well.
\end{proof}

\begin{definition}
\label{Def4.4}Let $p>0$ be fixed. A stochastic process $(\eta_{t})_{t\geq0}$
with $\eta_{t}\in \mathbb{L}^{0}(\Omega_{t})$ is said to be in $M_{\omega}%
^{p}(0,T)$ if there exists a sequence of increasing stopping times
$\{ \sigma_{m}\}_{m=1}^{\infty}$, with $\sigma_{m}\uparrow T$, quasi-surely,
such that $\eta \mathbf{I}_{[0,\sigma_{m}]}\in M_{\ast}^{p}(0,T)$ and
\[
\inf_{P\in \mathcal{P}}P(\int_{0}^{T}|\eta_{s}|^{p}ds<\infty)=1.\  \  \text{ }%
\]

\end{definition}

\begin{remark}
In the rest of this paper the notation $\{ \sigma_{m}\}_{m=1}^{\infty}$ is used
to denote the sequence of the corresponding process $\eta \in M_{\omega}%
^{p}(0,T)$. Let $\{ \tau_{m}\}_{m=1}^{\infty}$ be another sequence of
increasing stopping times with $\tau_{m}\uparrow T$, quasi-surely. Then it is
easy to check that $\eta \mathbf{I}_{[0,\sigma_{m}\wedge \tau_{m}]}\in M_{\ast
}^{p}(0,T)$ and $\sigma_{m}\wedge \tau_{m}\uparrow T$, quasi-surely. Thus we
can as well use $\sigma_{m}\wedge \tau_{m}$ in the place of $\sigma_{m}$. For
example, when we consider two processes $\eta$, $\bar{\eta}\in M_{\omega
}^{\ast}(0,T)$ with two sequences of stopping times $\{ \sigma_{m}%
\}_{m=1}^{\infty}$ and $\{ \bar{\sigma}_{m}\}_{m=1}^{\infty}$, we may only use
one sequence $\{ \sigma_{m}\wedge \bar{\sigma}_{m}\}_{m=1}^{\infty}$ for both
$\eta$ and $\bar{\eta}$.
\end{remark}

\begin{lemma}
\label{Lemm4.6}Let $\eta \in M_{\omega}^{1}(0,T)$ be given and let
\[
\tau_{n}=\inf \{t\geq0,\int_{0}^{t}|\eta_{s}|ds>n\} \wedge \sigma_{n}.
\]
Then $\eta \mathbf{I}_{[0,\tau_{n}]}\in M_{\ast}^{1}(0,T)$ and $\int_{0}%
^{t}\mathbf{I}_{[0,\tau_{n}]}(s)\eta_{s}ds$ and $\int_{0}^{t}\mathbf{I}%
_{[0,\tau_{n}]}(s)\eta_{s}d\left \langle B\right \rangle _{s}$ are well-defined
processes which are continuous on $[0,T]$ quasi-surely.
\end{lemma}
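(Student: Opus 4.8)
The plan is to verify three things in turn: that $\eta \mathbf{I}_{[0,\tau_n]}\in M_{\ast}^1(0,T)$, that the resulting Bochner-type integrals are well-defined, and that they are quasi-surely continuous in $t$. For the first point, note that $\tau_n = \inf\{t\geq 0: \int_0^t|\eta_s|ds > n\}\wedge\sigma_n$ is a stopping time, being the minimum of a hitting time of the continuous increasing process $t\mapsto\int_0^t|\eta_s|ds$ and the stopping time $\sigma_n$ coming from the definition of $M_\omega^1(0,T)$. Since $\tau_n\leq\sigma_n$, we have $\mathbf{I}_{[0,\tau_n]}(s)\eta_s = \mathbf{I}_{[0,\tau_n]}(s)\cdot\bigl(\mathbf{I}_{[0,\sigma_n]}(s)\eta_s\bigr)$ where $\mathbf{I}_{[0,\sigma_n]}\eta\in M_{\ast}^1(0,T)$ by hypothesis; then Lemma \ref{t1} applied to the process $\mathbf{I}_{[0,\sigma_n]}\eta\in M_{\ast}^1(0,T)$ and the stopping time $\tau_n$ gives $\mathbf{I}_{[0,\tau_n]}\mathbf{I}_{[0,\sigma_n]}\eta = \mathbf{I}_{[0,\tau_n]}\eta\in M_{\ast}^1(0,T)$.

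Next, for the well-definedness of $\int_0^t\mathbf{I}_{[0,\tau_n]}(s)\eta_s\,ds$ and $\int_0^t\mathbf{I}_{[0,\tau_n]}(s)\eta_s\,d\langle B\rangle_s$: since $\mathbf{I}_{[0,\tau_n]}\eta\in M_{\ast}^1(0,T)$ there is a sequence $\eta^{(k)}\in M_{b,0}(0,T)$ with $\mathbb{\hat{E}}\int_0^T|\mathbf{I}_{[0,\tau_n]}(s)\eta_s-\eta^{(k)}_s|\,ds\to 0$. For each $\eta^{(k)}$ the Bochner integrals are defined by the explicit step-function formula, and one checks $\mathbb{\hat{E}}\sup_{t\leq T}|\int_0^t(\eta^{(k)}_s-\eta^{(\ell)}_s)\,ds|\leq \mathbb{\hat{E}}\int_0^T|\eta^{(k)}_s-\eta^{(\ell)}_s|\,ds$, and similarly with $d\langle B\rangle_s$ using $d\langle B\rangle_s\leq\bar\sigma^2\,ds$ quasi-surely; hence the partial integrals form a Cauchy sequence in $\mathbb{\hat{E}}[\sup_{t\leq T}|\cdot|]$ and converge. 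The limit is independent of the approximating sequence (difference controlled by $\mathbb{\hat{E}}\int_0^T|\cdot|\,ds$), so both processes are well-defined.

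For quasi-sure continuity: each approximant $\int_0^{\cdot}\eta^{(k)}_s\,ds$ is (Lipschitz-)continuous on $[0,T]$ everywhere, and $\int_0^{\cdot}\eta^{(k)}_s\,d\langle B\rangle_s$ is continuous quasi-surely since $\langle B\rangle$ is a quasi-surely continuous increasing process. By the Cauchy estimate above in the sup-norm, passing to a subsequence we get quasi-sure uniform convergence on $[0,T]$ (invoking the proposition on subsequential quasi-sure convergence from $\mathbb{L}^p$-convergence, applied pathwise to $\sup_{t\leq T}|\cdot|$); a uniform limit of quasi-surely continuous functions is quasi-surely continuous. This mirrors exactly the argument of Proposition \ref{p7}.

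I expect the only genuinely delicate point to be the measurability/stopping-time claim for $\tau_n$ and the verification that $\mathbf{I}_{[0,\tau_n]}\eta$ lands in $M_{\ast}^1$ rather than merely in some larger space — i.e., that the truncation at level $n$ of the integrated process, together with $\sigma_n$, genuinely produces an $M_{\ast}^1$ process to which Lemma \ref{t1} applies. The continuity of $t\mapsto\int_0^t|\eta_s|ds$ needed to make $\tau_n$ a stopping time itself follows from the well-definedness argument in the second step applied to $|\eta|\mathbf{I}_{[0,\sigma_n]}$, so there is a mild bootstrapping order to respect: first establish the integral for $\mathbf{I}_{[0,\sigma_m]}\eta$, deduce continuity of its running absolute integral, then define $\tau_n$ and apply Lemma \ref{t1}.
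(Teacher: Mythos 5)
Your proof is correct and follows essentially the route the paper intends (the paper only remarks that the proof is ``similar to that of the following proposition''): you obtain $\eta\mathbf{I}_{[0,\tau_n]}\in M_{\ast}^{1}(0,T)$ by factoring through $\mathbf{I}_{[0,\sigma_n]}\eta$ and applying Lemma \ref{t1}, and you get well-definedness and quasi-sure continuity of the $ds$ and $d\langle B\rangle_s$ integrals by the same sup-norm approximation argument as in Proposition \ref{p7}, using $d\langle B\rangle_s\leq\bar\sigma^2\,ds$ q.s. Your closing remark on the bootstrapping needed to make $\tau_n$ a legitimate stopping time is a genuine subtlety that the paper glosses over, and your resolution of it is sound.
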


The proof is similar to that of the following proposition.

\begin{proposition}
\label{Prop4.7}let $\tau_{n}=\inf \{t\geq0,\int_{0}^{t}|\eta_{s}|^{2}%
ds>n\} \wedge \sigma_{n}$ and $\Omega_{n}=\{ \tau_{n}=T\}$. Then $\eta
\mathbf{I}_{[0,\tau_{n}]}\in M_{\ast}^{2}(0,T)$ and the stochastic process
$(\int_{0}^{t}\eta_{s}dB_{s})_{t\in \lbrack0,T]}$ is a well-defined
quasi-surely continuous process defined on $\Omega$.
\end{proposition}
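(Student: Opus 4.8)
The plan is to show first that $\eta \mathbf{I}_{[0,\tau_n]} \in M_\ast^2(0,T)$ and then to patch together the integrals on the sets $\Omega_n = \{\tau_n = T\}$ to obtain a globally defined, quasi-surely continuous process. For the membership claim, I would fix $n$ and observe that $\tau_n$ is a stopping time, being the minimum of $\sigma_n$ (a stopping time by hypothesis in Definition \ref{Def4.4}) and the hitting time $\inf\{t\geq 0 : \int_0^t |\eta_s|^2\,ds > n\}$, which is a stopping time because $t \mapsto \int_0^t |\eta_s|^2\,ds$ is continuous and adapted. Since $\eta \mathbf{I}_{[0,\sigma_n]} \in M_\ast^2(0,T)$ and $\tau_n \leq \sigma_n$, Lemma \ref{t1} applied to $X = \eta \mathbf{I}_{[0,\sigma_n]}$ gives $\mathbf{I}_{[0,\tau_n]} \eta \mathbf{I}_{[0,\sigma_n]} = \eta \mathbf{I}_{[0,\tau_n]} \in M_\ast^2(0,T)$. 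By the definition of $\tau_n$ we also have $\hat{\mathbb{E}}\int_0^T |\eta_s \mathbf{I}_{[0,\tau_n]}(s)|^2\,ds \leq n$, so the integral $\int_0^t \mathbf{I}_{[0,\tau_n]}(s)\eta_s\,dB_s$ is well-defined via Lemma \ref{bdd}, and by Proposition \ref{p7} it is quasi-surely continuous in $t$ on $[0,T]$.

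Next I would define the candidate global process by setting $\int_0^t \eta_s\,dB_s := \int_0^t \mathbf{I}_{[0,\tau_n]}(s)\eta_s\,dB_s$ on $\Omega_n$, and check consistency: for $m \geq n$ one has $\tau_n \leq \tau_m$, so on $\Omega_n$ the identity $\tau_n = T$ forces $\tau_m = T$ as well, whence $\mathbf{I}_{[0,\tau_n]}(s) = \mathbf{I}_{[0,\tau_m]}(s) = 1$ for all $s \in [0,T]$, and more to the point I would invoke the localization identity (\ref{e3-1}) together with Proposition \ref{p5}(1) to see that $\int_0^t \mathbf{I}_{[0,\tau_n]}(s)\eta_s\,dB_s$ and $\int_0^t \mathbf{I}_{[0,\tau_m]}(s)\eta_s\,dB_s$ agree quasi-surely on the event $\{\tau_n = \tau_m\} \supset \Omega_n$; this uses that $\int_0^t \mathbf{I}_{[\tau_n, \tau_m]}(s)\eta_s\,dB_s = 0$ on $\{\tau_n = \tau_m\}$, which follows from (\ref{e3-1}) and the fact that $\int_{\tau_n}^{\tau_n} = 0$. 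Hence the definition does not depend on which $n$ with $\omega \in \Omega_n$ we choose, and on $\Omega_n$ the process inherits quasi-sure continuity from the continuity of $\int_0^\cdot \mathbf{I}_{[0,\tau_n]}(s)\eta_s\,dB_s$.

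Finally, to see that the process is defined quasi-surely on all of $\Omega$, I would show $\hat{c}\big(\bigcap_n \Omega_n^c\big) = 0$, equivalently $\hat{c}\big(\bigcup_n \Omega_n\big) = 1$ in the sense $\inf_{P \in \mathcal{P}} P\big(\bigcup_n \Omega_n\big) = 1$. For each $P \in \mathcal{P}$ we have $\Omega_n^c = \{\tau_n < T\} \subset \{\sigma_n < T\} \cup \{\int_0^T |\eta_s|^2\,ds > n\}$, so $P(\Omega_n^c) \leq P(\sigma_n < T) + P(\int_0^T |\eta_s|^2\,ds > n)$; the first term tends to $0$ because $\sigma_n \uparrow T$ quasi-surely (hence $P(\sigma_n < T) \to 0$ for every $P \in \mathcal{P}$), and the second tends to $0$ because $\inf_{P \in \mathcal{P}} P(\int_0^T |\eta_s|^2\,ds < \infty) = 1$ by Definition \ref{Def4.4}, so for each $P$ the finite random variable $\int_0^T |\eta_s|^2\,ds$ has $P(\int_0^T |\eta_s|^2\,ds > n) \to 0$. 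Thus $P(\bigcup_n \Omega_n) = 1$ for every $P \in \mathcal{P}$, giving the claim, and the process $\int_0^t \eta_s\,dB_s$ is well-defined and quasi-surely continuous on $\Omega$. The main obstacle I anticipate is the consistency check across different $n$ and making the patching rigorous quasi-surely — one must be careful that the null sets accumulated over the countable family $\{\tau_n\}$ still form a polar set, which is fine since a countable union of polar sets is polar, but the bookkeeping linking (\ref{e3-1}), Proposition \ref{p5}, and the continuity statement of Proposition \ref{p7} needs to be done on the common full-measure event.
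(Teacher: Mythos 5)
Your proof follows the same localization-and-patching strategy as the paper's, and in fact supplies several details the paper leaves implicit: the paper simply asserts $\eta\mathbf{I}_{[0,\tau_n]}\in M_{\ast}^{2}(0,T)$, whereas you derive it from Lemma \ref{t1} applied to $\eta\mathbf{I}_{[0,\sigma_n]}$ (using $\tau_n\le\sigma_n$ so that $\mathbf{I}_{[0,\tau_n]}\mathbf{I}_{[0,\sigma_n]}=\mathbf{I}_{[0,\tau_n]}$); and the paper's consistency check is exactly your observation that $\tau_m=\tau_n=T$ on $\Omega_n$ for $m\ge n$, which you then back up with (\ref{e3-1}). That part of your argument is sound and, if anything, more careful than the original.

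There is, however, one step that does not hold as stated: the claim that $P(\sigma_n<T)\to 0$ for every $P\in\mathcal{P}$ because $\sigma_n\uparrow T$ quasi-surely. Monotone convergence of the stopping times to $T$ does not force them to reach $T$: take $\sigma_n\equiv T(1-1/n)$, which increases to $T$ everywhere yet satisfies $P(\sigma_n<T)=1$ for all $n$ and all $P$. In that case $\bigcap_n\{\sigma_n<T\}$, and hence $\bigcap_n\Omega_n^c$, can have full capacity, and the patching never covers $\Omega$ up to a polar set. What is actually needed is the stronger property that the localizing sequence reaches $T$ stationarily quasi-surely, i.e. $\hat c\bigl(\bigcap_m\{\sigma_m<T\}\bigr)=0$; this must either be read into Definition \ref{Def4.4} or imposed separately, since it does not follow from $\sigma_m\uparrow T$ q.s. alone. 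To be fair, this is precisely the point the paper itself glosses over — it asserts $\hat c(\bar\Omega^c)=0$ with no justification — so the gap is inherited rather than introduced; but your attempted justification of that assertion is where the argument would fail. Your treatment of the other term, $P\bigl(\int_0^T|\eta_s|^2\,ds>n\bigr)\to 0$ for each fixed $P$ by Definition \ref{Def4.4}, is correct.
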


\begin{proof}
Since, for each $n=1,2,\cdots$, $\eta \mathbf{I}_{[0,\tau_{n}]}\in M_{\ast}%
^{2}(0,T)$, so the It\^{o}'s integral $\int_{0}^{t}\mathbf{I}_{[0,\tau_{n}%
]}(s)\eta_{s}dB_{s}$ is well-defined. On the other hand, on the subset
$\Omega_{n}=\{ \tau_{n}=T\}$ and for each $m>n\,$, we have $\tau_{m}=\tau
_{n}=T$. Thus
\[
\lim_{m\rightarrow \infty}\mathbf{I}_{\Omega_{n}}\int_{0}^{\tau_{m}\wedge
t}\eta_{s}dB_{s}=\mathbf{I}_{\Omega_{n}}\int_{0}^{\tau_{n}\wedge t}\eta
_{s}dB_{s}=\mathbf{I}_{\Omega_{n}}\int_{0}^{t}\mathbf{I}_{[0,\tau_{n}]}%
(s)\eta_{s}dB_{s},\ t\in \lbrack0,T].
\]%
\[
\lim_{m\rightarrow \infty}\mathbf{I}_{\Omega_{n}}\int_{0}^{\tau_{m}\wedge
t}\eta_{s}dB_{s}=\mathbf{I}_{\Omega_{n}}\int_{0}^{t}\eta_{s}dB_{s}%
,\  \ t\in \lbrack0,T]
\]
Thus on $\Omega_{n}$ the process $(\int_{0}^{t}\eta_{s}dB_{s})_{t\in
\lbrack0,T]}$ is a well-defined process which is continuous in $t$
quasi-surely. Since $\Omega_{n}\uparrow \bar{\Omega}\subset \Omega$, with
$\hat{c}(\bar{\Omega}^{c})=0$. It follows $(\int_{0}^{t}\eta_{s}dB_{s}%
)_{t\in \lbrack0,T]}$ can is a well-defined process which is continuous in $t$
quasi-surely. \ 
\end{proof}

\begin{corollary}
\label{Coro4.8}We assume that $\varphi \in C^{1,2}([0,\infty)\times \mathbb{R})$
and all first and second order derivatives of $\varphi$ with respect to
$(t,x)$ are bounded. Let $\alpha,\eta,\beta \in M_{\omega}^{2}(0,T)$ and
$X_{t}=\int_{0}^{t}\alpha_{s}ds+\int_{0}^{t}\eta_{s}d\langle B\rangle_{s}%
+\int_{0}^{t}\beta_{s}dB_{s}$, $t\in \lbrack0,T]$. Then, for each $\varphi \in
C(\mathbb{R})$ and $\gamma \in M_{\omega}^{p}(0,T)$, $\varphi(X)\gamma \in
M_{\omega}^{p}(0,T)$.
\end{corollary}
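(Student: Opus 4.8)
The plan is to reduce the statement to the already-established properties of $M_{\ast}^{p}$ and $M_{\omega}^{p}$ by exhibiting an explicit localizing sequence of stopping times. Concretely, I would first fix, for the process $\gamma$, its associated sequence $\{\sigma_{m}\}$ from Definition \ref{Def4.4}, so that $\gamma \mathbf{I}_{[0,\sigma_{m}]}\in M_{\ast}^{p}(0,T)$ and $\inf_{P\in\mathcal{P}}P(\int_{0}^{T}|\gamma_{s}|^{p}ds<\infty)=1$. Since $\alpha,\eta,\beta\in M_{\omega}^{2}(0,T)$, I would (using the Remark after Definition \ref{Def4.4}) take a common sequence $\{\rho_{m}\}$ of increasing stopping times with $\rho_{m}\uparrow T$ q.s. that localizes all three of $\alpha,\eta,\beta$ simultaneously into $M_{\ast}^{2}(0,T)$, and then replace $\{\sigma_{m}\}$ by $\tilde\sigma_{m}:=\sigma_{m}\wedge\rho_{m}\wedge\tau_{m}$, where $\tau_{m}$ is an additional stopping time (of the type in Proposition \ref{Prop4.7}) chosen so that $X$ stays bounded on $[0,\tilde\sigma_{m}]$; again $\tilde\sigma_{m}\uparrow T$ q.s.

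The core of the argument is then to show $\varphi(X)\gamma\mathbf{I}_{[0,\tilde\sigma_{m}]}\in M_{\ast}^{p}(0,T)$ for each $m$. Write $\varphi(X)\gamma\mathbf{I}_{[0,\tilde\sigma_{m}]}=\varphi(X^{\tilde\sigma_{m}})\cdot(\gamma\mathbf{I}_{[0,\tilde\sigma_{m}]})$ where $X^{\tilde\sigma_{m}}_{t}=X_{t\wedge\tilde\sigma_{m}}$. On $[0,\tilde\sigma_{m}]$ the process $X$ is bounded, say $|X_{t}|\le C_{m}$, so $\varphi(X^{\tilde\sigma_{m}})$ takes values in the compact set $[-C_{m},C_{m}]$; since $\varphi\in C(\mathbb{R})$, $\varphi$ is bounded and uniformly continuous there, hence $\varphi(X^{\tilde\sigma_{m}})$ is a bounded process. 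By Proposition \ref{Prop4.7} (and Lemma \ref{Lemm4.6}) the process $X^{\tilde\sigma_{m}}=\int_{0}^{\cdot}\mathbf{I}_{[0,\tilde\sigma_{m}]}\alpha_{s}ds+\int_{0}^{\cdot}\mathbf{I}_{[0,\tilde\sigma_{m}]}\eta_{s}d\langle B\rangle_{s}+\int_{0}^{\cdot}\mathbf{I}_{[0,\tilde\sigma_{m}]}\beta_{s}dB_{s}$ is quasi-surely continuous and adapted, so $\varphi(X^{\tilde\sigma_{m}})$ is a bounded, quasi-surely continuous, adapted process, hence in $M_{\ast}^{p}(0,T)$ (bounded continuous adapted processes can be approximated in $\|\cdot\|_{M^{p}(0,T)}$ by elements of $M_{b,0}(0,T)$ via their time-discretizations, using Proposition \ref{pr8}-type continuity). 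Now apply Lemma \ref{Lemm3.12} with the bounded factor $\varphi(X^{\tilde\sigma_{m}})$ and the $M_{\ast}^{p}(0,T)$-factor $\gamma\mathbf{I}_{[0,\tilde\sigma_{m}]}$ to conclude $\varphi(X^{\tilde\sigma_{m}})\cdot\gamma\mathbf{I}_{[0,\tilde\sigma_{m}]}\in M_{\ast}^{p}(0,T)$. Since on $[0,\tilde\sigma_{m}]$ this equals $\varphi(X)\gamma\mathbf{I}_{[0,\tilde\sigma_{m}]}$, the localized process is in $M_{\ast}^{p}(0,T)$.

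Finally, I would verify the integrability condition $\inf_{P\in\mathcal{P}}P(\int_{0}^{T}|\varphi(X_{s})\gamma_{s}|^{p}ds<\infty)=1$. For fixed $P\in\mathcal{P}$, the path $s\mapsto X_{s}(\omega)$ is continuous $P$-a.s. (by Proposition \ref{Prop4.7}), hence bounded on $[0,T]$, so $\sup_{s\le T}|\varphi(X_{s})|<\infty$ $P$-a.s.; combined with $\int_{0}^{T}|\gamma_{s}|^{p}ds<\infty$ $P$-a.s. (which holds because $\gamma\in M_{\omega}^{p}(0,T)$), we get $\int_{0}^{T}|\varphi(X_{s})\gamma_{s}|^{p}ds<\infty$ $P$-a.s., and since $P\in\mathcal{P}$ was arbitrary the infimum is $1$. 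Together with the localizing sequence $\{\tilde\sigma_{m}\}$ constructed above, this is exactly the definition of $\varphi(X)\gamma\in M_{\omega}^{p}(0,T)$. The main obstacle I anticipate is the bookkeeping around the localization: one must make sure a single sequence of stopping times simultaneously (i) makes $X$ bounded, (ii) keeps all of $\alpha,\eta,\beta,\gamma$ in $M_{\ast}^{2}$ resp. $M_{\ast}^{p}$, and (iii) still increases to $T$ quasi-surely; the Remark following Definition \ref{Def4.4} is what makes this legitimate, and one should invoke it explicitly rather than juggle several sequences. The other mildly delicate point is justifying that a bounded quasi-surely continuous adapted process lies in $M_{\ast}^{p}(0,T)$, which uses dominated-convergence-type control of the discretization error, available here through Proposition \ref{pr8} and Proposition \ref{Prop5}.
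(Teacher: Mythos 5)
Your architecture is exactly what the paper's one-sentence proof (``the proof is easy since $(X_{t})_{t\geq0}$ is a quasi-surely continuous process'') is gesturing at: localize with a single common sequence of stopping times so that $X$ is bounded on the stopped interval and $\gamma\mathbf{I}_{[0,\tilde{\sigma}_{m}]}\in M_{\ast}^{p}(0,T)$, multiply the bounded factor $\varphi(X^{\tilde{\sigma}_{m}})$ against it via Lemma \ref{Lemm3.12}, and finish with the quasi-sure integrability check, which you do correctly. The localization bookkeeping via the Remark after Definition \ref{Def4.4} and the final step $\sup_{s\leq T}|\varphi(X_{s})|<\infty$ $P$-a.s.\ for each $P\in\mathcal{P}$ are both fine.

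The one genuine gap is the justification of the crucial step that the bounded, quasi-surely continuous, adapted process $\varphi(X^{\tilde{\sigma}_{m}})$ lies in $M_{\ast}^{p}(0,T)$. You attribute this to ``dominated-convergence-type control of the discretization error, available through Proposition \ref{pr8}.'' That does not go through as stated: Proposition \ref{pr8} applies only to sequences in $C_{b}(\Omega)$ decreasing to zero \emph{everywhere} on $\Omega$, and for the upper expectation $\mathbb{\hat{E}}[\cdot]=\sup_{P\in\mathcal{P}}E_{P}[\cdot]$ dominated convergence under merely quasi-sure convergence of bounded measurable random variables fails in general (the supremum over $P$ need not commute with the limit). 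The repair is the quantitative argument the paper itself uses in the appendix (proof of Lemma \ref{d-Lem-26}): since $|X^{\tilde{\sigma}_{m}}|\leq C_{m}$, first approximate $\varphi$ uniformly on $[-C_{m},C_{m}]$ by Lipschitz functions $\varphi_{j}$; for Lipschitz $\varphi_{j}$ the time-discretization $\sum_{k}\varphi_{j}(X_{t_{k}^{N}}^{\tilde{\sigma}_{m}})\mathbf{I}_{[t_{k}^{N},t_{k+1}^{N})}$ converges to $\varphi_{j}(X^{\tilde{\sigma}_{m}})$ in $M_{\ast}^{p}(0,T)$ because
\[
\mathbb{\hat{E}}\bigl[|\varphi_{j}(X_{u}^{\tilde{\sigma}_{m}})-\varphi_{j}(X_{t_{k}^{N}}^{\tilde{\sigma}_{m}})|^{2}\bigr]\leq c_{j}^{2}\,\mathbb{\hat{E}}\bigl[|X_{u}^{\tilde{\sigma}_{m}}-X_{t_{k}^{N}}^{\tilde{\sigma}_{m}}|^{2}\bigr]\leq C(\delta+\delta^{2}),
\]
using that the localized integrands are in $M_{\ast}^{2}(0,T)$; then $\varphi_{j}(X^{\tilde{\sigma}_{m}})\rightarrow\varphi(X^{\tilde{\sigma}_{m}})$ uniformly, hence in the $M^{p}(0,T)$ norm. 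With that substitution your proof is complete and coincides with the paper's intended argument.
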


The proof is easy since $(X_{t})_{t\geq0}$ is a quasi-surely continuous process.

\section{It\^{o}'s Formula}

\begin{lemma}
\label{lemma1} We assume that $\varphi \in C^{2}(\mathbb{R}^{n})$ and all first
and second order derivatives of $\varphi$ with respect to $x$ are bounded. Let
$X=(X^{1},\cdots,X^{n})$ and
\[
X_{t}^{i}=X_{0}+\int_{0}^{t}\alpha_{s}^{i}ds+\int_{0}^{t}\eta_{s}^{i}d\langle
B\rangle_{s}+\int_{0}^{t}\beta_{s}^{i}dB_{s},\ i=1,\cdots,n.
\]
where $\alpha,\beta,\eta$ are bonded elements in $M_{\ast}^{2}(0,T)$. Then for
each $t\geq0$, we have
\begin{align*}
\varphi(X_{t})-\varphi(X_{0}) &  =\int_{0}^{t}\partial_{x_{i}}\varphi
(X_{u})\beta_{u}^{i}dB_{u}+\int_{0}^{t}\partial_{x_{i}}\varphi(X_{u}%
)\alpha_{u}^{i}du\\
&  +\int_{0}^{t}[\partial_{x_{i}}\varphi(X_{u})\eta_{u}^{i}+\frac{1}%
{2}\partial_{x_{i}x_{j}}^{2}\varphi(X_{u})\beta_{u}^{i}\beta_{u}^{j}]d\langle
B\rangle_{u}.
\end{align*}
Here and in the rest of this paper we use the Einstein convention, i.e., the
above repeated indices of $i$ and $j$ within one term imply the summation from
$1$ to $n$.
\end{lemma}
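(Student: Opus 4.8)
The plan is to follow the classical strategy for proving It\^{o}'s formula: first establish the result for step-function coefficients and a partition of $[0,t]$, then pass to the limit. Since $\varphi$ and its first two derivatives are bounded, and since $\alpha,\beta,\eta$ are bounded in $M_{\ast}^{2}(0,T)$, all the It\^{o} integrals appearing on the right-hand side are well defined (by Lemma \ref{bdd} and Lemma \ref{Lemm3.12}, the integrands $\partial_{x_{i}}\varphi(X_{u})\beta_{u}^{i}$ etc. are again in $M_{\ast}^{2}(0,T)$). First I would fix a partition $0=t_{0}<t_{1}<\cdots<t_{N}=t$ with mesh tending to $0$ and write the telescoping sum
\[
\varphi(X_{t})-\varphi(X_{0})=\sum_{k=0}^{N-1}\left[\varphi(X_{t_{k+1}})-\varphi(X_{t_{k}})\right].
\]
For each increment I would apply the second-order Taylor expansion of $\varphi$ at $X_{t_{k}}$, obtaining a first-order term $\partial_{x_{i}}\varphi(X_{t_{k}})(X_{t_{k+1}}^{i}-X_{t_{k}}^{i})$, a second-order term $\tfrac12\partial_{x_{i}x_{j}}^{2}\varphi(X_{t_{k}})(X_{t_{k+1}}^{i}-X_{t_{k}}^{i})(X_{t_{k+1}}^{j}-X_{t_{k}}^{j})$, and a remainder controlled by the modulus of continuity of the second derivatives times $|X_{t_{k+1}}-X_{t_{k}}|^{2}$.

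Next I would substitute $X_{t_{k+1}}^{i}-X_{t_{k}}^{i}=\int_{t_{k}}^{t_{k+1}}\alpha_{s}^{i}ds+\int_{t_{k}}^{t_{k+1}}\eta_{s}^{i}d\langle B\rangle_{s}+\int_{t_{k}}^{t_{k+1}}\beta_{s}^{i}dB_{s}$ and expand. The $dB$ part of the first-order sum converges, in $\mathbb{L}^{2}(\Omega_{T})$, to $\int_{0}^{t}\partial_{x_{i}}\varphi(X_{u})\beta_{u}^{i}dB_{u}$; the $ds$ and $d\langle B\rangle_{s}$ parts converge to the corresponding Lebesgue/quadratic-variation integrals, all by the quasi-sure continuity of $u\mapsto X_{u}$ (Proposition \ref{p7} and the analogous statements for the $ds$, $d\langle B\rangle_{s}$ integrals) together with the isometry-type estimate \eqref{e2} and dominated convergence (Propositions \ref{Gt13}, \ref{Gt14}). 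For the second-order sum, the key point is that among the nine cross products $(X_{t_{k+1}}^{i}-X_{t_{k}}^{i})(X_{t_{k+1}}^{j}-X_{t_{k}}^{j})$ only the $dB\times dB$ term survives in the limit, contributing $\beta_{u}^{i}\beta_{u}^{j}d\langle B\rangle_{u}$ via the identity $\mathbf{I}_{\Delta_{k}}(B_{t_{k+1}}-B_{t_{k}})^{2}$ comparing with $\langle B\rangle_{t_{k+1}}-\langle B\rangle_{t_{k}}$; all terms involving at least one $ds$ or $d\langle B\rangle_{s}$ factor are $O(\text{mesh})$ because $ds$ and $d\langle B\rangle_{s}$ are absolutely continuous-type in the sense of having vanishing contribution when paired with another vanishing increment, while $(ds)^2$, $ds\,d\langle B\rangle$, etc., are negligible. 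The remainder term vanishes because $|X_{t_{k+1}}-X_{t_{k}}|^{2}$ summed over $k$ is bounded in $\mathbb{L}^{1}$ while the modulus of continuity factor tends to $0$ uniformly.

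The main obstacle I expect is justifying the convergence $\sum_{k}\partial_{x_{i}x_{j}}^{2}\varphi(X_{t_{k}})\,\mathbf{1}_{[t_{k},t_{k+1})}(B_{\cdot}$-increment$)^{2}\to\int_{0}^{t}\partial_{x_{i}x_{j}}^{2}\varphi(X_{u})\beta_{u}^{i}\beta_{u}^{j}d\langle B\rangle_{u}$ in a setting where expectations are sublinear. Classically one writes $(B_{t_{k+1}}-B_{t_{k}})^{2}=(\langle B\rangle_{t_{k+1}}-\langle B\rangle_{t_{k}})+(\text{martingale increment})$; the difficulty here is to control the cross terms between the $\beta dB$ parts of different increments and the replacement error $(B_{t_{k+1}}-B_{t_{k}})^{2}-(\langle B\rangle_{t_{k+1}}-\langle B\rangle_{t_{k}})$, using only the tools available, namely Proposition \ref{p1}, Proposition \ref{p2} (which gives the one-sided bound $\mathbb{\hat{E}}[\xi^{2}(B_{T}-B_{t})^{2}-\bar\sigma^{2}\xi^{2}(T-t)]\le 0$) and the $\mathbb{L}^{2}$-isometry \eqref{e2}. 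I would handle this by first proving the formula when $\alpha,\beta,\eta$ are themselves step processes in $M_{b,0}(0,T)$ — where on each subinterval one can invoke the already-known It\^{o} formula \eqref{eq0} of Peng applied to $\varphi$ restricted suitably, or a direct computation — and then extend to general bounded $\alpha,\beta,\eta\in M_{\ast}^{2}(0,T)$ by approximation: choosing step-process approximants converging in $M_{\ast}^{2}$, using \eqref{e2} and \eqref{6.0} to pass the stochastic integrals to the limit, using $\mathbb{L}^{2}$-continuity of $\varphi$ and its derivatives (boundedness of the derivatives makes $\varphi$ Lipschitz on the relevant range) to pass the left-hand side and the composition terms, and using the stability of $M_{\ast}^{2}$ under multiplication by bounded processes (Lemma \ref{Lemm3.12}) to ensure all limiting integrands lie in the right space.
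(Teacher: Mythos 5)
Your overall architecture is the same as the paper's: first establish the formula when the coefficients are (piecewise) constant via a partition and second-order Taylor expansion, kill the cross terms $ds\,dB$, $ds\,d\langle B\rangle$, $(ds)^2$, etc.\ by explicit second-moment estimates, identify the surviving $dB\times dB$ term with the $d\langle B\rangle$ integral, and then extend to general bounded coefficients in $M_{\ast}^{2}(0,T)$ by approximation, using the isometry-type bound \eqref{e2}, the maximal inequality \eqref{6.0}, and Lemma \ref{Lemm3.12} to pass to the limit. This is exactly the route of Lemma \ref{d-Lem-26} and the appendix proof.

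There is one step where your argument, as written, would fail: the treatment of the Taylor remainder. You claim it vanishes "because $|X_{t_{k+1}}-X_{t_{k}}|^{2}$ summed over $k$ is bounded in $\mathbb{L}^{1}$ while the modulus of continuity factor tends to $0$ uniformly." The factor $\omega\bigl(|X_{t_{k+1}}-X_{t_{k}}|\bigr)$ does not tend to $0$ uniformly in $\omega\in\Omega$; it tends to $0$ only quasi-surely (by path continuity), and converting that into convergence under $\mathbb{\hat{E}}$ is precisely a dominated-convergence argument, which is not available for a sublinear expectation (only monotone statements such as Proposition \ref{pr8} hold, and only for continuous functions). The paper avoids this entirely by assuming in the base lemma that the second derivatives are \emph{Lipschitz}, so the remainder is bounded by $c|X_{t_{k+1}}-X_{t_{k}}|^{3}$ and one gets the quantitative estimate $\mathbb{\hat{E}}[|\eta_{k}^{N}|^{2}]\leq C(\delta^{6}+\delta^{3})$, whence $\mathbb{\hat{E}}[|\sum_{k}\eta_{k}^{N}|^{2}]\leq N\sum_{k}\mathbb{\hat{E}}[|\eta_{k}^{N}|^{2}]\to0$; the reduction from merely bounded/uniformly continuous derivatives to Lipschitz ones is then done separately, at the level of $\varphi$, by uniform approximation (as in Lemma \ref{Lemma2}). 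You should either build that Lipschitz assumption into your base case and add the corresponding approximation of $\varphi$, or supply an explicit moment bound for the remainder; the qualitative "modulus of continuity" argument does not close in this framework.
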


The proof will be given in the appendix.

\begin{lemma}
\label{lemma5.1} Let $\varphi \in C^{2}(\mathbb{R}^{n})$ and its first and
second derivatives are in $C_{b,Lip}(\mathbb{R}^{n})$. Let $X_{t}^{i}%
=X_{0}^{i}+\int_{0}^{t}\alpha_{s}^{i}ds+\int_{0}^{t}\eta_{s}^{i}d\langle
B\rangle_{s}+\int_{0}^{t}\beta_{s}^{i}dB_{s}$, where $\alpha,\eta$ in
$M_{\ast}^{1}(0,T)$, $\beta \in M_{\ast}^{2}(0,T)$. Then for each $t\geq0$, we
have
\begin{align}
\varphi(X_{t})-\varphi(X_{0}) &  =\int_{0}^{t}\partial_{x_{i}}\varphi
(X_{u})\beta_{u}^{i}dB_{u}+\int_{0}^{t}\partial_{x_{i}}\varphi(X_{u}%
)\alpha_{u}^{i}du\label{eq5.1}\\
&  +\int_{0}^{t}[\partial_{x_{i}}\varphi(X_{u})\eta_{u}^{i}+\frac{1}%
{2}\partial_{x_{i}x_{j}}^{2}\varphi(X_{u})\beta_{u}^{i}\beta_{u}^{j}]d\langle
B\rangle_{u}.\nonumber
\end{align}

\end{lemma}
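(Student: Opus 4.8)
The plan is to deduce Lemma \ref{lemma5.1} from Lemma \ref{lemma1} by an approximation argument: Lemma \ref{lemma1} handles the case of a $C^2$ function whose first and second derivatives are merely bounded, so the only new content here is to relax the hypothesis that $\alpha,\eta,\beta$ are \emph{bounded} elements of $M_\ast^2(0,T)$ to $\alpha,\eta\in M_\ast^1(0,T)$ and $\beta\in M_\ast^2(0,T)$, while simultaneously strengthening the regularity of $\p$ to having first and second derivatives in $C_{b,Lip}(\R^n)$ (which is exactly what makes the coefficients $\z_{x_i}\p(X_u)\alpha_u^i$, etc., behave well under truncation of the $\alpha,\eta,\beta$). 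So the first step is a truncation: for each $n$ set $\alpha^{i,n}=(-n)\vee(\alpha^i\wedge n)$, and similarly $\eta^{i,n}$, $\beta^{i,n}$, and let $X_t^{i,n}$ be defined by the same formula with the truncated coefficients. By Lemma \ref{lemma1} the It\^o formula \eqref{eq5.1} holds for each $X^{\cdot,n}$.

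Second, I would pass to the limit $n\to\infty$ in every term. By Corollary \ref{p8} and Proposition \ref{Gt13} applied to $\alpha,\eta\in M_\ast^1$ and $\beta\in M_\ast^2$, we have $\alpha^{i,n}\to\alpha^i$, $\eta^{i,n}\to\eta^i$ in $M_\ast^1(0,T)$ and $\beta^{i,n}\to\beta^i$ in $M_\ast^2(0,T)$; combined with the estimates $\e[\int_0^T\eta_s dB_s]=0$, \eqref{e2}, \eqref{6.0} and the fact that $|\langle B\rangle_T-\langle B\rangle_t|\le\s^2(T-t)$ quasi-surely, this gives that $X_t^{i,n}\to X_t^i$ in $\mathbb{L}^2$, and in fact, along a subsequence, uniformly in $t$ quasi-surely (using Proposition \ref{p3} for the $dB$ part). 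Passing to this subsequence and relabelling, $X^{\cdot,n}\to X^{\cdot}$ uniformly on $[0,T]$ q.s.; consequently $\p(X_t^n)\to\p(X_t)$ q.s. for the left side, and, because $\z_{x_i}\p$ and $\z_{x_ix_j}^2\p$ are Lipschitz, the integrands $\z_{x_i}\p(X_u^n)$ and $\z_{x_ix_j}^2\p(X_u^n)$ converge q.s. to the right integrands with a uniform bound.

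Third, I would justify convergence of each integral on the right side of \eqref{eq5.1}. For the Lebesgue-type terms $\int_0^t \z_{x_i}\p(X_u^n)\alpha_u^{i,n}\,du$ and the $d\langle B\rangle$ terms, write $\z_{x_i}\p(X_u^n)\alpha_u^{i,n}-\z_{x_i}\p(X_u)\alpha_u^i = (\z_{x_i}\p(X_u^n)-\z_{x_i}\p(X_u))\alpha_u^{i,n}+\z_{x_i}\p(X_u)(\alpha_u^{i,n}-\alpha_u^i)$; the second piece goes to $0$ in $M_\ast^1$ since $\z_{x_i}\p$ is bounded and $\alpha^{i,n}\to\alpha^i$ in $M_\ast^1$, and the first piece is handled by the Lipschitz bound $|\z_{x_i}\p(X_u^n)-\z_{x_i}\p(X_u)|\le L|X_u^n-X_u|$ together with Proposition \ref{Gt14} (taking $X_u=|X^n-X|$-type process bounded and small in $M_\ast^1$ after a further truncation, and $\eta_t=$ the coefficient). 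For the second-order term $\frac12\z_{x_ix_j}^2\p(X_u^n)\beta_u^{i,n}\beta_u^{j,n}$ one uses that $\beta^{i,n}\beta^{j,n}\to\beta^i\beta^j$ in $M_\ast^1$ (by Lemma \ref{Lemm3.12}-type reasoning plus $M_\ast^2\cdot M_\ast^2\subset M_\ast^1$) and the same splitting. For the stochastic integral term $\int_0^t\z_{x_i}\p(X_u^n)\beta_u^{i,n}\,dB_u$, I apply the isometry-type bound \eqref{e2}: the $M_\ast^2$-norm of $\z_{x_i}\p(X_u^n)\beta_u^{i,n}-\z_{x_i}\p(X_u)\beta_u^i$ tends to $0$ by the same two-term splitting (now in $M_\ast^2$, using boundedness of $\z_{x_i}\p$ and $\beta^{i,n}\to\beta^i$ in $M_\ast^2$, and the Lipschitz estimate controlled via Proposition \ref{Gt14}), and hence the It\^o integrals converge in $\mathbb{L}^2$. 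Matching the q.s.\ limits of both sides yields \eqref{eq5.1}.

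The main obstacle is the first piece of the splitting in the third step, namely controlling $\int_0^t|\z_{x_i}\p(X_u^n)-\z_{x_i}\p(X_u)|\,|\beta_u|\,du$ (and the analogous $dB$- and $d\langle B\rangle$-terms) uniformly in $n$: the factor $|X_u^n-X_u|$ is small in $M_\ast^2$ but the product with $|\beta_u|$ need not be small unless one invokes the uniform integrability packaged in Propositions \ref{Gt13}--\ref{Gt14}. The clean way is: fix $\varepsilon>0$, choose via Proposition \ref{Gt14} a $\delta$ so that $\e\int_0^T|\beta_u|^2|\zeta_u|\,du\le\varepsilon$ whenever $\|\zeta\|_\infty\le1$ and $\e\int_0^T|\zeta_u|\,du\le\delta$; then take $\zeta_u=1\wedge|X_u^n-X_u|$, which has small $M_\ast^1$-norm for large $n$, and split $|\z_{x_i}\p(X_u^n)-\z_{x_i}\p(X_u)|\le L(1\wedge|X_u^n-X_u|) + 2\|\z_{x_i}\p\|_\infty 1_{\{|X_u^n-X_u|>1\}}$, the last indicator also being small. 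This is exactly the role Propositions \ref{Gt13} and \ref{Gt14} were set up to play, so once that estimate is in place the rest is routine bookkeeping over the finitely many terms and indices.
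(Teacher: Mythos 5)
Your proposal is correct and is essentially the paper's own argument: the paper likewise approximates $\alpha,\eta,\beta$ by bounded processes, applies the bounded-coefficient It\^{o} formula (Lemma \ref{lemma1}) to the approximating $X^{(k)}$, and passes to the limit via the same two-term splitting, using the boundedness and Lipschitz continuity of $\partial_x\varphi$, $\partial^2_{xx}\varphi$ together with Proposition \ref{Gt14} to get convergence of the integrands in $M_{\ast}^{2}$ (for the $dB$ term) and $M_{\ast}^{1}$ (for the $du$ and $d\langle B\rangle$ terms). Your extra care about the uniform-integrability step (choosing $\zeta_u=1\wedge|X^n_u-X_u|$ in Proposition \ref{Gt14}) only makes explicit what the paper leaves implicit.
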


\begin{proof}
For simplicity, we only state for the case where $n=1$. Let $\alpha^{(k)}$,
$\beta^{(k)}$ and $\eta^{(k)}$ bounded processes such that, as $k\rightarrow
\infty$,
\[
\alpha^{(k)}\rightarrow \alpha,\eta^{(k)}\rightarrow \eta \text{ in }M_{\ast}%
^{1}(0,T)\text{ and }\beta^{(k)}\rightarrow \beta \text{, in }M_{\ast}^{2}(0,T)
\]
and let
\[
X_{t}^{(k)}=X_{0}+\int_{0}^{t}\alpha_{s}^{(k)}ds+\int_{0}^{t}\eta_{s}%
^{(k)}d\langle B\rangle_{s}+\int_{0}^{t}\beta_{s}^{(k)}dB_{s}.
\]
Then we have%
\[
\lim_{k\rightarrow \infty}\mathbb{\hat{E}}[\sup_{0\leq t\leq T}|X_{t}%
-X_{t}^{(k)}|^{2}]=0.
\]
We see that%
\begin{align*}
\mathbb{\hat{E}}\int_{0}^{T}|\partial_{x}\varphi(X_{t}^{(k)})\beta_{t}%
^{(k)}-\partial_{x}\varphi(X_{t})\beta_{t}|^{2}dt &  \leq \mathbb{\hat{E}}%
\int_{0}^{T}|\partial_{x}\varphi(X_{t}^{(k)})\beta_{t}^{(k)}-\partial
_{x}\varphi(X_{t}^{(k)})\beta_{t}|^{2}dt\\
&  +\mathbb{\hat{E}}\int_{0}^{T}|\partial_{x}\varphi(X_{t}^{(k)})\beta
_{t}-\partial_{x}\varphi(X_{t})\beta_{t}|^{2}dt\\
&  \leq C\mathbb{\hat{E}}\int_{0}^{T}|\beta_{t}^{(k)}-\beta_{t}|^{2}dt\\
&  +\mathbb{\hat{E}[}\int_{0}^{T}|\beta_{t}|^{2}|\partial_{x}\varphi
(X_{t}^{(k)})-\partial_{x}\varphi(X_{t})|^{2}dt].
\end{align*}
But we have $\sup_{0\leq t\leq T}|\partial_{x}\varphi(X_{t}^{(k)}%
)-\partial_{x}\varphi(X_{t})|^{2}\leq c$ and
\[
\mathbb{\hat{E}[}\int_{0}^{T}|\partial_{x}\varphi(X_{t}^{(k)})-\partial
_{x}\varphi(X_{t})|^{2}dt\rightarrow0\text{, as }k\rightarrow \infty.
\]
Thus we can apply Proposition \ref{Gt14} to prove that $\partial_{x}%
\varphi(X^{(k)})\beta^{(k)}\rightarrow$ $\partial_{x}\varphi(X)\beta$ in
$M_{\ast}^{2}(0,T)$. Similarly, $\partial_{x}\varphi(X^{(k)})\alpha
^{(k)}\rightarrow \partial_{x}\varphi(X)\alpha$, $\partial_{x}\varphi
(X^{(k)})\eta^{(k)}\rightarrow \partial_{x}\varphi(X)\eta$ and $\partial
_{xx}^{2}\varphi(X^{(k)})(\beta^{(k)})^{2}\rightarrow \partial_{xx}^{2}%
\varphi(X)\beta^{2}$ in $M_{\ast}^{1}(0,T)$. But from the above lemma we have
\begin{align*}
\varphi(X_{t}^{(k)})-\varphi(X_{0}^{(k)}) &  =\int_{0}^{t}\partial_{x}%
\varphi(X_{u}^{(k)})\beta_{u}^{(k)}dB_{u}+\int_{0}^{t}\partial_{x}%
\varphi(X_{u}^{(k)})\alpha_{u}^{(k)}du\\
&  +\int_{0}^{t}[\partial_{x}\varphi(X_{u}^{(k)})\eta_{u}^{(k)}+\frac{1}%
{2}\partial_{xx}^{2}\varphi(X_{u}^{(k)})(\beta_{u}^{(k)})^{2}]d\langle
B\rangle_{u}.
\end{align*}
We then can pass to the limit on both sides of the above equality, as
$k\rightarrow \infty$, to obtain (\ref{eq5.1}).
\end{proof}

\begin{lemma}
\label{Lemma2}Let $X$ be given as the above lemma and let $\varphi \in
C^{1,2}([0,\infty)\times \mathbb{R}^{n})$ such that $\varphi$, $\partial
_{t}\varphi$, $\partial_{x}\varphi$ and $\partial_{xx}^{2}\varphi$ are bounded
and uniformly continuous on $[0,\infty)\times \mathbb{R}^{n}$. Then we have%
\begin{align*}
\varphi(t,X_{t})-\varphi(0,X_{0}) &  =\int_{0}^{t}\partial_{x_{i}}%
\varphi(u,X_{u})\beta_{u}^{i}dB_{u}+\int_{0}^{t}[\partial_{t}\varphi
(u,X_{u})+(\partial_{x_{i}}\varphi(X_{u})\alpha_{u}^{i}]du\\
&  +\int_{0}^{t}[\partial_{x_{i}}\varphi(X_{u})\eta_{u}^{i}+\frac{1}%
{2}\partial_{x_{i}x_{j}}^{2}\varphi(X_{u})\beta_{u}^{i}\beta_{u}^{j}]d\langle
B\rangle_{u}.
\end{align*}

\end{lemma}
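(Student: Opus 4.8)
\textbf{Proof proposal for Lemma \ref{Lemma2}.}

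The plan is to reduce the time-dependent It\^{o} formula to the time-homogeneous version in Lemma \ref{lemma5.1} by the standard trick of adjoining time as an extra component of the state process. First I would set $X_{t}^{0}:=t$, so that $X_{t}^{0}=0+\int_{0}^{t}1\,ds+\int_{0}^{t}0\,d\langle B\rangle_{s}+\int_{0}^{t}0\,dB_{s}$; that is, $X^{0}$ is of exactly the same form as the other components $X^{i}$, with coefficients $\alpha^{0}\equiv 1$, $\eta^{0}\equiv 0$, $\beta^{0}\equiv 0$, all trivially bounded elements of the relevant $M_{\ast}$ spaces. Define $\tilde{X}=(X^{0},X^{1},\dots,X^{n})$ and $\tilde{\varphi}(x_{0},x_{1},\dots,x_{n}):=\varphi(x_{0},x_{1},\dots,x_{n})$, regarded now as a function on $\mathbb{R}^{n+1}$ rather than on $[0,\infty)\times\mathbb{R}^{n}$. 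The hypotheses on $\varphi$ — that $\varphi$, $\partial_{t}\varphi$, $\partial_{x}\varphi$, $\partial_{xx}^{2}\varphi$ are bounded and uniformly continuous on $[0,\infty)\times\mathbb{R}^{n}$ — are precisely what is needed to ensure that $\tilde{\varphi}\in C^{2}(\mathbb{R}^{n+1})$ with first and second derivatives in $C_{b,Lip}$, i.e. the hypotheses of Lemma \ref{lemma5.1}. (This is the one point deserving care: $C^{1,2}$ in $(t,x)$ gives a $C^{2}$ function in $(t,x)$ jointly only because no second $t$-derivative is demanded; the mixed derivatives $\partial_{t}\partial_{x_i}\tilde\varphi$ that Lemma \ref{lemma5.1} would in principle involve do not actually appear below because $\langle X^0\rangle$ and the cross-variations of $X^0$ with $B$ vanish.)

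Next I would apply Lemma \ref{lemma5.1} to $\tilde{\varphi}(\tilde{X}_{t})$. This yields
\begin{align*}
\tilde{\varphi}(\tilde{X}_{t})-\tilde{\varphi}(\tilde{X}_{0}) &=\sum_{i=0}^{n}\int_{0}^{t}\partial_{x_{i}}\tilde{\varphi}(\tilde{X}_{u})\beta_{u}^{i}dB_{u}+\sum_{i=0}^{n}\int_{0}^{t}\partial_{x_{i}}\tilde{\varphi}(\tilde{X}_{u})\alpha_{u}^{i}du\\
&+\int_{0}^{t}\Big[\sum_{i=0}^{n}\partial_{x_{i}}\tilde{\varphi}(\tilde{X}_{u})\eta_{u}^{i}+\frac{1}{2}\sum_{i,j=0}^{n}\partial_{x_{i}x_{j}}^{2}\tilde{\varphi}(\tilde{X}_{u})\beta_{u}^{i}\beta_{u}^{j}\Big]d\langle B\rangle_{u}.
\end{align*}
Now I would substitute the values of the zeroth coefficients. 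Since $\beta^{0}\equiv 0$, every term in which the index $0$ appears as a $\beta$-index drops out: this kills the $i=0$ term in the $dB_{u}$ sum, the $i=0$ term in the $\frac{1}{2}\partial^{2}\beta^{i}\beta^{j}$ sum, and every cross term $i=0$ or $j=0$ in that double sum. Since $\eta^{0}\equiv 0$, the $i=0$ term of the $\eta$-sum in the $d\langle B\rangle_{u}$ integral also vanishes. The only surviving contribution of the new coordinate is the $i=0$ term in the $du$ integral, which is $\int_{0}^{t}\partial_{x_{0}}\tilde{\varphi}(\tilde{X}_{u})\cdot 1\,du=\int_{0}^{t}\partial_{t}\varphi(u,X_{u})\,du$. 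Identifying $\partial_{x_{i}}\tilde{\varphi}(\tilde{X}_{u})$ with $\partial_{x_{i}}\varphi(u,X_{u})$ for $i\geq 1$, and likewise for the second derivatives, and recalling $\tilde{X}_{0}=(0,X_{0})$, the displayed identity becomes exactly the claimed formula, with the $\partial_{t}\varphi(u,X_{u})\,du$ term absorbed into the $du$-integral alongside $\partial_{x_{i}}\varphi(u,X_{u})\alpha_{u}^{i}$.

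I do not anticipate a serious obstacle here; the argument is a bookkeeping reduction. The only genuine point to verify is that $\tilde{\varphi}$ meets the smoothness hypothesis of Lemma \ref{lemma5.1} — boundedness is immediate from the stated boundedness of $\varphi,\partial_t\varphi,\partial_x\varphi,\partial_{xx}^2\varphi$, and the Lipschitz-continuity of the first and second derivatives of $\tilde\varphi$ follows from their stated uniform continuity together with boundedness (a bounded uniformly continuous function that is itself a derivative of a bounded function can be approximated so that Lemma \ref{lemma5.1} applies, or one first proves the formula for mollified $\varphi$ and passes to the limit exactly as in the proof of Lemma \ref{lemma5.1}, using Proposition \ref{Gt14} and $\sup$-bounds on the derivatives). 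If one prefers to avoid any subtlety about $C_{b,Lip}$ versus merely bounded uniformly continuous derivatives, the cleanest route is to mollify: let $\varphi^{(k)}=\varphi*\rho_{k}$, apply the already-established formula to each $\varphi^{(k)}$, and pass to the limit using that $\partial\varphi^{(k)}\to\partial\varphi$ uniformly on compacts together with the uniform boundedness of all derivatives, invoking Proposition \ref{Gt14} for the convergence of the integrands in $M_{\ast}^{1}(0,T)$ and $M_{\ast}^{2}(0,T)$ just as in Lemma \ref{lemma5.1}. Either way the conclusion follows.
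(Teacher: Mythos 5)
Your proposal is correct and follows essentially the same route as the paper: the paper likewise adjoins time as an extra state component $X_t^0\equiv t$, applies the time-homogeneous formula of Lemma \ref{lemma5.1}, and handles the regularity gap by first approximating $\varphi$ by functions $\varphi_k$ with all first and second derivatives in $C_{b,Lip}$ and then passing to the limit using the uniform convergence of $\varphi_k,\partial_t\varphi_k,\partial_x\varphi_k,\partial_{xx}^2\varphi_k$. Your closing remark that one should mollify rather than apply Lemma \ref{lemma5.1} directly to $\tilde\varphi$ is exactly the point the paper's proof addresses, so no gap remains.
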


\begin{proof}
We can take $\{ \varphi_{k}\}_{k=1}^{\infty}$ such that, for each $k$,
$\varphi_{k}$ and all its first order and second order derivatives are in
$C_{b,Lip}^{2,2}((-\infty,\infty)\times \mathbb{R}^{n})$ and such that, as
$n\rightarrow \infty$, $\varphi_{n}$, $\partial_{t}\varphi_{n}$, $\partial
_{x}\varphi_{n}$ and $\partial_{xx}^{2}\varphi_{n}$ converge respectively to
$\varphi$, $\partial_{t}\varphi$, $\partial_{x}\varphi$ and $\partial_{xx}%
^{2}\varphi$ uniformly on $[0,\infty)\times \mathbb{R}$. We then use the above
It\^{o}'s formula to $\varphi_{n}(X_{t}^{0},X_{t})$, with $Y_{t}=(X_{t}%
^{0},X_{t})$, wiith $X_{t}^{0}\equiv t$:%
\begin{align*}
\varphi_{k}(t,X_{t})-\varphi_{k}(0,X_{0}) &  =\int_{0}^{t}\partial_{x_{i}%
}\varphi_{k}(u,X_{u})\beta_{u}^{i}dB_{u}+\int_{0}^{t}[\partial_{t}\varphi
_{k}(u,X_{u})+(\partial_{x_{i}}\varphi_{k}(u,X_{u})\alpha_{u}^{i}]du\\
&  +\int_{0}^{t}[\partial_{x_{i}}\varphi_{k}(u,X_{u})\eta_{u}^{i}+\frac{1}%
{2}\partial_{x_{i}x_{j}}^{2}\varphi_{k}(u,X_{u})\beta_{u}^{i}\beta_{u}%
^{j}]d\langle B\rangle_{u}.
\end{align*}
It follows that, as $k\rightarrow \infty$, we have uniformly
\begin{align*}
|\partial_{x_{i}}\varphi_{k}(u,X_{u})-\partial_{x_{i}}\varphi(u,X_{u})|  &
\rightarrow0\text{, }|\partial_{x_{i}x_{j}}^{2}\varphi_{k}(u,X_{u}%
)-\partial_{x_{i}x_{j}}^{2}\varphi_{k}(u,X_{u})|\rightarrow0\text{,}\\
|\partial_{t}\varphi_{k}(u,X_{u})-\partial_{t}\varphi(u,X_{u})|  &
\rightarrow0\text{.  }%
\end{align*}
We then can apply the above Lemma to $\varphi_{k}(t,X_{t})-\varphi_{k}%
(0,X_{0})$ and pass to the limit as $k\rightarrow \infty$ to obtain the desired reslut.
\end{proof}

\begin{theorem}
Let $\varphi \in C^{1,2}([0,\infty)\times \mathbb{R})$ and $X_{t}=X_{0}+\int
_{0}^{t}\alpha_{s}ds+\int_{0}^{t}\eta_{s}d\langle B\rangle_{s}+\int_{0}%
^{t}\beta_{s}dB_{s}$, where $\alpha,\eta$ in $M_{\omega}^{1}(0,T)$ and
$\beta \in M_{\omega}^{2}(0,T)$. Then for each $t\geq0$, we have
\begin{align*}
\varphi(t,X_{t})-\varphi(0,X_{0}) &  =\int_{0}^{t}\partial_{x_{i}}%
\varphi(u,X_{u})\beta_{u}^{i}dB_{u}+\int_{0}^{t}[\partial_{t}\varphi
(u,X_{u})+\partial_{x_{i}}\varphi(u,X_{u})\alpha_{u}^{i}]du\\
&  +\int_{0}^{t}[\partial_{x_{i}}\varphi(u,X_{u})\eta_{u}^{i}+\frac{1}%
{2}\partial_{x_{i}x_{j}}^{2}\varphi(u,X_{u})\beta_{u}^{i}\beta_{u}%
^{j}]d\langle B\rangle_{u}.
\end{align*}

\end{theorem}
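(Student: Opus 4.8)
The plan is to localize using stopping times, exactly as in the classical theory, and then invoke Lemma \ref{Lemma2} (the bounded, uniformly continuous case) on each localization interval. First I would reduce to $n=1$ for notational simplicity, as is done throughout the paper. Since $\alpha,\eta\in M_{\omega}^{1}(0,T)$ and $\beta\in M_{\omega}^{2}(0,T)$, Definition \ref{Def4.4} furnishes sequences of increasing stopping times for each; by the Remark following that definition we may take a single sequence $\{\sigma_{m}\}$ with $\sigma_{m}\uparrow T$ q.s. serving all three processes simultaneously. Then I would set
\[
\tau_{m}=\inf\{t\geq0:\textstyle\int_{0}^{t}(|\alpha_{s}|+|\eta_{s}|)\,ds>m\ \text{ or }\ \int_{0}^{t}|\beta_{s}|^{2}ds>m\}\wedge\sigma_{m},
\]
so that $\tau_{m}\uparrow T$ q.s. (using the finiteness condition $\inf_{P\in\mathcal{P}}P(\int_{0}^{T}|\eta_{s}|^{p}ds<\infty)=1$ from Definition \ref{Def4.4}), and by Lemma \ref{t1} and Lemma \ref{Lemm4.6}/Proposition \ref{Prop4.7} the stopped integrands $\alpha\mathbf{I}_{[0,\tau_{m}]}$, $\eta\mathbf{I}_{[0,\tau_{m}]}$, $\beta\mathbf{I}_{[0,\tau_{m}]}$ lie in the appropriate $M_{\ast}^{p}(0,T)$ spaces and the stopped process $X_{t\wedge\tau_{m}}=X_{0}+\int_{0}^{t\wedge\tau_{m}}\alpha_{s}ds+\int_{0}^{t\wedge\tau_{m}}\eta_{s}d\langle B\rangle_{s}+\int_{0}^{t\wedge\tau_{m}}\beta_{s}dB_{s}$ is a well-defined quasi-surely continuous process.

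Next, $\varphi$ is only assumed $C^{1,2}$ with no boundedness, so I would introduce a cutoff in the space variable. On the event $\Omega_{m}=\{\tau_{m}=T\}$ the process $X$ stays in a bounded region: more precisely, for a fixed $R>0$ let $\rho_{R}=\inf\{t:|X_{t}-X_{0}|\geq R\}$ and work on $\{\rho_{R}\wedge\tau_{m}=T\}$; on this set $X$ takes values in the ball $\bar B(X_{0},R)$ (recall $X_{0}$ is a constant here, or one reduces to that case). Choose $\psi_{R}\in C^{1,2}$ with $\psi_{R},\partial_{t}\psi_{R},\partial_{x}\psi_{R},\partial_{xx}^{2}\psi_{R}$ bounded and uniformly continuous and $\psi_{R}\equiv\varphi$ on $[0,T]\times\bar B(X_{0},R+1)$. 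Applying Lemma \ref{Lemma2} to $\psi_{R}$ with the bounded stopped integrands gives It\^o's formula for $\psi_{R}(t\wedge\rho_R\wedge\tau_{m},X_{t\wedge\rho_R\wedge\tau_{m}})$; since $\psi_{R}$ and all relevant derivatives agree with $\varphi$ along the stopped path, the same identity holds with $\varphi$ in place of $\psi_{R}$ on $\{\rho_{R}\wedge\tau_{m}=T\}$.

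Finally I would remove the localization. Let $m\to\infty$: since $\tau_{m}\uparrow T$ q.s. and $\hat{c}((\bigcup_{m}\Omega_{m})^{c})=0$, and since, by quasi-sure continuity of $X$ (Proposition \ref{Prop4.7}), $\rho_{R}\uparrow T$ q.s. as $R\to\infty$ on $\{X \text{ is continuous}\}$ — equivalently $\hat{c}(\bigcap_{R}\{\rho_{R}<T\})=0$ because a continuous path on $[0,T]$ is bounded — the event $\{\rho_{R}\wedge\tau_{m}=T\}$ increases q.s.\ to a set of full capacity as $R,m\to\infty$. On that full set the localized It\^o formula becomes the claimed one, because each stochastic and ordinary integral over $[0,t]$ agrees q.s.\ with its stopped version on $\{\rho_{R}\wedge\tau_{m}=T\}$ (using equation (\ref{e3-1}) of the paper and the additivity Proposition \ref{p5}). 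The main obstacle I anticipate is the bookkeeping in this last gluing step: one must check that the integrals appearing in the statement are themselves well-defined as quasi-surely continuous processes for integrands merely in $M_{\omega}$ (which requires the new $M_{\omega}$ machinery of Section 4, in particular Proposition \ref{Prop4.7} and Corollary \ref{Coro4.8} to see that $\partial_{x}\varphi(u,X_{u})\beta_{u}$ etc.\ belong to $M_{\omega}^{2}$), and that passing to the limit in $R$ and $m$ is compatible q.s.\ on the nested exhausting sequence of events rather than merely pointwise; the $G$-expectation setting forces all these identifications to be made "quasi-surely," not almost surely under a single measure.
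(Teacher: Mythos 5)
Your proposal is correct and follows essentially the same route as the paper: localize by stopping times built from $\sigma_m$ together with the running integrals of $|\alpha|$, $|\eta|$, $|\beta|^2$ and the excursion of $X$, truncate $\varphi$ to a $C^{1,2}$ function with bounded derivatives agreeing with $\varphi$ on the relevant region, apply the bounded-coefficient It\^{o} formula (Lemma \ref{Lemma2}) to the stopped process, and pass to the limit using Corollary \ref{Coro4.8} and quasi-sure continuity. The only cosmetic difference is that the paper absorbs your separate spatial stopping time $\rho_R$ into a single functional $\gamma_t=|X_t-X_0|+\int_0^t(|\beta_u|^2+|\alpha_u|+|\eta_u|)\,du$ and hence a single stopping time $\tau_k$.
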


\begin{proof}
We set, for $k=1,2,\cdots,$%
\[
\gamma_{t}=|X_{t}-X_{0}|+\int_{0}^{t}(|\beta_{u}|^{2}+|\alpha_{u}|+|\eta
_{u}|)du
\]
and $\tau_{k}:=\inf \{t\geq0|\gamma_{t}>k\} \wedge \sigma_{k}$. Let $\varphi_{k}$
be a $C^{1,2}$-function on $[0,\infty)\times \mathbb{R}^{n}$ such that
$\varphi$, $\partial_{t}\varphi$, $\partial_{x_{i}}\varphi$ and $\partial
_{x_{i}x_{j}}^{2}\varphi$ are uniformly bounded and such that $\varphi
_{k}=\varphi$, for $|x|\leq2k$, $t\in \lbrack0,T]$. It is clear that
\[
\mathbf{I}_{[0,\tau_{k}]}\beta \in M_{\ast}^{2}(0,T),\  \  \mathbf{I}%
_{[0,\tau_{k}]}\alpha,\  \  \mathbf{I}_{[0,\tau_{k}]}\eta \in M_{\ast}%
^{1}(0,T)\text{ }%
\]
and we have%
\[
X_{t\wedge \tau_{k}}^{i}=X_{0}^{i}+\int_{0}^{t}\alpha_{s}^{i}\mathbf{I}%
_{[0,\tau_{k}]}ds+\int_{0}^{t}\eta_{s}^{i}\mathbf{I}_{[0,\tau_{k}]}d\langle
B\rangle_{s}+\int_{0}^{t}\beta_{s}^{i}\mathbf{I}_{[0,\tau_{k}]}dB_{s}%
\]
We then can apply the above lemma to $\varphi_{k}(s,X_{s\wedge \tau_{k}})$,
$s\in \lbrack0,t]$ to obtain
\begin{align*}
\varphi(t,X_{t\wedge \tau_{k}})-\varphi(0,X_{0}) &  =\int_{0}^{t}%
\partial_{x_{i}}\varphi(u,X_{u})\beta_{u}^{i}\mathbf{I}_{[0,\tau_{k}]}%
dB_{u}+\int_{0}^{t}[\partial_{t}\varphi(u,X_{u})+\partial_{x_{i}}%
\varphi(u,X_{u})\alpha_{u}^{i}]\mathbf{I}_{[0,\tau_{k}]}du\\
&  +\int_{0}^{t}[\partial_{x_{i}}\varphi(u,X_{u})\eta_{u}^{i}\mathbf{I}%
_{[0,\tau_{k}]}+\frac{1}{2}\partial_{x_{i}x_{j}}^{2}\varphi(u,X_{u})\beta
_{u}^{i}\beta_{u}^{j}\mathbf{I}_{[0,\tau_{k}]}]d\langle B\rangle_{u}.
\end{align*}
Passing to the limit as $k\rightarrow \infty$ and applying Corollary
\ref{Coro4.8}, we then obtain the desired result.
\end{proof}

\begin{example}
\label{Exam5.5}For a given $\varphi \in C^{2}(\mathbb{R})$ we have%
\[
\varphi(B_{t})-\varphi(B_{t_{0}})=\int_{t_{0}}^{t}\varphi_{x}(B_{s}%
)dB_{s}+\frac{1}{2}\int_{0}^{t}\varphi_{xx}(B_{s})d\left \langle B\right \rangle
_{s}.
\]

\end{example}

\section{Appendix: Proof of Lemma \ref{lemma1}}

The proof is of Lemma \ref{lemma1} is very similar to those of Lemma 46 and
proposition 48 in Peng \cite{Peng2006b}  (see also \cite{Peng2007}). {W{{e
first consider the following simple case}}}.{{{ }}}

\begin{lemma}
\label{d-Lem-26}Let $\Phi \in C^{2}(\mathbb{R}^{n})$ with $\partial_{x^{v}}%
\Phi,\  \partial_{x^{\mu}x^{v}}^{2}\Phi \in C_{b.Lip}(\mathbb{R}^{n})$ for
$\mu,v=1,\cdots,n$. Let $s\in \lbrack0,T]$ be fixed and let $X=(X^{1}%
,\cdots,X^{n})^{T}$ be an $n$--dimensional process on $[s,T]$ of the form
\[
X_{t}^{j}=X_{s}^{j}+\alpha^{j}(t-s)+\eta^{j}(\left \langle B\right \rangle
_{t}-\left \langle B\right \rangle _{s})+\beta^{j}(B_{t}-B_{s}),
\]
where, for $j=1,\cdots,n$, $\alpha^{j}$, $\eta^{j}$ and $\beta^{j}$ are
bounded elements in $L_{\ast}^{2}(\Omega_{s})$ and $X_{s}=(X_{s}^{1}%
,\cdots,X_{s}^{n})^{T}$ is a given random vector in $L_{\ast}^{2}(\Omega_{s}%
)$. Then we have, in $L_{G}^{2}(\Omega_{t})$
\begin{align}
\Phi(X_{t})-\Phi(X_{s}) &  =\int_{s}^{t}\partial_{x^{j}}\Phi(X_{u})\beta
^{j}dB_{u}+\int_{s}^{t}\partial_{x^{j}}\Phi(X_{u})\alpha^{j}du\label{d-B-Ito}%
\\
&  +\int_{s}^{t}[\partial_{x^{j}}\Phi(X_{u})\eta^{j}+\frac{1}{2}%
\partial_{x^{i}x^{j}}^{2}\Phi(X_{u})\beta^{i}\beta^{j}]d\left \langle
B\right \rangle _{u}.\nonumber
\end{align}

\end{lemma}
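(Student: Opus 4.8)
\textbf{Proof proposal for Lemma \ref{d-Lem-26}.}

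The plan is to reduce the general statement to a finite-difference approximation along a partition of $[s,t]$, following the scheme of Peng \cite{Peng2006b}. First I would fix a partition $s=t_0<t_1<\cdots<t_N=t$ with mesh $\mu_N=\max_k(t_{k+1}-t_k)\to 0$, and write the telescoping identity
\[
\Phi(X_t)-\Phi(X_s)=\sum_{k=0}^{N-1}\big[\Phi(X_{t_{k+1}})-\Phi(X_{t_k})\big].
\]
Applying Taylor's formula to each increment and using the explicit form $X^j_{t_{k+1}}-X^j_{t_k}=\alpha^j(t_{k+1}-t_k)+\eta^j(\langle B\rangle_{t_{k+1}}-\langle B\rangle_{t_k})+\beta^j(B_{t_{k+1}}-B_{t_k})$, I would expand each term up to second order, so that
\[
\Phi(X_{t_{k+1}})-\Phi(X_{t_k})=\partial_{x^j}\Phi(X_{t_k})\,(X^j_{t_{k+1}}-X^j_{t_k})+\tfrac12\partial^2_{x^ix^j}\Phi(X_{t_k})\,(X^i_{t_{k+1}}-X^i_{t_k})(X^j_{t_{k+1}}-X^j_{t_k})+R_k,
\]
with $R_k$ the second-order Taylor remainder. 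The first sum, after substituting the expression for the increments, splits into a $dB$-type Riemann sum, a $du$-type sum, and a $d\langle B\rangle$-type sum; by the definition of It\^o's integral for step integrands and the (quasi-sure) continuity of $B$ and $\langle B\rangle$, together with the continuity in $L^2_G$ of the processes $\partial_{x^j}\Phi(X_u)\alpha^j$, $\partial_{x^j}\Phi(X_u)\eta^j$, $\partial_{x^j}\Phi(X_u)\beta^j$ (these lie in $M^2_\ast(s,t)$ since $\partial_{x^j}\Phi$ is bounded Lipschitz and $X$ is continuous), these Riemann sums converge in $L^2_G(\Omega_t)$ to the first three integrals on the right-hand side of \eqref{d-B-Ito}.

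The second-order sum is where the $G$-Brownian character enters. Expanding $(X^i_{t_{k+1}}-X^i_{t_k})(X^j_{t_{k+1}}-X^j_{t_k})$ produces terms in $(\Delta t)^2$, $(\Delta\langle B\rangle)^2$, $\Delta t\,\Delta\langle B\rangle$, $\Delta t\,\Delta B$, $\Delta\langle B\rangle\,\Delta B$, and the crucial $\beta^i\beta^j(\Delta B)^2$. The first five groups I would show vanish in $L^2_G$ as $\mu_N\to 0$: the ones containing a $\Delta t$ factor are $O(\mu_N)$ after using boundedness and $\langle B\rangle_t-\langle B\rangle_s\le\bar\sigma^2(t-s)$; the $\Delta\langle B\rangle\,\Delta B$ terms are handled as in Peng by a martingale/orthogonality estimate using Proposition \ref{p1}-type cancellation. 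For the term $\sum_k\tfrac12\partial^2_{x^ix^j}\Phi(X_{t_k})\beta^i\beta^j(B_{t_{k+1}}-B_{t_k})^2$, the key point is the identity $(B_{t_{k+1}}-B_{t_k})^2=(\langle B\rangle_{t_{k+1}}-\langle B\rangle_{t_k})+2\int_{t_k}^{t_{k+1}}(B_u-B_{t_k})dB_u$, valid by the one-step It\^o formula already in Peng; substituting it converts the $(\Delta B)^2$ sum into a $d\langle B\rangle$ Riemann sum (converging to $\tfrac12\int_s^t\partial^2_{x^ix^j}\Phi(X_u)\beta^i\beta^j d\langle B\rangle_u$) plus a remainder of the form $\sum_k \partial^2_{x^ix^j}\Phi(X_{t_k})\beta^i\beta^j\int_{t_k}^{t_{k+1}}(B_u-B_{t_k})dB_u$, whose $L^2_G$-norm I would bound using \eqref{e2} of Lemma \ref{bdd} and show tends to $0$.

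Finally I would dispose of the Taylor remainders $\sum_k R_k$: since $\partial^2\Phi$ is bounded and uniformly continuous (being in $C_{b,Lip}$) and $X$ is quasi-surely continuous, $\sup_k\sup_{t_k\le u\le t_{k+1}}|\partial^2\Phi(X_u)-\partial^2\Phi(X_{t_k})|\to 0$, so $|\sum_k R_k|\le o(1)\sum_k|X_{t_{k+1}}-X_{t_k}|^2$, and $\hat{\mathbb{E}}\sum_k|X_{t_{k+1}}-X_{t_k}|^2$ stays bounded by the same estimates (boundedness of $\alpha,\eta,\beta$, $\langle B\rangle_t-\langle B\rangle_s\le\bar\sigma^2(t-s)$, and $\hat{\mathbb{E}}[(B_{t_{k+1}}-B_{t_k})^2]\le\bar\sigma^2(t_{k+1}-t_k)$). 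I expect the main obstacle to be the convergence of the $(\Delta B)^2$ sum — specifically, controlling the cross-remainder term in $L^2_G(\Omega_t)$ rather than merely under each $P\in\mathcal{P}$ — since sublinearity means one cannot freely use classical $L^2$ isometry identities; the substitute is to reduce everything to the inequalities \eqref{e1}, \eqref{e2} and Propositions \ref{p1}, \ref{p2}, which is exactly why those were proved first.
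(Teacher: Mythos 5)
Your overall route is the same as the paper's: uniform partition of $[s,t]$, telescoping sum, second-order Taylor expansion, first-order Riemann sums converging to the $du$, $d\langle B\rangle$ and $dB$ integrals, the $\beta^i\beta^j(\Delta B)^2$ sum absorbed into the $d\langle B\rangle$ integral (the paper simply invokes ``the definition of the integration of $d\langle B\rangle_t$'', which rests on exactly the identity $(\Delta B)^2=\Delta\langle B\rangle+2\int(B_u-B_{t_k})dB_u$ that you write out), and the cross terms killed by the estimates $\Delta\langle B\rangle\le\bar\sigma^2\,\Delta t$ and $\hat{\mathbb{E}}[|\Delta B|^2]\le\bar\sigma^2\,\Delta t$, just as in the paper's Remark following the lemma.

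The one step that does not go through as written is your treatment of the Taylor remainder. You bound $|\sum_k R_k|$ by a product $\epsilon_N\cdot S_N$ with $\epsilon_N=\sup_k\sup_{t_k\le u\le t_{k+1}}|\partial^2\Phi(X_u)-\partial^2\Phi(X_{t_k})|\rightarrow 0$ quasi-surely and $\hat{\mathbb{E}}[S_N]$ bounded, and conclude that $\hat{\mathbb{E}}[|\sum_k R_k|]\rightarrow 0$. Under a sublinear expectation this inference is not available: quasi-sure convergence of a bounded sequence to $0$ does not imply convergence of $\hat{\mathbb{E}}=\sup_{P\in\mathcal{P}}E_P$ (there is no dominated convergence theorem here; the only monotone convergence at hand, Proposition \ref{pr8}, applies to continuous functions decreasing on all of $\Omega$), and a Cauchy--Schwarz workaround would require fourth-moment control of $S_N$ that you have not established. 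The fix is exactly what the paper does and is already contained in your hypotheses: $\partial^2_{x^ix^j}\Phi$ is Lipschitz, so $|R_k|\le c\,|X_{t_{k+1}}-X_{t_k}|^3$, whence
\[
\hat{\mathbb{E}}[|R_k|^2]\le c^2\,\hat{\mathbb{E}}[|X_{t_{k+1}}-X_{t_k}|^6]\le C(\delta^3+\delta^6),\qquad \delta=(t-s)/N,
\]
and therefore $\hat{\mathbb{E}}[|\sum_k R_k|^2]\le N\sum_k\hat{\mathbb{E}}[|R_k|^2]\le CN^2\delta^3\rightarrow 0$. Replace the uniform-continuity argument by this quantitative moment bound and your proof coincides with the paper's.
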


\begin{proof}
For each positive integer $N$ we set $\delta=(t-s)/N$ and take the partition
\[
\pi_{\lbrack s,t]}^{N}=\{t_{0}^{N},t_{1}^{N},\cdots,t_{N}^{N}\}=\{s,s+\delta
,\cdots,s+N\delta=t\}.
\]
We have
\begin{align}
\Phi(X_{t})-\Phi(X_{s}) &  =\sum_{k=0}^{N-1}[\Phi(X_{t_{k+1}^{N}}%
)-\Phi(X_{t_{k}^{N}})]\label{d-Ito}\\
&  =\sum_{k=0}^{N-1}\{ \partial_{x^{j}}\Phi(X_{t_{k}^{N}})(X_{t_{k+1}^{N}}%
^{j}-X_{t_{k}^{N}}^{j})\nonumber \\
&  +\frac{1}{2}[\partial_{x^{i}x^{j}}^{2}\Phi(X_{t_{k}^{N}})(X_{t_{k+1}^{N}%
}^{i}-X_{t_{k}^{N}}^{i})(X_{t_{k+1}^{N}}^{j}-X_{t_{k}^{N}}^{j})+\eta_{k}%
^{N}]\},\nonumber
\end{align}
where
\[
\eta_{k}^{N}=[\partial_{x^{i}x^{j}}^{2}\Phi(X_{t_{k}^{N}}+\theta
_{k}(X_{t_{k+1}^{N}}-X_{t_{k}^{N}}))-\partial_{x^{i}x^{j}}^{2}\Phi
(X_{t_{k}^{N}})](X_{t_{k+1}^{N}}^{i}-X_{t_{k}^{N}}^{i})(X_{t_{k+1}^{N}}%
^{j}-X_{t_{k}^{N}}^{j})
\]
with $\theta_{k}\in \lbrack0,1]$. We have%
\begin{align*}
\mathbb{\hat{E}}[|\eta_{k}^{N}|^{2}] &  =\mathbb{\hat{E}}[|[\partial
_{x^{i}x^{j}}^{2}\Phi(X_{t_{k}^{N}}+\theta_{k}(X_{t_{k+1}^{N}}-X_{t_{k}^{N}%
}))-\partial_{x^{i}x^{j}}^{2}\Phi(X_{t_{k}^{N}})]\\
&  \times(X_{t_{k+1}^{N}}^{i}-X_{t_{k}^{N}}^{i})(X_{t_{k+1}^{N}}^{j}%
-X_{t_{k}^{N}}^{j})|^{2}]\\
&  \leq c\mathbb{\hat{E}[}|X_{t_{k+1}^{N}}-X_{t_{k}^{N}}|^{6}]\leq
C[\delta^{6}+\delta^{3}],
\end{align*}
where $c$ is the Lipschitz constant of $\{ \partial_{x^{i}x^{j}}^{2}%
\Phi \}_{i,j=1}^{n}${ }and $C$ is a constant independent of $k${{{.}}} Thus
\[
\mathbb{\hat{E}}[|\sum_{k=0}^{N-1}\eta_{k}^{N}|^{2}]\leq N\sum_{k=0}%
^{N-1}\mathbb{\hat{E}}[|\eta_{k}^{N}|^{2}]\rightarrow0.
\]
The rest terms in the summation of the right side of (\ref{d-Ito}) are
$\xi_{t}^{N}+\zeta_{t}^{N}$ with%
\begin{align*}
\xi_{t}^{N} &  =\sum_{k=0}^{N-1}\{ \partial_{x^{j}}\Phi(X_{t_{k}^{N}}%
)[\alpha^{j}(t_{k+1}^{N}-t_{k}^{N})+\eta^{j}(\left \langle B\right \rangle
_{t_{k+1}^{N}}-\left \langle B\right \rangle _{t_{k}^{N}})\\
&  +\beta^{j}(B_{t_{k+1}^{N}}-B_{t_{k}^{N}})]+\frac{1}{2}\partial_{x^{i}x^{j}%
}^{2}\Phi(X_{t_{k}^{N}})\beta^{i}\beta^{j}(B_{t_{k+1}^{N}}-B_{t_{k}^{N}}%
)^{2}\}
\end{align*}
and
\begin{align*}
\zeta_{t}^{N} &  =\frac{1}{2}\sum_{k=0}^{N-1}\partial_{x^{i}x^{j}}^{2}%
\Phi(X_{t_{k}^{N}})\{[\alpha^{i}(t_{k+1}^{N}-t_{k}^{N})+\eta^{i}(\left \langle
B\right \rangle _{t_{k+1}^{N}}-\left \langle B\right \rangle _{t_{k}^{N}})]\\
&  \times \lbrack \alpha^{j}(t_{k+1}^{N}-t_{k}^{N})+\eta^{j}(\left \langle
B\right \rangle _{t_{k+1}^{N}}-\left \langle B\right \rangle _{t_{k}^{N}})]\\
&  +2[\alpha^{i}(t_{k+1}^{N}-t_{k}^{N})+\eta^{i}(\left \langle B\right \rangle
_{t_{k+1}^{N}}-\left \langle B\right \rangle _{t_{k}^{N}})]\beta^{j}%
(B_{t_{k+1}^{N}}-B_{t_{k}^{N}})\}.
\end{align*}
We observe that, for each $u\in \lbrack t_{k}^{N},t_{k+1}^{N})$
\begin{align*}
&  \mathbb{\hat{E}}[|\partial_{x^{j}}\Phi(X_{u})-\sum_{k=0}^{N-1}%
\partial_{x^{j}}\Phi(X_{t_{k}^{N}})\mathbf{I}_{[t_{k}^{N},t_{k+1}^{N}%
)}(u)|^{2}]\\
&  =\mathbb{\hat{E}}[|\partial_{x^{j}}\Phi(X_{u})-\partial_{x^{j}}%
\Phi(X_{t_{k}^{N}})|^{2}]\\
&  \leq c^{2}\mathbb{\hat{E}}[|X_{u}-X_{t_{k}^{N}}|^{2}]\leq C[\delta
+\delta^{2}],
\end{align*}
{{{where $c$ is the Lipschitz constant of }}}$\{ \partial_{x^{j}}\Phi
\}_{j=1}^{n}$ and $C$ is a constant independent of $k$. Thus $\sum_{k=0}%
^{N-1}\partial_{x^{j}}\Phi(X_{t_{k}^{N}})\mathbf{I}_{[t_{k}^{N},t_{k+1}^{N}%
)}(\cdot)$ tends to $\partial_{x^{j}}\Phi(X_{\cdot})$ in $M_{\ast}^{2}(0,T)$.
Similarly,
\[
\sum_{k=0}^{N-1}\partial_{x^{i}x^{j}}^{2}\Phi(X_{t_{k}^{N}})\mathbf{I}%
_{[t_{k}^{N},t_{k+1}^{N})}(\cdot)\rightarrow \partial_{x^{i}x^{j}}^{2}%
\Phi(X_{\cdot})\text{ in \ }M_{\ast}^{2}(0,T).
\]
Let $N\rightarrow \infty$, from Lemma \ref{Lemm4.6}, Proposition \ref{Prop4.7}
and Corollary \ref{Coro4.8} as well as the definitions of the integrations of
$dt$, $dB_{t}$ and $d\left \langle B\right \rangle _{t}$ the limit of $\xi
_{t}^{N}$ in $L_{\ast}^{2}(\Omega_{t})$ is just the right hand side of
(\ref{d-B-Ito}). By the next Remark we also have $\zeta_{t}^{N}\rightarrow0$
in $L_{\ast}^{2}(\Omega_{t})$. We then have proved (\ref{d-B-Ito}).
\end{proof}

\begin{remark}
In the proof of $\zeta_{t}^{N}\rightarrow0$ in $L_{\ast}^{2}(\Omega_{t})$, we
use the following estimates: for $\psi^{N}\in M_{b,0}(0,T)$ with $\psi_{t}%
^{N}=\sum_{k=0}^{N-1}\xi_{t_{k}}^{N}\mathbf{I}_{[t_{k}^{N},t_{k+1}^{N})}(t)$,
and $\pi_{T}^{N}=\{t_{0}^{N},\cdots,t_{N}^{N}\}$ such that $\lim
_{N\rightarrow \infty}\mu(\pi_{T}^{N})=0$ and $\mathbb{\hat{E}}[\sum
_{k=0}^{N-1}|\xi_{t_{k}}^{N}|^{2}(t_{k+1}^{N}-t_{k}^{N})]\leq C$, for all
$N=1,2,\cdots$, we have $\mathbb{\hat{E}}[|\sum_{k=0}^{N-1}\xi_{k}^{N}%
(t_{k+1}^{N}-t_{k}^{N})^{2}|^{2}]\rightarrow0$, and%
\begin{align*}
\mathbb{\hat{E}}[|\sum_{k=0}^{N-1}\xi_{k}^{N}(\left \langle B\right \rangle
_{t_{k+1}^{N}}-\left \langle B\right \rangle _{t_{k}^{N}})^{2}|^{2}]  &  \leq
C\mathbb{\hat{E}}[\sum_{k=0}^{N-1}|\xi_{k}^{N}|^{2}(\left \langle
B\right \rangle _{t_{k+1}^{N}}-\left \langle B\right \rangle _{t_{k}^{N}})^{3}]\\
&  \leq C\mathbb{\hat{E}}[\sum_{k=0}^{N-1}|\xi_{k}^{N}|^{2}\bar{\sigma}%
^{6}(t_{k+1}^{N}-t_{k}^{N})^{3}]\rightarrow0,
\end{align*}%
\begin{align*}
&  \mathbb{\hat{E}}[|\sum_{k=0}^{N-1}\xi_{k}^{N}(\left \langle B\right \rangle
_{t_{k+1}^{N}}-\left \langle B\right \rangle _{t_{k}^{N}})(t_{k+1}^{N}-t_{k}%
^{N})|^{2}]\\
&  \leq C\mathbb{\hat{E}}[\sum_{k=0}^{N-1}|\xi_{k}^{N}|^{2}(t_{k+1}^{N}%
-t_{k}^{N})(\left \langle B\right \rangle _{t_{k+1}^{N}}-\left \langle
B\right \rangle _{t_{k}^{N}})^{2}]\\
&  \leq C\mathbb{\hat{E}}[\sum_{k=0}^{N-1}|\xi_{k}^{N}|^{2}\bar{\sigma}%
^{4}(t_{k+1}^{N}-t_{k}^{N})^{3}\rightarrow0,
\end{align*}
as well as
\begin{align*}
\mathbb{\hat{E}}[|\sum_{k=0}^{N-1}\xi_{k}^{N}(t_{k+1}^{N}-t_{k}^{N}%
)(B_{t_{k+1}^{N}}-B_{t_{k}^{N}})|^{2}]  &  \leq C\mathbb{\hat{E}}[\sum
_{k=0}^{N-1}|\xi_{k}^{N}|^{2}(t_{k+1}^{N}-t_{k}^{N})|B_{t_{k+1}^{N}}%
-B_{t_{k}^{N}}|^{2}]\\
&  \leq C\mathbb{\hat{E}}[\sum_{k=0}^{N-1}|\xi_{k}^{N}|^{2}\bar{\sigma}%
^{2}(t_{k+1}^{N}-t_{k}^{N})^{2}]\rightarrow0\
\end{align*}
and%
\begin{align*}
&  \mathbb{\hat{E}}[|\sum_{k=0}^{N-1}\xi_{k}^{N}(\left \langle B\right \rangle
_{t_{k+1}^{N}}-\left \langle B\right \rangle _{t_{k}^{N}})(B_{t_{k+1}^{N}%
}-B_{t_{k}^{N}})|^{2}]\\
&  \leq C\mathbb{\hat{E}}[\sum_{k=0}^{N-1}|\xi_{k}^{N}|^{2}(\left \langle
B\right \rangle _{t_{k+1}^{N}}-\left \langle B\right \rangle _{t_{k}^{N}%
})|B_{t_{k+1}^{N}}-B_{t_{k}^{N}}|^{2}]\\
&  \leq C\mathbb{\hat{E}}[\sum_{k=0}^{N-1}|\xi_{k}^{N}|^{2}\bar{\sigma}%
^{4}(t_{k+1}^{N}-t_{k}^{N})^{2}]\rightarrow0.
\end{align*}
\endproof

\end{remark}

{ { { We now give the proof of Lemma \ref{lemma1}:}}}

\begin{proof}
[Proof of Lemma 5.1]Let $\Phi \in C^{2}(\mathbb{R}^{n})$ with $\partial_{x^{j}%
}\Phi,\  \partial_{x^{i}x^{j}}^{2}\Phi \in C_{b.Lip}(\mathbb{R}^{n})$ for
$i,j=1,\cdots,n$. Let $\alpha^{j}$, $\beta^{j}$ and $\eta^{j}$, $j=1,\cdots
,n$, be bounded processes in $M_{\ast}^{2}(0,T)$. We need to prove that%
\begin{align}
\Phi(X_{t})-\Phi(X_{s}) &  =\int_{s}^{t}\partial_{x^{j}}\Phi(X_{u})\beta
_{u}^{j}dB_{u}+\int_{s}^{t}\partial_{x^{j}}\Phi(X_{u})\alpha_{u}%
^{j}du\label{d-Ito-form1}\\
&  +\int_{s}^{t}[\partial_{x^{j}}\Phi(X_{u})\eta_{u}^{j}+\frac{1}{2}%
\partial_{x^{i}x^{j}}^{2}\Phi(X_{u})\beta_{u}^{i}\beta_{u}^{j}]d\left \langle
B\right \rangle _{u}.\nonumber
\end{align}
We first consider the case where $\alpha$, $\eta$ and $\beta$ are step
processes of the form%
\[
\eta_{t}(\omega)=\sum_{k=0}^{N-1}\xi_{k}(\omega)\mathbf{I}_{[t_{k},t_{k+1}%
)}(t).
\]
From the above Lemma, it is clear that (\ref{d-Ito-form1}) holds true. Now
let
\[
X_{t}^{j,N}=X_{0}^{j}+\int_{0}^{t}\alpha_{s}^{j,N}ds+\int_{0}^{t}\eta
_{s}^{j,N}d\left \langle B\right \rangle _{s}+\int_{0}^{t}\beta_{s}^{j,N}dB_{s},
\]
where $\alpha^{N}$, $\eta^{N}$ and $\beta^{N}$ are uniformly bounded step
processes that converge to $\alpha$, $\eta$ and $\beta$ in $M_{\ast}^{2}(0,T)$
as $N\rightarrow \infty$. From Lemma \ref{d-Lem-26}%
\begin{align}
\Phi(X_{t}^{N})-\Phi(X_{s}^{N}) &  =\int_{s}^{t}\partial_{x^{j}}\Phi(X_{u}%
^{N})\beta_{u}^{j,N}dB_{u}+\int_{s}^{t}\partial_{x^{j}}\Phi(X_{u}^{N}%
)\alpha_{u}^{j,N}du\label{d-N-Ito}\\
&  +\int_{s}^{t}[\partial_{x^{j}}\Phi(X_{u}^{N})\eta_{u}^{j,N}+\frac{1}%
{2}\partial_{x^{i}x^{j}}^{2}\Phi(X_{u}^{N})\beta_{u}^{i,N}\beta_{u}%
^{j,N}]d\left \langle B\right \rangle _{u}.\nonumber
\end{align}
Since%
\begin{align*}
&  \mathbb{\hat{E}[}|X_{t}^{j,N}-X_{t}^{j}|^{2}]\\
&  \leq C\mathbb{\hat{E}[}\int_{0}^{T}[(\alpha_{s}^{j,N}-\alpha_{s}^{j}%
)^{2}+|\eta_{s}^{j,N}-\eta_{s}^{j}|^{2}+|\beta_{s}^{j,N}-\beta_{s}^{j}%
|^{2}]ds],
\end{align*}
where $C$ is a constant independent of $N$. We then can prove that, in
$M_{\ast}^{2}(0,T)$,
\begin{align*}
\partial_{x^{j}}\Phi(X_{\cdot}^{N})\eta_{\cdot}^{j,N} &  \rightarrow
\partial_{x^{j}}\Phi(X_{\cdot})\eta_{\cdot}^{j},\\
\partial_{x^{i}x^{j}}^{2}\Phi(X_{\cdot}^{N})\beta_{\cdot}^{i,N}\beta_{\cdot
}^{j,N} &  \rightarrow \partial_{x^{i}x^{j}}^{2}\Phi(X_{\cdot})\beta_{\cdot
}^{i}\beta_{\cdot}^{j},\\
\partial_{x^{j}}\Phi(X_{\cdot}^{N})\alpha_{\cdot}^{j,N} &  \rightarrow
\partial_{x^{j}}\Phi(X_{\cdot})\alpha_{\cdot}^{j},\\
\partial_{x^{j}}\Phi(X_{\cdot}^{N})\beta_{\cdot}^{j,N} &  \rightarrow
\partial_{x^{j}}\Phi(X_{\cdot})\beta_{\cdot}^{j}.
\end{align*}
We then can pass limit in both sides of (\ref{d-N-Ito}) to get
(\ref{d-Ito-form1}).
\end{proof}

\end{document}